\newcommand{\ti}{\;\,\makebox[0pt]{$\mid$}\makebox[0pt]{$\cap$}\,\;}
\newtheorem{theorem}{Theorem}[section]
\newtheorem{corollary}{Corollary}
\newtheorem{lemma}[theorem]{Lemma}
\newtheorem{proposition}{Proposition}
\newtheorem*{problem}{Problem}
\theoremstyle{definition}
\newtheorem{definition}[theorem]{Definition}
\newtheorem{remark}{Remark}
\title{On the indices of periodic points in $C^1$-generic wild homoclinic classes in dimension three}
\author{Katsutoshi Shinohara}
\date{}
\begin{document}

\maketitle

\begin{abstract}
We study the dynamics of homoclinic classes on three dimensional manifolds 
under the robust absence of dominated splittings.
We prove that if such a homoclinic class contains a volume-expanding periodic point, 
then, $C^1$-generically, it contains a hyperbolic periodic point 
whose index (dimension of the unstable manifold) is equal to two.   
\end{abstract}

\setcounter{section}{-1}
\section{Notations}

In this section, we collect some basic definitions which we frequently use 
throughout this article.

\subsection{Some basic terminologies}
We consider a closed (compact and boundaryless) smooth manifold $M$
with a Riemaniann metric. 
In this article, we mainly treat the case where $\mathrm{dim}M=3$.
The space of $C^1$-diffeomorphisms of $M$ is denoted by $\mathrm{Diff}^1(M)$.
We fix a distance function on $\mathrm{Diff}^1(M)$ derives from the Riemaniann metric and
furnish $\mathrm{Diff}^1(M)$ with the $C^1$-topology.
For $f \in \mathrm{Diff}^1(M)$, we denote 
the set of periodic points of $f$ by $\mathrm{Per}(f)$
and the set of hyperbolic periodic points of $f$ by $\mathrm{Per}_h(f)$.
For $P \in \mathrm{Per}(f)$, by $\mathrm{per}(P)$ we denote the period of $P$, 
i.e., the least positive integer $k$ that satisfies $f^k(P) =P$.
For $x \in M$ we denote the orbit of $x$ by $\mathcal{O}(x, f)$
or simply $\mathcal{O}(x)$.
We put $J(P) := \det(df^{\mathrm{per}(P)}(P))$ and 
call this value {\it Jacobian} of $P$.
We say that a periodic point is {\it volume-expanding} (resp. 
{\it volume-contracting, conservative}) if $|J(P)|>1$ 
(resp. $|J(P)|<1$, $|J(P)|=1$). 

In the following, we assume $P$ is a hyperbolic periodic point.
The index of a hyperbolic periodic point $P$ (denoted by $\mathrm{ind}(P)$)
is defined to be the dimension of the unstable manifold of $P$.
By $W^s(P, f)$ (resp. $W^u(P, f)$) we denote 
the stable (resp. unstable) manifold of $P$. 
We also use the simplified notation $W^s(P)$ (resp. $W^u(P)$). 
By $H(P, f)$, or simply $H(P)$, we denote the homoclinic class of $P$, i.e., the closure of the set of 
points of transversal intersections of $W^u(P)$ and $W^s(P)$.
Two hyperbolic periodic points $P$ and $Q$ are said to be {\it homoclinically related}
if $W^u(P)$ and $W^s(Q)$, $W^u(Q)$ and $W^s(P)$  both have non-empty
transversal intersections.
If $g$ is a $C^1$-diffeomorphism sufficiently close to $f$,
then one can define the continuation of $P$.
We denote the continuation of $P$ for $g$ by $P(g)$.
We say that $P$ has a {\it homoclinic tangency} 
if there exitsts a point of non-transversal intersection
between stable manifold and unstable manifold of $P$,
i.e., there exists a point $x \in W^s(P) \cap W^u(P)$ at which $T_xW^s(P)$ and
$T_xW^s(P)$ do not span $T_xM$.

\subsection{Linear cocycles and dominated splittings}\label{ddomi}
Let $(\Sigma, f, E, A)$ be a {\it linear cocycle}, 
where $\Sigma$ is a topological space, $f$ is a homeomorphism 
of $\Sigma$, $E$ is a Riemaniann vector bundle over $\Sigma$ and
$A$ is a bundle map that is compatible with $f$, i.e., 
$A$ is a map $A \colon E \to E$, where for each $x \in \Sigma$,
$A(x, \, \cdot \, ) $ is the linear isomorphisms from $E(x)$ to $E(f(x))$.  
We also use the notation $A(x)$ in the sense of $A(x, \, \cdot \,)$
and denote the linear cocycle only with $E$ or $A$
when the meaning is clear from the context.
In our application, we mainly treat linear cocycles where $\Sigma$
is some invariant set of $M$, $f$ is the restriction of a diffeomorphism to $\Sigma$, 
$E$ is the restriction of the tangent bundle to $\Sigma$, 
and $A$ is the differential of $f$ restricted to $E$. 
We can naturally define the $n$-times iteration of $A$, denoted by $A^n$,
and inverse of $A$, denoted by $A^{-1}$. 
We say that a linear cocycle $(\Sigma, f, E, A)$ is {\it periodic} if each point 
$x \in \Sigma$ is periodic for $f$.
A periodic linear cocycle is said to be {\it diagonalizable} if 
at each point, the return map (the composition of corresponding 
linear maps along the orbit) is diagonalizable.

On each fiber, there is a norm that derives from the Riemaniann metric.
We denote it by $|| \,\, \cdot \,\, ||$.
Then, a linear cocycle is said to be {\it bounded by $K >0$} if the following inequality holds:
\[
\max \left\{ \sup_{x \in \Sigma} ||A(x)||, \,\, \sup_{x \in \Sigma} ||A^{-1}(x)|| \right\} <K.
\] 
When $A$ is the restriction of the differential of a diffeomorphism on
a compact manifold, 
$A$ is bounded by some constant.
For a linear cocycle, we can canonically define invariant subcocycle,
direct sum between some cocycles, and quotient of cocycles 
(for the details, see section 1 of \cite{BDP}).

Let $(\Sigma, f, E, A)$  be a linear cocycle and suppose 
$E$ is a direct sum of 
two non-trivial 
invariant subbundles $(\Sigma, f, F, A|_{F})$ and $(\Sigma, f, G, A|_{G})$
(denoted by $E= F\oplus G$), 
where $A|_{F}$ is the restriction of $A$ to $F$. 
For a positive integer $n$, 
we say that $F\oplus G$ is an {\it $n$-dominated splitting} if following holds:
\[
\| A^n(x)|_{F} \| \|A^{-n}(f^{n}(x))|_{G} \| < 1/2, \,\,\, \mbox{for all} \,\,\,  x\in \Sigma.
\]
We say that a linear cocycle $(\Sigma, f, E, A)$ admits a dominated splitting 
if there exists two invariant subbundles $F$, $G$ of $E$ and an integer $n$ 
such that $E= F \oplus G$ is an $n$-dominated splitting.
We say that an $f$-invariant set $\Lambda \subset M$ admits a dominated splitting if 
the linear cocycle $(\Lambda, f, TM|_{\Lambda}, df)$ admits a dominated splitting.

\subsection{Chain reccurence class and robust cycles}
Let $\varepsilon$ be a positive real number, $x, y \in M$ and
$d( \, \cdot \, , \, \cdot \, )$ be a metric on $M$.
An {\it $\varepsilon$-chain from $x$ to $y$} 
for $f \in \mathrm{Diff}^1(M)$ 
is a sequence $(x_i)_{i=1}^n$ ($n \geq 2$) in $M$ satisfying
$d( f(x_i) , x_{i+1} ) < \varepsilon$ for $1\leq  i < n$, 
$x_1 =x$ and $x_n =y$. 
Two points $x, y$ are said to be {\it chain equivalent}
if for any $\varepsilon >0$ there exist an $\varepsilon$-chain 
from $x$ to $y$ and an  $\varepsilon$-chain from $y$ to $x$.
A point $x \in M$ is said to be a {\it chain recurrent point}
if $x$ is chain equivalent to itself.
For a chain recurrent point $x$, its {\it chain recurrence class} of $x$
is defined to be the set of the points that is chain equivalent to 
$x$. 

Let $\Gamma$ and $\Sigma$ be two transitive hyperbolic
invariant sets for $f$.  We say that $f$ $\Gamma$ and $\Sigma$ have a {\it heterodimensional cycle}
if the following holds:
\begin{enumerate}
\item The indices (the dimension of the unstable manifolds) of the sets $\Gamma$ and $\Sigma$ are different.
\item The stable manifold of $\Gamma$ meets the unstable manifold of $\Sigma$ and 
the same holds for stable manifold of $\Sigma$ and the unstable manifold of $\Gamma$.
\end{enumerate}
We say that the heterodimensional cycle associated to $\Gamma$ and $\Sigma$ is $C^1$-robust
if there exists a $C^1$-neighborhood $\mathcal{U}$ of $f$ such that for each $g \in \mathcal{U}$
there exists a heterodimensional cycle associated to
the continuations $\Gamma(g)$ of $\Gamma$ and $\Sigma(g)$ of $\Sigma$.

\section{Introduction}

Let $M$ be a compact smooth manifold without boundary.
For $f \in \mathrm{Diff}^1(M)$ and $P \in \mathrm{Per}_h(f)$, 
its {\it homoclinic class}, denoted by $H(P, f)$ (or $H(P)$),
is defined to be the closure of the set of the points 
of the transversal intersection between 
the stable manifold and the unstable manifold of $P$. 
The theory of Smale's generalized horseshoe tells us that $H(P)$ coincides with 
the closure of the set of 
hyperbolic periodic points that are homoclinically related to $P$.
In the study of uniformly hyperbolic systems, homoclinic classes play an important role
and it is expected that they also play an important role in the research of non-hyperbolic dynamics (see chapter 10 of \cite{BDV}). 

In the Axiom A diffeomorphisms, every homoclinic class exhibits 
uniformly hyperbolic structure. That enables us to investigate the fine internal structure 
of dynamics of homoclinic classes. However, in the non-uniformly hyperbolic
systems, homoclinic classes do not necessarily exhibit uniform hyperbolicities.
This fact makes the study of non-hyperbolic dynamics difficult.

Even in the non-uniformly hyperbolic systems, a homoclinic class
may exhibit weak form of hyperbolicity, such as partial hyperbolicity
(see \cite{BDV} for definition) or dominated splitting 
(see section \ref{ddomi} for definition). 
On the other hand, there do exist homoclinic classes that do not exhibit 
any kind of hyperbolicities in a robust fashion,
and there are several indications that 
such kind of absence of hyperbolicities implies the complexities of the dynamics.  
For example, 
Bonatti, D\'{i}az, and Pujals \cite{BDP} proved that the robust absence of 
the dominated splitting on a homoclinic class implies the $C^1$-Newhouse phenomenon,
i.e., locally generic coexistence of infinitely many sinks or sources.
Furthermore, in \cite{BD}, Bonatti and D\'{i}az showed, under the robust absence of dominated splitting 
and some conditions on the Jacobians, 
a homoclinic class exhibits very complicated 
dynamics named {\it universal dynamics}.

Thus, the following questions are interesting:
{\it What are the effects that the absence of 
dominated splitting on a homoclinic class gives rise to?} 
Or, {\it how the existence of the dominated splitting 
on a homoclinic class is disturbed? }
There are some results in this direction. For example,
Gourmelon \cite{GouH} proved that under the absence of dominated splittings
on a homoclinic class, one can create a homoclinic tangency inside the homoclinic class. 
Note that Wen \cite{W} also proved similar result starting from preperiodic points.
The result of Gan says 
that the existence of dominated splitting of index $i$ on preperiodic points
is equivalent to the existence of $i$-eigenvalue gap (see \cite{Gan} for precise definition).
In this article, inspired by Abdenur, et al. (\cite{ABCDW}), 
we investigate the {\it index set} of homoclinic classes that do not 
admit dominated splittings from the $C^1$-generic viewpoint.

To review the result of \cite{ABCDW}, we prepare one notation.
Given $f \in \mathrm{Diff}^1(M)$ and $P \in \mathrm{Per}_h(f)$,
the {\it index set} of $H(P, f)$ (denoted by $\mathrm{ind}(H(P, f))$ 
is defined to be the set of integers that
appear as a index of some periodic points in $H(P, f)$,
i.e., we put
\[
\mathrm{ind}(H(P, f)) := \{ k \in \mathbb{N} \mid \exists Q \in 
\mathrm{Per}_h(f) \cap H(P, f), \,\, \mathrm{ind}(Q) = k \}.
\]
In \cite{ABCDW}, it was proved that for $C^1$-generic diffeomorphism,
every homoclinic class has index set which is an interval in $\mathbb{N}$.
The problem we pursue in this article is the following:
{\it Does the robust absence of dominated splittings give some 
restrictions on its index set?}

Let us state our main result.
For $f \in \mathrm{Diff}^1(M)$ and $P \in \mathrm{Per}_h(f)$, 
$H(P)$ is is said to be {\it wild} if it is robustly non-dominated,
more precisely, there exists a neighborhood 
$\mathcal{U} \subset \mathrm{Diff}^1(M)$ of $f$ such that 
for every $g\in \mathcal{U}$, the continuation $P(g)$ is defined and
$H(P, g)$ does not admit any kind of dominated splittings.

Here is a partial answer to this question.
\begin{theorem}\label{mainth}
For $C^1$-generic diffeomorphisms of a three-dimensional closed smooth manifold,
if there exists a wild homoclinic class $H(P)$ that contains an index-one 
volume-expanding 
hyperbolic periodic point, then $2 \in \mathrm{ind}(H(P))$.
\end{theorem}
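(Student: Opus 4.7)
The plan is to produce, by an arbitrarily small $C^1$-perturbation, a source (i.e.\ a periodic point of index $3$) inside $H(P)$; once this is achieved, the interval property of $C^1$-generic index sets from \cite{ABCDW}, combined with the given index-$1$ periodic point, immediately forces $2\in \mathrm{ind}(H(P))$.

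I work inside a $C^1$-residual set $\mathcal R\subset \mathrm{Diff}^1(M)$ on which the usual generic properties hold: every periodic point in $H(P)$ is hyperbolic and homoclinically related to $P$, continuations are well defined, the Kupka--Smale property holds, and the index set of every homoclinic class is an interval in $\mathbb N$ (this last point being the content of \cite{ABCDW}). After replacing $P$ by the prescribed index-$1$ volume-expanding point, I may assume $P$ itself has eigenvalues $\lambda^u,\lambda^s_1,\lambda^s_2$ with $|\lambda^u|>1$, $|\lambda^s_i|<1$, and $|J(P)|=|\lambda^u\lambda^s_1\lambda^s_2|>1$. Using a fixed transverse homoclinic orbit of $P$ and a standard horseshoe-type construction, I build, for each large $n$, a hyperbolic periodic point $Q_n$ homoclinically related to $P$ whose orbit shadows that of $P$ during $n$ consecutive turns, with a bounded homoclinic excursion in between. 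Its period is approximately $n\cdot \mathrm{per}(P)$, and a direct estimate on the derivative along the shadowing segments yields $|J(Q_n)|\geq \tfrac 12|J(P)|^n$; in particular $|J(Q_n)|>1$ for every sufficiently large $n$.

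The heart of the argument is to apply the perturbation technique of Bonatti--D\'iaz--Pujals \cite{BDP} to the three-dimensional tangent cocycle along the orbit of $Q_n$. The wildness of $H(P)$ says that no dominated splitting exists on the class and that this persists under $C^1$-perturbations; for $n$ large enough this failure of domination is inherited in a sufficiently quantitative form by the cocycle over $Q_n$, and BDP then produces an arbitrarily small $C^1$-perturbation $g$ of $f$ such that the return derivative $Dg^{\mathrm{per}(Q_n)}$ at the continuation $Q_n(g)$ is $\varepsilon$-close to a homothety $\mu\cdot\mathrm{Id}$. Since the BDP scheme acts on the cocycle by rotations that preserve the total determinant, one has $\mu^3 = J(Q_n(g))$, and the estimate $|J(Q_n)|>1$ forces $|\mu|>1$; thus $Q_n(g)$ becomes a source, i.e.\ a periodic point of index $3$. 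A connecting-lemma argument (in the spirit of Bonatti--Crovisier--Gourmelon) ensures that this perturbation can be set up so as to preserve the transverse homoclinic intersections used in the construction of $Q_n$, so that $Q_n(g)\in H(P(g),g)$.

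Combining the continuation of $P$ (still of index $1$) with the newly produced index-$3$ source $Q_n(g)$ inside $H(P(g),g)$, the interval property forces $2\in \mathrm{ind}(H(P(g),g))$; a standard density-plus-$G_\delta$ genericity argument then promotes this to the conclusion that $2\in \mathrm{ind}(H(P,f))$ for every $f$ in a residual subset of $\mathrm{Diff}^1(M)$. The principal technical obstacle is the BDP step: one must verify that the robust non-domination on $H(P)$ transfers to a uniform lack of domination on the orbits of $Q_n$ for large $n$, that the BDP perturbation is set up so as to preserve the total determinant and keep $|\mu|>1$, and that it can be coupled with the connecting-lemma argument so that the new source remains inside the homoclinic class of $P(g)$. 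Everything else is a fairly routine application of standard $C^1$-generic machinery.
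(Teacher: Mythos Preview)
Your plan has a structural flaw at its very first step: a hyperbolic source cannot lie in the homoclinic class of a saddle. If $Q$ is a source, then $f^{-1}$ is a uniform contraction on a neighborhood of $\mathcal O(Q)$, and the standard trapping-region argument shows that every sufficiently fine $\varepsilon$-chain ending at $Q$ stays in that neighborhood; hence the chain recurrence class of $Q$ is exactly $\mathcal O(Q)$. Since $H(P)$ is contained in the chain recurrence class of $P$, a source can never belong to $H(P)$ when $P$ is a genuine saddle. So the sentence ``produce \ldots\ a source inside $H(P)$'' is asking for something impossible, and the appeal to the interval property of \cite{ABCDW} never gets off the ground.

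The step you label as routine --- ``a connecting-lemma argument ensures \ldots\ $Q_n(g)\in H(P(g),g)$'' --- is precisely the hard part, and in fact it fails in the form you state it. Gourmelon's Franks' lemma preserves invariant manifolds only along a path of cocycles that remains hyperbolic \emph{with constant index}; once the index of $Q_n$ changes, its stable manifold collapses and the homoclinic connections you are trying to protect disappear. The paper's introduction says this explicitly: the BDP mechanism creates an index bifurcation by a small perturbation, but ``this argument does not tell us whether the bifurcation happens inside the homoclinic class or not.'' The whole point of the paper's route --- creating a \emph{homothetic tangency} inside the class (Proposition~\ref{pdegtan}), then unfolding it to produce a heterodimensional cycle between $P$ and a new index-$2$ saddle $R$ (Proposition~\ref{pbiftan}), then stabilizing that cycle via \cite{BDKS} --- is to manufacture a \emph{recurrence} linking the new index-$2$ point back to $P$, so that for generic $g$ (via $\mathcal R_2$ and $\mathcal R_3$) one gets $R\in H(P)$. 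Your proposal bypasses exactly this recurrence construction, and that is the missing idea.
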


Let us see an immediate corollary of this theorem.
\begin{corollary}
For $C^1$-generic diffeomorphisms of a three-dimensional closed smooth manifold, 
if there exists a wild homoclinic class that contains two hyperbolic periodic points 
such tha one of them is volume-expanding and the other is volume-contracting.  
Then $\mathrm{ind}(H(P)) = \{1, 2 \}$.
\end{corollary}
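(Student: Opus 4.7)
The plan is to reduce the corollary to two applications of the Main Theorem: one to $f$ and one to $f^{-1}$. Since $\dim M = 3$, I would first observe that on a suitable residual set one has $\mathrm{ind}(H(P)) \subseteq \{1, 2\}$; indeed, on the residual set where chain recurrence classes coincide with homoclinic classes (Bonatti--Crovisier), sinks and sources are isolated chain recurrence classes and thus cannot belong to a nontrivial homoclinic class, and the wildness of $H(P)$ ensures nontriviality. It therefore suffices to show that both $1$ and $2$ belong to $\mathrm{ind}(H(P))$.

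The second ingredient is a time-reversed form of the Main Theorem, obtained by replacing $f$ with $f^{-1}$. Wildness is invariant under inversion: a dominated splitting $E = F \oplus G$ for $f$ is also dominated (as $E = G \oplus F$) for $f^{-1}$. For a periodic point $Q$ one has $J_{f^{-1}}(Q) = J_f(Q)^{-1}$ and $\mathrm{ind}_{f^{-1}}(Q) = 3 - \mathrm{ind}_f(Q)$, so ``volume-expanding index-$1$ for $f^{-1}$'' translates to ``volume-contracting index-$2$ for $f$''. Since $f \mapsto f^{-1}$ is a self-homeomorphism of $\mathrm{Diff}^1(M)$, intersecting the residual set of the Main Theorem with its image under this involution gives a residual set on which both versions apply simultaneously.

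With these preparations, I would do a case analysis on the indices of the given volume-expanding $Q_+$ and volume-contracting $Q_-$. If $\mathrm{ind}(Q_+) = 1$, then $1 \in \mathrm{ind}(H(P))$ is automatic and the Main Theorem provides $2 \in \mathrm{ind}(H(P))$. If $\mathrm{ind}(Q_-) = 2$, then $2 \in \mathrm{ind}(H(P))$ is automatic and the dual Main Theorem provides $1 \in \mathrm{ind}(H(P))$. The only remaining case, $\mathrm{ind}(Q_+) = 2$ and $\mathrm{ind}(Q_-) = 1$, yields both indices directly. Combined with the inclusion $\mathrm{ind}(H(P)) \subseteq \{1, 2\}$, this gives the claimed equality.

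The only nontrivial point is the correct formulation of the time-reversed Main Theorem, which is a purely formal consequence of the symmetries noted above rather than any new dynamical argument. Once that is in hand, the corollary reduces to bookkeeping of the two indices against the two volume conditions.
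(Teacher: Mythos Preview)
Your proposal is correct and matches the argument the paper implicitly has in mind: the corollary is stated without proof as an ``immediate'' consequence of Theorem~\ref{mainth}, and the natural derivation is precisely your symmetry argument under $f \mapsto f^{-1}$ combined with the observation that a wild (hence nontrivial) homoclinic class of a generic diffeomorphism cannot contain sinks or sources. The case analysis on $(\mathrm{ind}(Q_+), \mathrm{ind}(Q_-))$ is exactly what is needed to reduce to Theorem~\ref{mainth} and its time-reversed form.
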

Thus, under some assumptions on Jacobian, 
there does exist a restriction on the index of periodic points inside wild homoclinic classes.

This theorem can be interpreted as a qualification of homoclinic classes to be ``basic pieces.''
To explain this, let us review the intuitive idea of \cite{BDP}:
The wildness of a homoclinic class scatters its hyperbolicity to any direction.
Thus by using their technique, it is not difficult to prove that, under the wildness,  
one can create an index bifurcation by an arbitrarily small perturbation. 
However, this argument does not tell us whether 
the bifurcation happens inside the homoclinic class or not.
If homoclinic classes are to deserve as basic pieces, then it is desireble that 
a phenomenon which local (linear algebraic) information implies to happen
can be observed inside the original homoclinic classes.
Theorem \ref{mainth} says that the bifurcation problem we discussed above 
is ``well localized'' in homoclinic classes.

Let us reintroduce our theorem from a different viewpoint.
Aiming at the global understanding of $C^1$-dynamical systems,
Palis suggested the famous Palis conjecture (see \cite{P}), that is,
every ($C^1$-) diffeomorphism away from Axiom A and no-cycle diffeomorphisms
can be approximated by a diffeomorphism with heterodimensional cycle or homoclinic tangency.
Recently, Bonatti and D\'{i}az asked a stronger version of this conjecture:
\begin{problem}[Question 1.2 in \cite{BD2}]
Let $M$ be a smooth closed manifold. Does there exist a $C^1$-open and dense subset 
$\mathcal{O} \subset \mathrm{Diff}^1(M)$ such that every $f \in \mathcal{O}$
either verifies the Axiom A and the no-cycle condition or has a $C^1$-robust 
heterodimensional cycle.
\end{problem}

Our study gives a partial answer to this question.
In fact, we can prove the following:
\begin{theorem}\label{nPalis}
Let $f$ be a $C^1$-diffeomorphism of a three-dimensional closed smooth manifold.
If $f$ has a wild homoclinic class that contains an index-1 
volume-expanding hyperbolic periodic point,
then $f$ can be approximated by a diffeomorphism with a robust heterodimensional cycle.
\end{theorem}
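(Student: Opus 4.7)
The plan is to deduce Theorem~\ref{nPalis} from the Main Theorem by combining it with the standard mechanism for turning a heterodimensional cycle inside a robustly non-dominated class into a $C^1$-robust one.

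First I would reduce to the $C^1$-generic situation. The hypotheses on $f$---wildness of $H(P, f)$ and existence of an index-1 volume-expanding hyperbolic periodic point $P$---are both open in $\mathrm{Diff}^1(M)$: wildness is open by definition, and the index and the sign of $|J(P)|-1$ persist under continuation. So there is a $C^1$-neighborhood $\mathcal{U}$ of $f$ on which both conditions remain valid for the continuation $P(g)$. Intersecting $\mathcal{U}$ with the residual set provided by Theorem~\ref{mainth}, I pick $g \in \mathcal{U}$ arbitrarily $C^1$-close to $f$ such that $H(P(g), g)$ is wild, contains $P(g)$ of index 1 and volume-expanding, and also contains a hyperbolic periodic point $Q$ of index 2.

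Next I would produce a heterodimensional cycle between $P(g)$ and $Q$. Since both lie in the same homoclinic class, they lie in the same chain-recurrence class; applying the connecting lemma of Hayashi and its Bonatti-Crovisier refinement for $C^1$-generic diffeomorphisms, after a further arbitrarily small perturbation $g_1$ of $g$ I obtain non-empty intersections between $W^u(P(g_1))$ and $W^s(Q(g_1))$ and between $W^u(Q(g_1))$ and $W^s(P(g_1))$, i.e., a heterodimensional cycle between transitive hyperbolic sets containing the continuations of $P$ and $Q$.

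Finally, I would upgrade the cycle to a $C^1$-robust one. Because $H(P(g_1), g_1)$ remains wild---no dominated splitting of any index is present on the class---I invoke the blender-type construction of Bonatti-D\'{i}az (in the spirit of \cite{BDP}): a further small perturbation inside $\mathcal{U}$ turns one endpoint of the cycle (the index-2 one, say) into a cu-blender of the appropriate strong-stable dimension, and the characteristic ``open covering'' property of a blender then makes its intersection with the one-dimensional unstable manifold of the other endpoint $C^1$-persistent, producing the desired robust heterodimensional cycle. The main obstacle lies precisely in this last step, the production of the blender from the given wild cycle; however, the absence of dominated splittings is exactly the feature that the proof of Theorem~\ref{mainth} already exploits to manufacture new periodic indices by small perturbations, so one expects the blender and its activating region to fall out of the same construction with additional bookkeeping, rather than requiring a genuinely new argument.
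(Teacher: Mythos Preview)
Your approach inverts the paper's logical dependency. In the paper, Theorem~\ref{nPalis} is proved \emph{directly} from Propositions~\ref{pdegtan} and~\ref{pbiftan} together with Lemma~\ref{BDKS} (summarized as Proposition~\ref{pw-hdc}): one first perturbs to create a \emph{homothetic tangency} at a periodic point homoclinically related to $P$, then bifurcates this tangency to produce a heterodimensional cycle associated to $Q$ and an index-$2$ point $R$, and finally invokes the stabilization theorem of Bonatti--D\'{i}az--Kiriki (Lemma~\ref{BDKS}) to make the cycle robust. Theorem~\ref{mainth} is then \emph{derived} from Proposition~\ref{pw-hdc} and generic properties, not the other way around. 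So your plan to deduce Theorem~\ref{nPalis} from Theorem~\ref{mainth} is, in the paper's architecture, essentially circular: the machinery you would be invoking already contains Theorem~\ref{nPalis} as an immediate byproduct.

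Even treating Theorem~\ref{mainth} as a black box, your argument has a genuine gap in the final step. You correctly identify that going from ``there is an index-$2$ point $Q$ in $H(P)$'' to ``there is a robust heterodimensional cycle'' is the crux, but your proposal to manufacture a blender from wildness alone is not a proof; you yourself concede this is ``the main obstacle.'' The paper resolves exactly this step by citing the stabilization result of \cite{BDKS}: once a coindex-one heterodimensional cycle exists between saddles $Q$ and $R$ with at least one non-trivial homoclinic class, a $C^1$-small perturbation yields a robust cycle. Had you invoked that result (rather than \cite{BDP}, which concerns a different mechanism), your route would close---but at that point you are essentially reproducing the paper's argument with an unnecessary detour through Theorem~\ref{mainth} and the connecting lemma, instead of producing the cycle constructively via the homothetic-tangency bifurcation.
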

Note that Wen \cite{W} and Gourmelon \cite{GouH} already gave 
positive answers to the Palis conjecture under similar hypothesis.
The novelity of our results is that we can create a {\it connection between two saddles}.
Roughly speaking, outside Axiom A diffeomorphisms with no-cycles, 
by linear algebraic arguments and Franks' lemma,
it is not difficult to create an index bifurcation with an arbitrarily small perturbation.
On the other hand, in general it is difficult to create a cycle between two saddles,
since we need the information about the reccurence between two saddles. 
Our proof suggests one senario to create a connection between two saddles.

In this article, we confined our attention to three-dimensional cases.
Let us see what happens the other dimensions.
In dimension two, it is easy to determine $\mathrm{ind}(H(P))$. It is $\{ 0\}$ (sink), 
$\{ 2\}$ (source), or $\{ 1 \}$. Note that  
there is no example of wild homoclinic classes in dimension two (in $C^1$-topology). 
If one can construct such a homoclinic class,
it serves as a counter-example of the conjecture of Smale about the density of the
Axiom A and no-cycle condition diffeomorphisms in $C^1$-topology (see \cite{S}).
The study of higher dimensional cases are of natural interest.
So far, the complete solution of the higher dimensional cases are not obtained.
In \cite{Sh}, the author gave an example of generically wild homoclinic classes 
on four dimensional manifold whose index set exhibits an {\it index deficiency}
in a robust fashion (for the precise definition, see \cite{Sh}). 

The example of the homoclinic class that satisfies the assumption of the theorem
can be found in \cite{BD}. As far as I know, this is the only mechanism that assures the wildness of some homoclinic classes.
Thus it is interesting to study whether there is another mechanism that leads the creation of a wild homoclinic class.
For example, the following question is interesting to study:
\begin{problem}
Can one find $f \in \mathrm{Diff}^1(M)$ ($\mathrm{dim}(M) =3$) 
such that for some hyperbolic periodic point $P$, 
$H(P)$ is wild and robustly $\mathrm{ind}(H(P)) = \{ 1\}$?
\end{problem}

Finally, let us see the organization of this article. 
In section \ref{outline}, we introduce our strategy for the proofs of Theorem \ref{mainth} and \ref{nPalis}. 
We also furnish some part of their proof. The rest of the proofs are given
in section \ref{cretan} and section \ref{biftan}.
More explanation on the contents of section \ref{cretan} and 
section \ref{biftan} can be found at the end of the section \ref{outline}.

\section{Outline of the proof}\label{outline}

In this section, we see the strategy 
for the proof of Theorem \ref{mainth} and \ref{nPalis}.
First, we explain how we will prove Theorem \ref{nPalis}.
The proof is divided into two propositions. 

We say a hyperbolic periodic point $P$ has a {\it homothetic tangency}
if $P$ has a homoclinic tangency and the restrictions of $df^{\mathrm{per}(P)}$
to the stable spaces and unstable spaces are both homotheties
(a linear endomorphism of a linear space is said to be a 
{\it homothety} if it is equal to $r \mathrm{Id}$, where $r$ is a positive
real number and $\mathrm{Id}$ is the identity map).

Roughly speaking, the first proposition states that, under the 
robust absence of dominated splittings, one can create a 
homothetic tangency
inside the homoclinic class by an arbitrarily small perturbation. 
\begin{proposition}\label{pdegtan} 
Let $f \in \mathrm{Diff}^1(M)$ with $\dim M =3$ and
$P$ be a volume-expanding index-$1$ hyperbolic periodic point of $f$.
If $H(P)$ is wild then  one can find 
a $C^1$-diffeomorphism $g$ arbitrarily $C^1$-close to $f$ such that the following properties hold.
\begin{enumerate}
\item There exists a volume-expanding hyperbolic periodic point $Q$ of index $1$. 
\item Let $P(g)$ be the continuation of $P$. Then two periodic points $P(g)$ and $Q$ are homoclinically related.
\item $Q$ has a homothetic tangency.
\end{enumerate}
\end{proposition}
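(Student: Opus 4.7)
The plan is to combine three ingredients: (i) Gourmelon's construction of homoclinic tangencies inside robustly non-dominated homoclinic classes \cite{GouH}; (ii) the Bonatti--D\'iaz--Pujals machinery \cite{BDP} for perturbing periodic linear cocycles in the absence of dominated splittings; and (iii) Franks' lemma applied along a periodic orbit to install a prescribed linear return map. The target configuration is mild: since the index of $Q$ must be $1$ and $\dim M = 3$, the unstable part $df^{\mathrm{per}(Q)}|_{E^u}$ is a $1 \times 1$ matrix and hence automatically a homothety, so the entire content of the homothety condition lies in making $df^{\mathrm{per}(Q)}|_{E^s}$ equal to $r \cdot \mathrm{Id}$ on the $2$-dimensional stable bundle.

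First, I would use the wildness of $H(P)$ together with Gourmelon's theorem to perturb $f$ to some $g_1$ arbitrarily $C^1$-close so that there is a hyperbolic periodic point $Q_1$ inside $H(P(g_1), g_1)$, homoclinically related to $P(g_1)$, which carries a homoclinic tangency. Since $Q_1$ is homoclinically related to $P(g_1)$, it has index $1$. The volume-expanding condition, which is open, can be arranged by choosing $Q_1$ with orbit containing a long stretch close to iterates of $P(g_1)$ (so that its Jacobian is as close as desired to a power of $J(P(g_1))$), or enforced directly by a further application of Franks' lemma that multiplies $df^{\mathrm{per}(Q_1)}$ by a small positive scalar.

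Second, the core step is to upgrade this ordinary tangency at $Q_1$ to a homothetic one. I need to adjust $df^{\mathrm{per}(Q_1)}|_{E^s(Q_1)}$ so that its two stable multipliers coincide as a real number and the map is diagonalizable; equivalently, I need to kill either a complex-conjugate pair or a Jordan block structure on $E^s$. The robust absence of dominated splitting on $H(P)$ is precisely the hypothesis that enables, via the cocycle-perturbation technology of \cite{BDP}, to freely mix directions along long periodic orbits inside $H(P)$ and to produce periodic points whose stable return map has a prescribed eigenvalue configuration. I would apply this, combined with a tangency-preserving version of Franks' lemma (as used in \cite{GouH}) whose support is a tubular neighborhood of the orbit disjoint from the tangency point and from the transverse homoclinic intersections with $P(g_1)$, to produce $g_2$ arbitrarily $C^1$-close to $g_1$ and a periodic point $Q_2 \in H(P(g_2), g_2)$ which is volume-expanding, of index $1$, homoclinically related to $P(g_2)$, carries the homoclinic tangency, and whose stable linear part already has a real double eigenvalue. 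A final Franks-style perturbation along the orbit of $Q_2$, supported away from the tangency and the homoclinic intersections, makes this block exactly a scalar multiple of the identity, yielding the desired $g$ and $Q := Q_2$.

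The main obstacle I expect is the second stage: namely, exploiting the robust non-dominance to equalize and diagonalize the stable eigenvalues while simultaneously preserving both the homoclinic tangency and the homoclinic relation with $P(g)$. The delicate point is that the \cite{BDP}-type cocycle perturbations must be localized along the periodic orbit of $Q_1$ in a way that does not destroy the tangential intersection already created in the first stage, nor the transverse heteroclinic intersections realizing the homoclinic relation with $P$. Controlling all three requirements under a single arbitrarily small perturbation forces one to work with very long periodic orbits shadowing the dynamics inside $H(P)$ and to distribute the required spectral adjustments along a disjoint portion of the orbit, which is exactly the regime where the BDP perturbation lemmas are designed to operate.
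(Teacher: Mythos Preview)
Your ingredients are correct but the order in which you apply them is the wrong way round, and this creates a genuine gap.

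The paper proceeds in the opposite order: it first produces (Proposition~\ref{p1degtan}) a periodic point $Q$ homoclinically related to $P(g)$ whose stable return map is already a homothety, and only then applies Gourmelon's tangency theorem (Lemma~\ref{tgouh}) to $Q$. The crucial feature of Gourmelon's result exploited here is that the tangency-creating perturbation \emph{preserves the dynamics on a neighborhood of $Q$}; hence the homothety survives automatically, and the homoclinic relation with $P$ persists by openness.

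Your order (tangency first, then homothety) runs into the following problem. The BDP machinery does not modify the spectrum of a \emph{given} periodic point $Q_1$: it locates \emph{new} periodic points of very long period inside the class, along whose orbits the absence of domination can be exploited to mix directions (this is exactly how Lemmas~\ref{lcomp} and~\ref{lhomo} work). If you pass to such a new point $Q_2$, there is no reason it carries a homoclinic tangency---the tangency built in your first stage was at $Q_1$, not at $Q_2$. If instead you keep $Q_1$ and attempt to equalize its two stable eigenvalues by a Franks-type perturbation along $\mathcal{O}(Q_1)$, you need a path of cocycles of diameter $<\varepsilon$ ending at one with a repeated stable eigenvalue; but Gourmelon's tangency theorem gives no control over the period of $Q_1$ nor over the domination properties of $df$ restricted to $\mathcal{O}(Q_1)$, so there is no reason such a short path exists. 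Your ``tangency-preserving version of Franks' lemma'' would indeed keep the tangency (Gourmelon's Franks' lemma preserves the invariant manifolds outside the support), but its input hypothesis---a short hyperbolic path to a homothetic cocycle over $\mathcal{O}(Q_1)$---is precisely what is missing.

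In short, the sentence ``produce $Q_2$ which carries the homoclinic tangency \emph{and} whose stable linear part has a real double eigenvalue'' is unjustified. Reversing the two stages, as the paper does, removes the difficulty entirely: once the homothetic linear data is installed at $Q$, Gourmelon's tangency perturbation is local near $\mathcal{O}(Q)$ and hence respects it.
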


The second proposition says that from a homothetic tangency 
one can create a heterodimensional cycle by an arbitrarily $C^1$-small perturbation.

\begin{proposition}\label{pbiftan}
Let $f \in \mathrm{Diff}^1(M)$ with $\dim M =3$ and
$Q$ be a volume-expanding hyperbolic periodic point of $f$ with $\mathrm{ind}(Q)=1$.
If  $Q$ has a homothetic tangency,
then one can find 
a $C^1$-diffeomorphism $g$ arbitrarily $C^1$-close to $f$ such that the following properties hold.
\begin{enumerate}
\item There exists a hyperbolic periodic point $R$ of $g$ with $\mathrm{ind}(R)=2$. 
\item $g$ has a heterodimensional cycle associated to
two periodic points $Q(g)$ and $R$.
\end{enumerate}
\end{proposition}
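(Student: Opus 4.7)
My plan is to unfold the homothetic tangency at $Q$ and, via a renormalization of the return map along the tangency orbit, produce a hyperbolic periodic point $R$ of index $2$ that is connected to $Q(g)$ by a heterodimensional cycle. The homothety hypothesis makes the renormalization limit explicit, while the volume-expanding hypothesis is what ensures $R$ has the correct index.

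The preparatory step is a local linearization. Using Franks' lemma together with a standard local perturbation argument, I would first $C^1$-perturb $f$ so that $L := f^{\mathrm{per}(Q)}$ is linear on a small neighborhood of $Q$. In coordinates centered at $Q = 0$ one writes $L(x, y, z) = (\lambda_u x, \lambda_s y, \lambda_s z)$ with $\lambda_u > 1 > \lambda_s > 0$ and $\lambda_u \lambda_s^2 > 1$. Placing the tangency point at $p = (1, 0, 0) \in W^u_{\mathrm{loc}}(Q)$ and $q := f^N(p) \in W^s_{\mathrm{loc}}(Q)$, the transition map $T := f^N$ near $p$ satisfies the tangency condition $DT(p) \cdot e_1 \in T_q W^s_{\mathrm{loc}}(Q) = \{x = 0\}$. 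A further $C^1$-small perturbation supported near $p$, disjoint from the linearizing neighborhood of $Q$, would introduce a one-parameter family $f_t$ with $T_t(p) = q + t e_1$ (so $t$ unfolds the tangency in the unstable direction), while simultaneously putting the remaining entries of $DT(p)$ in generic position.

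The heart of the argument is then to study the return map $\Phi_{k, t} := T_t \circ L^k$ near $L^{-k}(p) = (\lambda_u^{-k}, 0, 0)$. Rescaling $x$ by $\lambda_u^k$, $(y, z)$ by $\lambda_s^{-k}$, and setting $t = \tau \lambda_u^{-k}$, the fixed-point equation becomes, in the limit $k \to \infty$, a bounded non-degenerate equation whose solutions form an interval parametrized by $\tau$. A direct computation of the characteristic polynomial of $D\Phi_{k, t}$ at such a solution shows that, for large $k$, its three roots are one eigenvalue of order $\lambda_s^k$ and two eigenvalues of order $(\lambda_u \lambda_s)^{k/2}$. Since $\lambda_u \lambda_s^2 > 1$ and $\lambda_s < 1$ together force $\lambda_u \lambda_s > 1$, both of the latter exceed $1$ in modulus while the first lies below, so the fixed point is hyperbolic of index $2$. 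For each large $k$ and each admissible $\tau$, the implicit function theorem then produces a hyperbolic index-$2$ periodic point $R = R(k, \tau)$ of $f_t$ (with $t = \tau \lambda_u^{-k}$) of period $k \cdot \mathrm{per}(Q) + N$.

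Finally, the cycle is verified as follows. By construction, the orbit of $R$ shadows the orbit of the tangency and passes close to both $p \in W^u_{\mathrm{loc}}(Q(g))$ and $q \in W^s_{\mathrm{loc}}(Q(g))$. The one-dimensional $W^s(R)$ therefore accumulates onto $W^u(Q(g))$ near $p$ and, by the $\lambda$-lemma, intersects it transversely; the two-dimensional $W^u(R)$, which passes close to $W^s_{\mathrm{loc}}(Q(g))$ near the point of $R$'s orbit close to $q$, crosses $W^s(Q(g))$ transversely there because $t \neq 0$ has moved $W^u(Q(g))$ off $W^s(Q(g))$ in the unstable direction. This produces the required cycle between $Q(g)$ and $R$. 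The main technical obstacle is the eigenvalue count in the renormalization step: the volume-expanding hypothesis $\lambda_u \lambda_s^2 > 1$ is precisely what lifts both central eigenvalues above modulus $1$, while the homothety hypothesis is what keeps the two-scale rescaling clean enough to prevent rotations in the stable $2$-plane from disturbing this spectral split. Without either hypothesis, the same construction would generically yield a periodic point of the wrong index, breaking the cycle.
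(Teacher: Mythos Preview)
Your overall strategy---unfold the tangency, study the first-return map $T_t\circ L^k$, find a fixed point $R$ and compute its index, then verify the cycle---is exactly the paper's strategy, and your eigenvalue computation is correct: with the tangency condition $(DT)_{11}=0$ and a genericity assumption on the remaining entries of $DT(p)$, the characteristic polynomial of $D\Phi_{k,t}$ has one root of order $\lambda_s^k$ and two of order $(\lambda_u\lambda_s)^{k/2}$, and $\lambda_u\lambda_s^2>1$ forces $\lambda_u\lambda_s>1$.

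The genuine gap is the cycle verification. Your invocation of the $\lambda$-lemma does not produce the intersection $W^s(R)\cap W^u(Q)$: both manifolds are one-dimensional in a three-manifold, so their intersection is a codimension-one condition, not a generic or open one. Concretely, the stable eigenvector of $D\Phi_{k,t}$ is, to leading order, $(0,w_2,w_3)$ with $(DT)_{12}w_2+(DT)_{13}w_3=0$; the local stable curve of $R$ therefore lies in an affine plane $\{x\approx\lambda_u^{-k}\}$ through $R\approx(\lambda_u^{-k},q_y,q_z)$ in that direction, and it meets the $x$-axis only if $(q_y,q_z)$ happens to be parallel to $(w_2,w_3)$. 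Nothing in your setup forces this. Likewise, your argument for $W^u(R)\cap W^s(Q)$ (``$t\neq 0$ has moved $W^u(Q)$ off $W^s(Q)$'') explains why the tangency is broken, not why the two-dimensional $W^u(R)$ actually crosses $\{x=0\}$.

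The paper resolves this not by a soft argument but by a substantial preliminary normal-form reduction (its Proposition~\ref{laffcst}): it first splits the homothetic stable eigenvalue into $\tilde\lambda<\lambda$, and then, by a sequence of Franks-lemma perturbations exploiting the ratios $(\tilde\lambda/\lambda)^l,(\tilde\lambda/\mu)^l,(\lambda/\mu)^l\to 0$, forces the transition map to be exactly affine of the form $F_t^N(x,y,z+p)=(az,by+q,cx+t)$. In this form the $y$-direction is invariant and a specific parameter $t_n=p/\mu^n$ places $R_n=(0,y_n,z_n)$ so that an explicit $y$-segment $\ell_n$ through $f^n(R_n)$ and $P$ is uniformly contracted (hence lies in $W^s(R_n)$) and meets the $z$-axis $=W^u(X)$; an explicit $z$-segment $\pi_n$ through $R_n$ is uniformly backward-contracted (hence lies in $W^u(R_n)$) and meets the $xy$-plane $=W^s(X)$. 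Note that splitting the stable eigenvalues is essential for the paper's normal-form perturbations to be $C^1$-small; keeping the homothety, as you do, blocks that route. If you want to avoid the normal form you will need a different, careful argument---for instance, using the freedom in the unfolding parameter to tune the position of $R$ so that the one-dimensional intersection condition is met---rather than the $\lambda$-lemma.
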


We need the following results in \cite{BDKS}.

\begin{lemma}[Theorem 1 in \cite{BDKS}]\label{BDKS}
Let $f$ be a $C^1$-diffeomorphism  with a heterodimansional cycle
associated to saddles $Q$ and $R$ with $\mathrm{ind}(Q) - \mathrm{ind}(R) =\pm1$.
Suppose that at least one of the homoclinic classes of these saddles is non-trivial.
Then there are diffeomorpshims $g$ arbitrarily $C^1$-close to $f$ with robust heterodimensional cycles 
associated to two transitive hyperbolic sets containing the continuations $Q(g)$ and $R(g)$.
\end{lemma}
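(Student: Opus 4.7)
\emph{Proof proposal.} The standard mechanism to promote a heterodimensional cycle to a robust one is the construction of a \emph{blender}, introduced by Bonatti and D\'{i}az: a hyperbolic set whose stable or unstable lamination has ``one dimension more than it should,'' in the sense that it admits a $C^1$-open family of disks of complementary codimension all of which meet the lamination. The plan is to use the non-triviality hypothesis to generate a rich recurrence near one of the two saddles, perform a $C^1$-small perturbation converting that recurrence into a blender, and then verify that the invariant manifolds of the other saddle activate the blender's covering property.

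Assume without loss of generality that $\mathrm{ind}(Q) = k+1$ and $\mathrm{ind}(R) = k$, and that $H(R, f)$ is non-trivial; set $n = \dim M$. Non-triviality yields a saddle $R'$ homoclinically related to $R$. Applying Franks' lemma to the periodic linear cocycle along the orbit of $R'$, followed by a $\lambda$-lemma / shadowing argument at the homoclinic orbits of $R'$, I would produce $g$ arbitrarily $C^1$-close to $f$ and a transitive hyperbolic set $\Lambda \ni R(g)$ of index $k$ which is a $cu$-blender: there is a $C^1$-open family $\mathcal{D}$ of embedded $(n-k-1)$-dimensional disks each of which robustly meets $W^u(\Lambda, g)$. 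The perturbations can be chosen with support disjoint from the fragile heteroclinic connection, so that the original intersection point in $W^u(R, f) \cap W^s(Q, f)$ survives; moreover, because the blender can be localized in an arbitrarily small neighborhood of the orbit of $R'$, its superposition region can be positioned so that a suitable iterate of $W^s_{\mathrm{loc}}(Q(g), g)$ lies in $\mathcal{D}$.

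Once that positioning is achieved, $W^u(\Lambda, g) \cap W^s(Q(g), g) \neq \emptyset$ and this intersection persists under every sufficiently $C^1$-small further perturbation, by the defining property of the blender. The other branch of the cycle, $W^s(R) \cap W^u(Q)$, has dimension sum $(n-k) + (k+1) = n+1$, hence is transversal and automatically robust; since $R(g) \in \Lambda(g)$, the intersection $W^s(\Lambda(g)) \cap W^u(Q(g))$ is also robustly non-empty. The pair $(\Lambda(g), \mathcal{O}(Q(g)))$ therefore realises a $C^1$-robust heterodimensional cycle between two transitive hyperbolic sets containing the continuations $Q(g)$ and $R(g)$. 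The symmetric case in which only $H(Q, f)$ is non-trivial is handled in the same way, but by constructing a $cs$-blender near $Q$ of index $k+1$ whose superposition region is activated by $W^u(R(g), g)$.

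The main difficulty is the blender construction itself: one must guarantee simultaneously (i) the covering property of $\Lambda$ on an open family of disks of the correct dimension; (ii) that $\Lambda$ is homoclinically related to $R$ in the perturbed system, so that $R(g) \in \Lambda(g)$; and (iii) that the fragile branch of the original cycle can be placed inside the superposition region by the same single perturbation. Each condition requires delicate control of the linear cocycle along the homoclinic orbit of $R'$ combined with geometric control of the non-linear dynamics in a small tube around it, and the three have to be achieved at once rather than sequentially; this coordination, rather than any individual step, is the hard part of the argument.
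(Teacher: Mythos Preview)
The paper does not prove this lemma at all: it is quoted verbatim as Theorem~1 of \cite{BDKS} and used as a black box in the derivation of Proposition~\ref{pw-hdc}. There is therefore no ``paper's own proof'' to compare against.

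That said, your outline is a fair high-level sketch of the actual argument in \cite{BDKS}: the mechanism is indeed the construction of a blender near one of the saddles, exploiting the non-triviality of its homoclinic class, together with the observation that the other branch of the cycle is transverse and hence robust. Your identification of the difficulty---coordinating the covering property, the homoclinic relation to the original saddle, and the activation of the superposition region by the fragile branch, all within a single $C^1$-small perturbation---is accurate; in \cite{BDKS} this coordination passes through an intermediate reduction to a model family and then a careful perturbative construction of a ``blender--horseshoe''. Your sketch omits those intermediate normalizations, which is where most of the work lies, so as written it is an outline rather than a proof; but as an outline it points in the right direction.
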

We can summarize the results of Proposition \ref{pdegtan},
Proposition \ref{pbiftan}, and Lemma \ref{BDKS} as follows:
\begin{proposition}\label{pw-hdc}
Let $f \in \mathrm{Diff}^1(M)$ with $\dim M =3$ and
$P$ be a volume-expanding index-$1$ hyperbolic periodic point of $f$.
If $H(P)$ is wild then one can find 
a $C^1$-diffeomorphism $g$ arbitrarily $C^1$-close to $f$ 
and a hyperbolic periodic point $R$ of $g$ with $\mathrm{ind}(R) =2$
such that $g$ has a robust heterodimensional cycle
associated to two transitive hyperbolic sets containing $P(g)$ and $R$.
\end{proposition}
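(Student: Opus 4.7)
The plan is to prove Proposition \ref{pw-hdc} by applying Proposition \ref{pdegtan}, Proposition \ref{pbiftan}, and Lemma \ref{BDKS} in succession, while ensuring at each step that the homoclinic relation to $P$ is preserved. All three ingredients produce perturbations that are arbitrarily $C^1$-close, so the composite perturbation can likewise be made arbitrarily $C^1$-close to $f$, and the continuations of all relevant periodic points remain well defined throughout.

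First I would apply Proposition \ref{pdegtan} to $f$ and $P$ to obtain a diffeomorphism $g_1$ arbitrarily $C^1$-close to $f$, a volume-expanding index-one hyperbolic periodic point $Q$ of $g_1$ homoclinically related to $P(g_1)$, and a homothetic tangency at $Q$. Then I would apply Proposition \ref{pbiftan} to $g_1$ at the point $Q$, producing a diffeomorphism $g_2$ arbitrarily $C^1$-close to $g_1$, an index-two hyperbolic periodic point $R$ of $g_2$, and a heterodimensional cycle associated to $Q(g_2)$ and $R$. Since being homoclinically related is an open condition in the $C^1$ topology (it is witnessed by finitely many transversal intersections between invariant manifolds), taking $g_2$ sufficiently close to $g_1$ guarantees that $P(g_2)$ and $Q(g_2)$ remain homoclinically related.

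Next I would apply Lemma \ref{BDKS} to $g_2$ at the cycle between $Q(g_2)$ and $R$. The index difference $|\mathrm{ind}(Q(g_2)) - \mathrm{ind}(R)| = |1-2| = 1$ is as required, and the non-triviality hypothesis on one of the two homoclinic classes is satisfied because $P(g_2)$ lies in $H(Q(g_2),g_2)$ and has a periodic orbit distinct from that of $Q(g_2)$, so $H(Q(g_2),g_2)$ is non-trivial. The lemma therefore furnishes a diffeomorphism $g$ arbitrarily $C^1$-close to $g_2$ together with a robust heterodimensional cycle associated to two transitive hyperbolic sets $\Gamma \ni Q(g)$ and $\Sigma \ni R(g)$.

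The last step I would carry out is to upgrade $\Gamma$ to a transitive hyperbolic set containing $P(g)$. By openness of the homoclinic relation, if $g$ is taken close enough to $g_2$ then $P(g)$ is still homoclinically related to $Q(g) \in \Gamma$. By the classical Birkhoff--Smale horseshoe construction, applied to the transversal homoclinic intersections between the orbit of $P(g)$ and $\Gamma$, one can build a transitive hyperbolic set $\widetilde{\Gamma}$ containing $\Gamma$ together with the orbit of $P(g)$. Because $W^s(\widetilde{\Gamma}) \supset W^s(\Gamma)$ and $W^u(\widetilde{\Gamma}) \supset W^u(\Gamma)$, the robust heterodimensional cycle between $\Gamma$ and $\Sigma$ transfers to a robust heterodimensional cycle between $\widetilde{\Gamma}$ and $\Sigma$, which yields the proposition. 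Since each cited result already handles its own technical difficulties, no genuine obstacle remains; the only point requiring attention is verifying that the hyperbolic extension $\widetilde{\Gamma}$ exists and preserves robustness of the cycle, but this is a standard application of the $\lambda$-lemma and transversality.
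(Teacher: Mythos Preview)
Your proof is correct and follows the same route as the paper: apply Proposition \ref{pdegtan}, then Proposition \ref{pbiftan}, then Lemma \ref{BDKS}, tracking the homoclinic relation between $P$ and $Q$ via openness. The paper in fact gives no argument at all for Proposition \ref{pw-hdc} beyond the sentence ``We can summarize the results of Proposition \ref{pdegtan}, Proposition \ref{pbiftan}, and Lemma \ref{BDKS} as follows,'' so you have actually written out more than the paper does---in particular the final step, where you enlarge the hyperbolic set $\Gamma \ni Q(g)$ to a transitive hyperbolic set $\widetilde{\Gamma}$ containing $P(g)$ and observe that the robust cycle transfers, is a detail the paper leaves entirely implicit.
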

It is clear that Proposition \ref{pw-hdc} implies Theorem \ref{nPalis}.

Now, let us see how we prove Theorem \ref{mainth}
using Proposition \ref{pw-hdc}.
For the proof, we need some generic properties about $C^1$-diffeomorphisms
listed below.

\begin{enumerate}
\def\labelenumi{($\mathcal{R}_{\theenumi}$)}
\item We denote the set of Kupka-Smale diffeomorphisms by $\mathcal{R}_1$
and the set of Kupka-Smale 
diffeomorphisms such that none of the periodic points
are conservative by $\mathcal{R}'_1$. 
These are residual sets in $\mathrm{Diff}^1(M)$. 
The genericity of $\mathcal{R}_1$ is well known.
One can prove the genericity of $\mathcal{R}'_1$ by modifying the 
usual proof of Kupka-Smale theorem (see \cite{R}, for example) 
with paying attention to the fact that 
the volume-conservativeness of a hyperbolic periodic point is a fragile
property (i.e., can be destroyed by an arbitrarily $C^1$-small perturbation).

\item By $\mathcal{R}_2$ we denote the set of diffeomorphisms
such that following holds: Any chain reccurence class $C$ containing 
a hyperbolic periodic point $P$ satisfies $C =H(P)$. See \cite{BC}.

\item By $\mathcal{R}_3$ we denote the set of the diffeomorphisms $f$ that 
satisfying the follwoing: Let $P$ and $Q$ be 
hyperbolic periodic points with $\mathrm{ind}(P) = \mathrm{ind}(Q)$.
If $H(P) \cap H(Q) \neq \emptyset$ then $P$ and $Q$ are homoclinically related.
We will give the proof of the genericity of  $\mathcal{R}_3$ later.
\end{enumerate}

The following lemma is easy to prove, so we omit the proof.
\begin{lemma}\label{l-easy}
Let $P$ and $Q$ be hyperbolic periodic points and 
assume there exists a heterodimensional cycle associated to two 
transitive hyperbolic invariant sets $\Gamma$ and $\Sigma$
such that $\Gamma$ contains $P$ and $\Sigma$ contains $Q$. 
Then, $P$ and $Q$ belong to the same chain recurrence class.
\end{lemma}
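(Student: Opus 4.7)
The plan is to verify the definition of chain equivalence directly by producing, for arbitrary $\varepsilon > 0$, an $\varepsilon$-chain from $P$ to $Q$ and one from $Q$ to $P$. The argument has two ingredients: chains running inside a single transitive hyperbolic set (supplied by transitivity) and chains that jump between the two sets (supplied by the heteroclinic intersections).

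First I would exploit transitivity. Since $\Gamma$ is a transitive compact invariant set containing $P$, for every $\varepsilon > 0$ and every $z \in \Gamma$ there is a finite orbit segment that shadows any prescribed sequence of points of $\Gamma$ up to error $\varepsilon$; in particular one can build an $\varepsilon$-chain from $P$ to $z$ and from $z$ to $P$. The same holds for $Q$ and any $w \in \Sigma$.

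Next, I would use the heterodimensional cycle. Pick $x \in W^u(\Gamma) \cap W^s(\Sigma)$. Then $f^{-n}(x)$ accumulates on $\Gamma$ and $f^n(x)$ accumulates on $\Sigma$, so for $N$ large enough there exist $a \in \Gamma$ and $b \in \Sigma$ with $d(a, f^{-N}(x)) < \varepsilon$ and $d(f^N(x), b) < \varepsilon$. Concatenating (i) an $\varepsilon$-chain from $P$ to $a$ inside $\Gamma$, (ii) the single jump $a \to f^{-N}(x)$, (iii) the genuine orbit segment $f^{-N}(x), f^{-N+1}(x), \dots, f^{N}(x)$ (whose consecutive gaps are zero), (iv) the jump $f^{N}(x) \to b$, and (v) an $\varepsilon$-chain from $b$ to $Q$ in $\Sigma$, yields an $\varepsilon$-chain from $P$ to $Q$. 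A symmetric construction starting from a point $y \in W^u(\Sigma) \cap W^s(\Gamma)$ (which exists by the other half of the cycle) produces an $\varepsilon$-chain from $Q$ to $P$. Since $\varepsilon$ is arbitrary, $P$ and $Q$ are chain equivalent and therefore lie in the same chain recurrence class.

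There is no real obstacle here; the only point requiring care is the $\varepsilon$-bookkeeping, which is handled by shrinking the tolerance used to build the in-$\Gamma$ and in-$\Sigma$ chains and by choosing $N$ large enough so that the two jumps across the heteroclinic orbit are each of size less than $\varepsilon$. This is why the author legitimately omits the proof.
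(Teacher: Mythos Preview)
Your argument is correct and is the natural one; the paper itself omits the proof entirely, saying only that the lemma ``is easy to prove.'' The minor bookkeeping you flag (at the jump points one actually needs, e.g., $d(f(a), f^{-N+1}(x)) < \varepsilon$ rather than $d(a, f^{-N}(x)) < \varepsilon$) is indeed handled by uniform continuity of $f$ and a suitable shrinking of tolerances, exactly as you indicate.
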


Let us give the proof of Theorem \ref{mainth} using Proposition \ref{pw-hdc}.
\begin{proof}[Proof of Theorem \ref{mainth}]
For $f \in \mathrm{Diff}^1(M)$,
let $\mathrm{Per}^N_{h}(f)$ be the set of hyperbolic 
periodic points whose periods are less than $N$.
For every $f \in \mathcal{R}'_{1}$, we take an open neighborhood 
$\mathcal{U}_N(f) \subset \mathrm{Diff}^1(M)$  of $f$ 
such that every $g \in \mathcal{U}_N(f)$ satisfies the following conditions:
\begin{enumerate}
\item For each $P_i \in \mathrm{Per}^N_{h}(f)$, one can define the continuation $P_i(g)$ (in particular we require that every $P_i$ does not exhibit
index bifurcation in $\mathcal{U}_N(f)$).
\item There is no creation of periodic points with period less than $N$, more precisely, 
the set $\{ P_i(g)\}$ exhausts the periodic points 
whose periods are less than $N$.
\item For each $P_i \in \mathrm{Per}^N_{h}(f)$, the signature of $\log |J(P_i(g))|$ is independent of the choice of $g$.
\end{enumerate}
Note that since $f$ is Kupka-Smale, $\mathrm{Per}^N_{h}(f)$ is a finite set.
From now on, for each $\mathcal{U}_N(f)$, 
we create an open and 
dense set $\mathcal{O}_{N} \subset \mathcal{U}_N(f)$ as follows. 
First, for each $g \in \mathcal{U}_N(f)$, we define an open set 
$\mathcal{V}_{i,N}[g] \subset \mathcal{U}_N(f)$ as follows: 
\begin{enumerate}
\item If $P_i$ satisfies one of the following conditions; 
\begin{itemize} 
\item $\mathrm{ind}(P_i) \neq 1$,
\item volume-contracting (by assumption $P_i$ cannot be conservative), or
\item arbitrarily $C^1$-close to $g$ there exists 
a $C^1$-diffeomorphism $g'$ such that $H(P_i,g')$ admits a dominated splitting.
\end{itemize}
Then $\mathcal{V}_{i,N}[g] := \emptyset$. 
\item Otherwise, $\mathcal{V}_{i,N}[g]$ is the open set 
in $\mathcal{U}_N(f)$ obtained by Proposition \ref{pw-hdc}.
\end{enumerate}
Then, put $\mathcal{V}_{i,N}(f) := \cup _{g \in \mathcal{U}(f)} \mathcal{V}_{i,N}[g]$.
Since each $\mathcal{V}_{i,N}[g]$ are open, $\mathcal{V}_{i,N}(f)$ is open.
Let us put $\mathcal{W}_{i,N}(f) := \mathcal{U}_N(f) \setminus \overline{\mathcal{V}_{i,N}(f)}$
and $\mathcal{O}_{i,N}(f) := \mathcal{V}_{i,N}(f) \cup \mathcal{W}_{i,N}(f) $.
By construction, $\mathcal{O}_{i,N}(f)$ is open and dense in $\mathcal{U}_N(f)$ and
for every $g \in \mathcal{O}_{i,N}(f)$, if $P_i(g)$ is 
a volume-expanding, hyperbolic and index-1 periodic point and $H(P_i,g)$ is wild,
then $g$ has a robust cycle associated with transitive hyperbolic sets containing
$P_i(g)$ and some hyperbolic periodic point of index $2$.

Now, put $\mathcal{O}_N(f) := \cap _i \mathcal{O}_{i,N}(f)$. Since each 
$\mathcal{O}_{i,N}(f)$ is open and dense, 
so is $\mathcal{O}_N(f)$ in $\mathcal{U}_N(f)$ and the set 
$\mathcal{O}_N := \cup _{f \in \mathcal{R}_{1}} \mathcal{O}_N(f)$ is the 
required open and dense subset of $\mathrm{Diff}^1(M)$.
Then, put $\mathcal{R}_{\ast} := \cap_{N} \mathcal{O}_N$. 
By Baire's category theorem this is a residual subset of $\mathrm{Diff}^1(M)$ and 
satisfies the following property: If $f \in \mathcal{R}_{\ast}$ and there exists a hyperbolic, index-1 and volume-expanding 
periodic point $P$ whose homoclinic class is wild, 
then one can find a robust cycle associated with $P$ and
some periodic point with index $2$.

Finally, put 
$\mathcal{R}_{\ast \ast} := \mathcal{R}_{\ast} \cap \mathcal{R}_2 \cap \mathcal{R}_3$.
We show that every $f \in \mathcal{R}_{\ast \ast}$ satisfies the conclusion of Theorem \ref{mainth}.
Suppose that there is a hyperbolic periodic point $P$ of $f$ such that
$H(P)$ is a wild homoclinic class containing 
a volume-expanding hyperbolic periodic point $Q$ with $\mathrm{ind}(Q)=1$.
If $\mathrm{ind}(P)=2$, then we have the conclusion.
So let us assume $\mathrm{ind}(P)=1$.
Since $f \in \mathcal{R}_3$, $P$ and $Q$ are homoclinically related.
For $H(P)$ is wild, so is $H(Q)$.
Then, by the definition of  $\mathcal{R}_{\ast}$,
$f$ has a heterodimensional cycle associated to two transitive hyperbolic set
such that one of them contains $Q$ and the other contains a hyperbolic 
periodic point $R$ of index $2$. 
By Lemma \ref{l-easy}, $Q$ and $R$ belong to the same chain recurrence class.
Since $ f \in \mathcal{R}_2$, we have $R \in H(Q)=H(P)$.
\end{proof}

\begin{remark}
Indeed, the proof above says that, 
for $C^1$-generic diffeomorphisms of a three-dimensional closed smooth manifold,
every wild homoclinic class $H(P)$ that contains a index-one volume-expanding 
hyperbolic periodic point contains a robust heterodimensional cycle. 
\end{remark}

Now, we give the proof of genericity of $\mathcal{R}_3$.
For the proof, we need another generic property and one $C^1$-perturbation lemma.
\begin{lemma}[Lemma 2.1 in \cite{ABCDW}]\label{lloc-cnt}
There is a residual subset $\mathcal{R}_4 \subset \mathrm{Diff}^1(M)$
such that, for every diffeomorphism $f \in \mathcal{R}_4$ and every 
pair of saddles $P(f)$ and $Q(f)$ of $f$, there is a neighborhood $\mathcal{U}_f$
of $f$ in $\mathcal{R}_4$ such that either $H(P,g) = H(Q,g)$ for all $g \in \mathcal{U}_f$,
or $H(P,g) \cap H(Q,g) = \emptyset$ for all $g \in \mathcal{U}_f$.
\end{lemma}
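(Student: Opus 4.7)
The plan is to define $\mathcal{R}_4$ as the set of joint continuity points of the maps $g \mapsto H(P(g), g)$ as $P$ ranges over a suitable countable family of hyperbolic periodic orbits, and then to apply Baire's theorem. Recall that for any hyperbolic saddle $P$ of $f$, the continuation $P(g)$ is defined on an open neighborhood $\mathcal{U}_P$ of $f$, and on $\mathcal{U}_P$ the map $\Phi_P \colon g \mapsto H(P(g), g)$ into the compact subsets of $M$ (equipped with the Hausdorff topology) is lower semi-continuous. This is the standard fact that transverse homoclinic intersections of $P(g)$ persist under $C^1$-small perturbations and vary continuously with $g$, so every point of $H(P, f)$ is the Hausdorff-limit of points of $H(P(g), g)$ as $g\to f$.

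Since $\mathrm{Diff}^1(M)$ is a Baire space, the set of continuity points of a lower semi-continuous map into a separable metric space (the space of compact subsets of $M$) is residual. To pass from a single saddle to all saddles of all diffeomorphisms, I would work inside the residual Kupka--Smale set $\mathcal{R}_1$, where for each diffeomorphism the periodic orbits form a countable family indexed by period, and then take the countable intersection of the continuity residuals of $\Phi_P$ over all orbits of all diffeomorphisms in a fixed $C^1$-dense countable subset. The result is a residual $\mathcal{R}_4\subset\mathrm{Diff}^1(M)$ such that, for every $f\in\mathcal{R}_4$ and every pair of saddles $P, Q$ of $f$, both $\Phi_P$ and $\Phi_Q$ are continuous at $f$.

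Now I would establish the dichotomy. If $H(P, f)\cap H(Q, f)=\emptyset$, then compactness gives a positive Hausdorff distance between the two classes, and by the joint continuity of $\Phi_P, \Phi_Q$ at $f$ this positivity persists on a $C^1$-neighborhood of $f$, so $H(P, g)\cap H(Q, g)=\emptyset$ there. If on the other hand the classes intersect, I would invoke the Hayashi--Bonatti--Crovisier connecting lemma to perturb $f$ into a diffeomorphism in which $P$ and $Q$ are homoclinically related (or, when the indices differ, lie in a common chain recurrence class, which by the generic set $\mathcal{R}_2$ is a single homoclinic class). Since being homoclinically related is a $C^1$-open condition forcing $H(P, g)=H(Q, g)$ locally, combining this open conclusion with the continuity of $\Phi_P$ and $\Phi_Q$ at $f$ yields a $\mathcal{R}_4$-neighborhood of $f$ on which the two classes coincide.

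The main obstacle is precisely the second case: upgrading the merely topological fact ``the two closed invariant sets touch'' into the dynamical fact ``there is a transverse heteroclinic connection between $P$ and $Q$.'' The connecting lemma machinery does this job, but one must be careful to perform the perturbation so as not to destroy the very structure one is trying to match, and to ensure that the homoclinic/chain relation one creates extends by openness back to a neighborhood of the original $f$ inside $\mathcal{R}_4$. This is exactly the Bonatti--Crovisier technology, and is where the real work of the lemma lies.
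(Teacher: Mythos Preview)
The paper does not supply its own proof of this lemma; it is quoted from \cite{ABCDW} (Lemma~2.1 there), so there is nothing in the present paper to compare your argument against.

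Your outline is the standard one and matches the approach in \cite{ABCDW}: lower semicontinuity of $g\mapsto H(P(g),g)$ gives a residual set of continuity points, and the dichotomy is obtained via the connecting-lemma machinery (or, more directly, via the generic coincidence of chain recurrence classes with homoclinic classes, since chain classes are automatically disjoint-or-equal). One step in your write-up is not yet an argument, however. In the intersecting case, the $C^1$-open set on which $P$ and $Q$ become homoclinically related after perturbation is a neighborhood of the \emph{perturbed} map, not of $f$. Continuity of $\Phi_P,\Phi_Q$ at $f$ then yields $H(P,f)=H(Q,f)$, but it does \emph{not} by itself produce a neighborhood of $f$ in $\mathcal{R}_4$ on which the classes coincide: two disjoint compacts can both Hausdorff-converge to the same limit set, so Hausdorff-continuity of both maps at $f$ is compatible with $H(P,g_n)\cap H(Q,g_n)=\emptyset$ along a sequence $g_n\to f$. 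You rightly flag this as ``where the real work lies,'' but the sentence claiming that openness of the homoclinic relation combined with continuity at $f$ ``yields a $\mathcal{R}_4$-neighborhood of $f$'' passes over exactly this gap.
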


\begin{lemma}[See Lemma 2.8 in \cite{ABCDW}]\label{l-cnn}
Let $P$ be a hyperbolic periodic point of a $C^1$-diffeomorphism $f$.
Consider a homoclinic class $H(P,f)$ and any saddle $Q \in H(P, f)$.
Then there is $g$ arbitrarily $C^1$-close to $f$ such that 
$W^s(P, g)$ and $W^u(Q, g)$ (resp. $W^u(P, g)$ and $W^s(Q, g)$) have 
a non-empty intersection.
\end{lemma}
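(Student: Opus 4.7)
The plan is to apply a connecting lemma---Hayashi's connecting lemma for invariant manifolds, or equivalently the pseudo-orbit connecting lemma of Bonatti--Crovisier---to convert the accumulation data supplied by $Q\in H(P,f)$ into a genuine intersection. The hypothesis furnishes a sequence of transverse homoclinic points $x_n\in W^u(P,f)\cap W^s(P,f)$ with $x_n\to Q$; in particular, both $W^u(P,f)$ and $W^s(P,f)$ accumulate on $Q$, and $Q$ itself lies in $W^s(Q,f)\cap W^u(Q,f)$. The goal is to use an arbitrarily $C^1$-small perturbation $g$ of $f$ to produce a true point $z\in W^u(P,g)\cap W^s(Q,g)$, and symmetrically a point in $W^s(P,g)\cap W^u(Q,g)$.

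The first step is to exhibit a point $z\in W^s(Q,f)\setminus(\mathcal{O}(P,f)\cup\mathcal{O}(Q,f))$ that is accumulated by $W^u(P,f)$. If some $x_n$ already lies on $W^s(Q,f)$ then the desired intersection exists for $f$ itself. Otherwise, in a small neighborhood of $Q$ the stable manifold theorem provides local manifolds $W^s_{\mathrm{loc}}(Q)$, $W^u_{\mathrm{loc}}(Q)$ and a local product structure, in which each $x_n$ decomposes into small stable and unstable components; using this structure together with the invariance of $W^u(P,f)$ under $f$, one propagates the accumulation from $Q$ itself to a nearby regular point $z\in W^s_{\mathrm{loc}}(Q)\setminus\{Q\}$. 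After shrinking the neighborhood and passing to a subsequence, $z$ avoids the orbits of $P$ and $Q$. Hayashi's connecting lemma then produces an arbitrarily $C^1$-small perturbation $g$ of $f$, supported in a tube around a finite orbit segment near $z$ disjoint from $\mathcal{O}(P,f)\cup\mathcal{O}(Q,f)$, such that $z\in W^u(P,g)$; the support being disjoint from $\mathcal{O}(Q,f)$ guarantees that $z\in W^s(Q,g)$ as well, giving the first intersection.

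The second intersection $W^s(P,g)\cap W^u(Q,g)\neq\emptyset$ is obtained by applying the same argument to $f^{-1}$ (which exchanges the roles of stable and unstable manifolds), and combining the two perturbations by choosing each one small enough to preserve the intersection created by the previous one. The main obstacle is the first step, producing the accumulation point $z$ off the periodic orbits: the naive accumulation of $W^u(P,f)$ is at $Q$ itself, which lies on $\mathcal{O}(Q,f)$ and so cannot serve as the support of Hayashi's perturbation. Propagating this accumulation from $Q$ to a nearby non-periodic point of $W^s(Q,f)$ requires careful use of the hyperbolic coordinates at $Q$ together with the invariance of $W^u(P,f)$, and it is here that the technical core of the argument lies; the Bonatti--Crovisier pseudo-orbit formulation bypasses this step at the cost of invoking more sophisticated perturbation machinery.
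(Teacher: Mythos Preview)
The paper does not prove this lemma itself; it is quoted from \cite{ABCDW} (as Lemma~2.8 there) and used as a black box. Your approach via Hayashi's connecting lemma is the standard one and matches how such statements are established in that literature.

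Two points in your sketch need sharpening. First, the propagation step: to push the accumulation of $W^u(P,f)$ from $Q$ to a nearby non-periodic $z\in W^s_{\mathrm{loc}}(Q)$ you implicitly iterate the $x_n$ backward in the local hyperbolic chart, which requires their stable component to be nonzero. This holds because $x_n\in W^u(P,f)$ forces $x_n\notin W^u_{\mathrm{loc}}(Q)$ whenever $\mathcal O(P)\neq\mathcal O(Q)$ (otherwise the backward orbit of $x_n$ would converge simultaneously to $\mathcal O(P)$ and to $\mathcal O(Q)$), and this should be said explicitly. Second, the assertion that the perturbation support being disjoint from $\mathcal O(Q,f)$ already yields $z\in W^s(Q,g)$ is not correct as written: Hayashi's tube lies along a forward orbit segment through $z$, so the forward $g$-orbit of $z$ itself may be altered inside the tube. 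The fix is routine---take as the intersection point a forward iterate $f^L(z)$ lying beyond the tube, whose further forward orbit is untouched by the perturbation and hence still converges to $Q$---but the inference you state does not follow from disjointness with $\mathcal O(Q,f)$ alone.
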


\begin{proof}[Proof of the genericity of $\mathcal{R}_3$]
For every $f \in \mathcal{R}_{0} \cap \mathcal{R}_4$, we take an open neighborhood 
$\mathcal{U}'_N(f) \subset \mathcal{R}_{0} \cap \mathcal{R}_4$  of $f$ 
such that for every $g \in \mathcal{U}'_N(f)$ we have the following conditions:
For each $P_i \in \mathrm{Per}^N_{h}(f)$, one can define the continuation $P_i(g)$
and these continuations exhaust periodic points whose periods are less than $N$ of $g$ (note that $\mathrm{Per}^N_{h}(f)$ is a finite set since $f \in \mathcal{R}_{0}$).
Now, for each $\mathcal{U}'_N(f)$, we construct an open 
and dense set $O_{N}(f)  \subset \mathcal{U}'_N(f)$ as follows. 
First, for each $g \in \mathcal{U}'_N(f)$ and $i \neq j$, let us define an open 
set $\mathcal{V}_{(i,j),N}[g] \subset \mathcal{U}'_N(f)$ as follows: 
if $P_i(g)$ and $P_j(g)$ are not homoclinically related, 
then $\mathcal{V}_{(i,j),N}[g]$ is the empty set. 
Otherwise $\mathcal{V}_{(i,j),N}[g] \subset \mathcal{U}'_N(f)$ is a non-empty open set 
satisfying the following: If $h \in \mathcal{V}_{(i,j),N}[g]$
then $P_i(h)$ and $P_j(h)$ are homoclinically related.
Then, let $\mathcal{V}_{(i,j),N}(f) := \cup _{g \in \mathcal{U}'_N(f)} \mathcal{V}_{(i,j),N}[g]$.
We put $\mathcal{W}_{(i,j),N}(f) := \mathcal{U}'_N(f) \setminus \overline{V_{(i,j),N}(f)}$
and $\mathcal{O}_{(i,j),N}(f) := {\mathcal{V}_{(i,j),N}(f)} \cup {\mathcal{W}_{(i,j),N}(f)}$.
By construction, $\mathcal{O}_{(i,j),N}(f)$ is open and dense in $\mathcal{U}'_N(f)$.

Let us see that for $h \in \mathcal{O}_{(i,j),N}$ following holds:
If $P_i  \in H(P_j , h)$ and $\mathrm{ind}(P_i) = \mathrm{ind}(P_j)$,
then $P_i$ and $P_j$ are homoclinically related.
If $h \in \mathcal{V}_{(i,j),N}(f)$, this is clear.
We show that for $h  \in  \mathcal{W}_{(i,j),N}$, we have $P_i \not \in H(P_j , h)$.
Indeed, if $P_i \in H(P_j , h)$, by applying Lemma \ref{lloc-cnt},
we can take a neighborhood $\mathcal{C}(h) \subset \mathcal{U}'_N(f)$ of $h$
such that for all $h' \in \mathcal{C}(h)$,  $H(P_i, h') = H(P_j, h')$.
Then, Lemma \ref{l-cnn} tells us there exists $h_1$ arbitrarily $C^1$-close to $h$
such that $W^u(P_i, h_1)$ and $W^s(P_j, h_1)$ have non-empty intersection.
By giving arbitrarily small perturbation, we can find a diffeomorphism $h_2$
such that $W^u(P_i, h_2)$ and $W^s(P_j, h_2)$ have 
non-empty transversal intersection.
Since it is a $C^1$-open property,
we can find $h_3 \in \mathcal{C}(h)$ arbitrarily $C^1$-close to $h_2$ such that  
$W^u(P_i, h_2)$ and $W^s(P_j, h_2)$ have non-empty transversal intersection
(note that $h_1$, $h_2$ can fail to belong to $\mathcal{C}(h)$).
By a similar argument, we can find $h_4$ arbitrarily $C^1$-close to $h_3$
such that $P_i(h_4)$ and $P_j(h_4)$ are homoclinically related.
This is a contradiction, since $h \in \mathcal{W}_{(i,j),N}(f) 
= \mathcal{U}_N(f) \setminus \overline{\mathcal{V}_{(i,j),N}(f)}$
and $h_4$ can be found arbitrarily $C^1$-close to $h$.

We put $\mathcal{O}_{N}(f) := \cap_{i \neq j} \mathcal{O}_{(i,j),N}(f)$.
Since $f_0 \in \mathcal{R}_0 $, 
the number of the periodic points with period less than $N$ are finite.
So $\mathcal{O}_N(f)$ is an open and dense subset of $\mathcal{U}'_N(f)$.
Now we define $\mathcal{U}'_N := \cup_{f \in \mathcal{R}_0}\mathcal{O}_N(f)$
and this is an open and dense subset of $\mathrm{Diff}^1(M)$.
Finally, we put $\mathcal{R}_3 := \cap_{N} \mathcal{U}'_N$.
Then, by construction, every diffeomorphism in $\mathcal{R}_3$
satisfies the desired condition and by Baire's category theorem
this set is residual in $\mathrm{Diff}^1(M)$.
\end{proof}

In the rest of this section, we discuss the proofs of Proposition \ref{pdegtan} and \ref{pbiftan}. 
In Section \ref{cretan} we give the proof of Proposition \ref{pdegtan}.
It is done by three techniques. The first one is the linear algebraic arguments developed in \cite{BDP}.
We conbine this technique with the second one, 
the Franks' lemma that preserves the invariant manifolds \cite{GouF}.
The third one is the result given by Gourmelon \cite{GouH} about  
the creation of homoclinic tangency for homoclinic classes
that does not admit dominated splittings. 

In Section \ref{biftan}, we give the proof of Proposition \ref{pbiftan}.
The proof consists of two steps. 
The first one is the reduction of the probrem to affine dynamics.
The second one is the investigation of the reduced dynamics involving 
some calculations.

\section{Creation of a homothetic tangency}\label{cretan}

In this section we prove Proposition \ref{pdegtan}. 

\subsection{Strategy for the proof of Proposition \ref{pdegtan}}
An important step to Proposition \ref{pdegtan} is the following Proposition.
\begin{proposition}\label{p1degtan}
Let $f \in \mathrm{Diff}^1(M)$ with $\dim M =3$, and 
let $P$ be a volume-expanding hyperbolic periodic point of $f$ such that
$\mathrm{ind}(P)=1$ and $H(P)$ is wild.
Then one can find a $C^1$-diffeomorphism $g$ arbitrarily $C^1$-close to $f$  such that following holds:
\begin{enumerate}
\item There exists a volume-expanding hyperbolic periodic point $Q$ of index $1$.
\item The differential $dg^{\mathrm{per}(Q)}(Q) $ has only positive and real eigenvalues.
\item Two periodic points $P(g)$ and $Q$ are homoclinically related.
\item The differential $dg^{\mathrm{per}(Q)}(Q)$ restricted to the stable direction of $Q$ is a homothety.
\end{enumerate}
\end{proposition}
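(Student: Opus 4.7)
The plan is to combine three ingredients: the linear algebraic perturbation machinery of Bonatti--D\'iaz--Pujals \cite{BDP} for periodic cocycles without dominated splittings, Gourmelon's refinement of Franks' lemma \cite{GouF} that preserves prescribed compact pieces of invariant manifolds, and an elementary parameter argument on the stable plane. The target is a periodic point $Q$ homoclinically related to $P(g)$ whose return map $dg^{\mathrm{per}(Q)}(Q)$ has eigenvalues $\lambda,\mu,\mu$ with $\lambda>1>\mu>0$ and $\lambda\mu^2>1$; such a point automatically satisfies all four conclusions.

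First, I would select the base periodic point. Since $H(P)$ is wild, the derivative cocycle over the dense set of periodic orbits in $H(P)$ homoclinically related to $P$ admits no dominated splitting of any type in the $3$-dimensional tangent bundle. Applying the perturbation results of \cite{BDP} to this periodic sub-cocycle yields, by an arbitrarily small cocycle perturbation along the orbit of some suitable periodic point $Q_0$ homoclinically related to $P$, a new linear cocycle whose return map at $Q_0$ has three real positive eigenvalues with prescribed ratios, subject only to the volume constraint $|J(Q_0)|$ being preserved up to an arbitrarily small error. Since $P$ is volume-expanding, the log-Jacobian at $Q_0$ can be arranged to remain strictly positive, and the prescribed ratios can be chosen so that one eigenvalue exceeds $1$ and the other two are equal, less than $1$, with total product greater than $1$.

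Second, I would invoke Gourmelon's Franks' lemma: the linear modification along $\mathcal{O}(Q_0)$ can be realized by a $C^1$-small perturbation $g$ of $f$, supported in small pairwise disjoint balls around the orbit, that leaves untouched prescribed compact subsegments of $W^u(Q_0,f)$ and $W^s(Q_0,f)$. Choosing those subsegments large enough to contain a pair of transverse homoclinic intersection points witnessing the relation between $Q_0$ and $P$ ensures that $Q:=Q_0(g)$ remains homoclinically related to $P(g)$ after the perturbation; volume-expansion, index one, and the real positive homothetic eigenstructure on the stable direction are then direct by construction.

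The main obstacle is the precise equalization of the two stable eigenvalues: the machinery of \cite{BDP} easily produces pairs that are arbitrarily close, but a genuine equality $\mu_1=\mu_2$ must be obtained by a further finite-dimensional adjustment. I would pin this down via a one-parameter family of BDP-type ``rotation'' perturbations: absence of dominated splitting along $\mathcal{O}(Q_0)$ allows one to continuously rotate the two smaller eigenspaces through an angle $\theta$, and an intermediate-value argument on the ordered pair of moduli of the resulting eigenvalues (combined with preservation of the log-determinant) forces an exact coincidence at some value of $\theta$; a small additional adjustment then turns any residual complex pair into a genuine positive real double eigenvalue. Reconciling this exact linear adjustment with the manifold-preserving constraints of Gourmelon's lemma, while keeping both the volume-expansion and the homoclinic relation with $P$ intact, is the delicate technical point of the argument.
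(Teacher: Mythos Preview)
Your overall plan---BDP cocycle perturbations plus Gourmelon's Franks' lemma to keep the homoclinic relation---is the right toolkit, and it is essentially what the paper uses. However, there is a genuine gap in the step where you equalize the two stable eigenvalues. An intermediate-value argument along a one-parameter rotation family gives you, at best, a return map on the stable plane whose two eigenvalues have equal modulus; at the coincidence parameter the $2\times 2$ matrix is generically a nontrivial Jordan block (or, just past it, a complex pair), not a scalar. ``Homothety'' means the stable restriction is literally $\mu\,\mathrm{Id}$, which is a codimension-three condition in $GL(2,\mathbb{R})$, and nothing in your sketch forces the off-diagonal part to vanish. Your phrase ``a small additional adjustment then turns any residual complex pair into a genuine positive real double eigenvalue'' does not close this gap: a real double eigenvalue is still compatible with a nonzero nilpotent part, and killing that nilpotent part is not a small perturbation unless you already know it is small---which your IVT argument does not provide. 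In addition, the claim that BDP yields ``three real positive eigenvalues with prescribed ratios'' overstates what those lemmas deliver directly.

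The paper resolves exactly this difficulty by a different mechanism. It first perturbs (via the BDP/rotation path and Gourmelon's lemma, as you propose) to obtain a periodic point $Q$ with \emph{complex} stable eigenvalues and still homoclinically related to $P$. Then, instead of trying to force a homothety by further deforming the cocycle over $\mathcal{O}(Q)$, it uses the transition structure of the hyperbolic basic set containing $Q$: there is a transition word $\tilde T$ that swaps the two stable eigendirections, and an explicit combinatorial product $D_n=\widetilde{\sigma}^{2n}\tilde T\widetilde{\sigma}^{n}\tilde T\widetilde{\sigma}^{n}\tilde T\widetilde{\sigma}^{2n}\tilde T$ is shown by direct computation to act on $E_1\oplus E_2$ as a scalar. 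Franks' lemma then realizes $D_n$ as the differential at a new periodic point $R_n$ shadowing this word, and the horseshoe picture guarantees the homoclinic relation with $P$. This transition trick is what actually produces a homothety rather than merely coincident eigenvalues; it is the missing idea in your proposal.
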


By this proposition together with the following result given by Gourmelon,
we can prove Proposition \ref{pdegtan}.
\begin{lemma}[Theorem 1.1 in section 6 of \cite{GouH}]\label{tgouh}
If the homoclinic class $H(P,f)$ of a saddle point $P$ for $f$ is not trivial and does not 
admit a dominated splitting of the same index as $P$. Then, there is an 
arbitrarily small perturbation $g$ of $f$, that perserves the dynamics on a neighborhood of $P$,
and such that there is a homoclinic tangency associated to $P$.
\end{lemma}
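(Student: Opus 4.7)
The plan is to create the homoclinic tangency first at a ``weaker'' periodic point $Q$ inside $H(P,f)$ and then slide it back to $P$ via the homoclinic structure, keeping all perturbations localized away from a fixed neighborhood of $P$ itself. Set $i := \mathrm{ind}(P)$. Because $H(P,f)$ is non-trivial and admits no dominated splitting of index $i$, the Bonatti--D\'{i}az--Pujals analysis of the periodic linear cocycle over the saddles homoclinically related to $P$ supplies, for every integer $n$, a hyperbolic periodic point $Q_n$ of index $i$ homoclinically related to $P$ whose orbit fails the $n$-domination condition of index $i$; quantitatively, along a long segment of the orbit the weakest contraction rate of the stable direction and the weakest expansion rate of some unstable direction are almost equal.

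With such a $Q_n$ in hand, I would apply the refined version of Franks' lemma due to Gourmelon, which permits a $C^1$-small perturbation of the differential along the orbit of $Q_n$ while leaving untouched both the dynamics on a prescribed neighborhood of $P$ and prescribed long compact pieces of $W^s(P)$, $W^u(P)$, $W^s(Q_n)$, $W^u(Q_n)$. The perturbation is arranged so as to mix the two almost-neutral directions at $Q_n$ into a single two-dimensional real center subspace on which the return map acts as a small rotation composed with a near-identity homothety. A further arbitrarily small localized perturbation at a single non-periodic point of the orbit then rotates the unstable line onto the stable line inside this center plane, producing a homoclinic tangency at $Q_n$. The refinement is essential: it guarantees that after all these local perturbations the transverse homoclinic intersections between $P$ and $Q_n$ survive, so $P$ and $Q_n$ remain homoclinically related.

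To transfer the tangency from $Q_n$ to $P$, pick transverse heteroclinic points $y \in W^u(P) \pitchfork W^s(Q_n)$ and $z \in W^u(Q_n) \pitchfork W^s(P)$ inside the protected compact pieces. By the inclination ($\lambda$-) lemma, forward iterates of a disk in $W^u(P)$ through $y$ $C^1$-accumulate on any prescribed compact piece of $W^u(Q_n)$, in particular on one containing the new tangency point, and backward iterates of a disk in $W^s(P)$ through $z$ accumulate on the corresponding piece of $W^s(Q_n)$. A final arbitrarily $C^1$-small perturbation supported in a small box centred at the tangency, obtained either from the Hayashi connecting lemma or from an elementary push-along-the-approximating-disk argument, upgrades the resulting near-tangency between two accumulated pieces of $W^u(P)$ and $W^s(P)$ into an actual tangency, giving a homoclinic tangency associated to $P$. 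All perturbations are supported away from the prescribed neighborhood of $P$, so the dynamics there are preserved.

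The hard part is the coordination between the three perturbative stages: the Franks perturbation at $Q_n$ must genuinely create a tangency (not merely a small-angle splitting) while simultaneously preserving enough of the compact pieces of $W^s(P)$, $W^u(P)$, $W^s(Q_n)$, and $W^u(Q_n)$ for the $\lambda$-lemma and connecting-lemma stage to go through, and the cumulative $C^1$-size of all perturbations must stay below a prescribed threshold. Gourmelon's refined Franks' lemma exists precisely to permit this simultaneous control, but the bookkeeping---choosing $Q_n$ and the segment of its orbit to perturb, and specifying which compact pieces of invariant manifolds to protect at each stage---is where the real content of the proof lies.
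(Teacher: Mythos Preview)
The paper does not prove this lemma at all: it is quoted verbatim as Theorem~1.1 from Gourmelon's paper \cite{GouH} and used as a black box in the proof of Proposition~\ref{pdegtan}. There is therefore no proof in the paper to compare your proposal against.

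That said, your sketch is a reasonable outline of the actual strategy in \cite{GouH}. A couple of points where your description is loose compared to what is really needed. First, the sentence ``a further arbitrarily small localized perturbation at a single non-periodic point of the orbit then rotates the unstable line onto the stable line inside this center plane, producing a homoclinic tangency at $Q_n$'' conflates two different things: mixing eigendirections of the return map at $Q_n$ and producing an actual non-transverse intersection of $W^s(Q_n)$ and $W^u(Q_n)$. The latter requires a geometric argument (folding a piece of unstable manifold so that it touches the stable one tangentially), not just a linear rotation of a fibre; this is where the bulk of the work in \cite{GouH} sits. Second, in the transfer step you invoke the connecting lemma; in fact the $\lambda$-lemma alone already gives $C^1$-accumulation of disks in $W^u(P)$ and $W^s(P)$ onto the tangency of $Q_n$, and a direct elementary push (not Hayashi's connecting lemma, which does not control tangent directions) is what turns the near-tangency into an actual one. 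Your final paragraph correctly identifies that coordinating the protected pieces of invariant manifolds across the stages is the genuine difficulty, and that Gourmelon's refined Franks' lemma is the tool designed for exactly this.
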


Let us give the proof of Proposition \ref{pdegtan} assuming above two results.

\begin{proof}[Proof of Proposition \ref{pdegtan}]
Under the hypothesis of Proposition \ref{pdegtan}, 
Proposition \ref{p1degtan} tells us that we get $f_1$ arbitrarily $C^1$-close to $f$ 
such that $f_1$ has a hyperbolic periodic points $Q(f_1)$ satisfying all the conclusions of Proposition \ref{p1degtan}.
By taking $f_1$ sufficiently close to $f$, we can assume $H(P, f_1)=H(Q, f_1)$ does not admit dominated splittings.
Then, by applying Lemma \ref{tgouh} to $Q(f_1)$ we get $f_2$ arbtirarily close to $f_1$ such that 
$Q(f_1)=Q(f_2)$ exhibits a homoclinic tangency. 
Since the perturbation preserves the local dynamics of $Q(f_1)$,
$df_2^{\mathrm{per}(Q(f_2))}(Q(f_2)) = df_1^{\mathrm{per}(Q(f_1))}(Q(f_1))$ and
$Q(f_2)$ is volume-expanding. Thus we have created a homothetic tangency.
Furthermore, since $P(f_1)$ and $Q(f_1)$ are homoclinically related, 
if we take $f_2$ sufficiently close to $f_1$, we know $P(f_2)$ and $Q(f_2)$
are homoclinically related, too.
Note that we can take $f_2$ arbitrarily close to $f$ because $f_1$ can be found arbitrarily close to $f$.
Now the proof is completed.
\end{proof}
Thus let us concentrate on the proof of Proposition \ref{p1degtan}.
We divide the proof into two lemmas.
\begin{lemma}\label{lcomp}
Let $f \in  \mathrm{Diff}^1(M)$ with $\dim M=3$ 
and $P$ be a volume-expanding hyperbolic periodic point of $f$ 
and $\mathrm{ind}(P)=1$. If $H(P)$ is wild, then $C^1$-arbitrarily close to $f$ one can find 
a $C^1$-diffeomorphism $g$ such that following holds:  
There exists a volume-expanding hyperbolic index-1 periodic point $Q$ such that
$P(g)$ and $Q$ are homoclinically related, and the restriction of $dg^{\mathrm{per}(Q)}(Q) $ 
to the stable direction has two complex eigenvalues.
\end{lemma}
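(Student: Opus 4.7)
The plan is to combine the linear-algebraic perturbation scheme of Bonatti--D\'{i}az--Pujals with the connection-preserving version of Franks' lemma due to Gourmelon. The wildness of $H(P)$ guarantees that the periodic linear cocycle over the saddles homoclinically related to $P$ admits no dominated splitting of any index; in particular, in the natural $2$-dimensional stable subbundle of index-$1$ saddles in $H(P)$ there is no dominated splitting. This is exactly the hypothesis which allows one to deform periodic return maps toward homotheties.

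First I would collect a finite family of index-$1$ saddles $\{P_1,\dots,P_n\}\subset H(P)$ homoclinically related to $P(f)$, together with a system of transition matrices between their local stable/unstable frames. Applying the BDP construction of periodic points with near-homothetic return in a subbundle without dominated splitting, one obtains, for any prescribed $\varepsilon>0$, an abstract perturbation of the periodic linear cocycle producing a new periodic orbit $\mathcal{O}(Q)$ that is linked to $P$ by transitions and whose return map $A|_{E^s(Q)}$ is $\varepsilon$-close to a homothety $\lambda\,\mathrm{Id}_{\mathbb{R}^2}$ with $|\lambda|<1$, while leaving the unstable eigenvalue essentially untouched. Using Gourmelon's version of Franks' lemma, which realizes prescribed cocycle perturbations along a periodic orbit while preserving a chosen finite set of transverse homoclinic/heteroclinic intersections, I would realize this abstract cocycle perturbation as a genuine $C^1$-small perturbation $g_1$ of $f$ in which $P(g_1)$ and $Q(g_1)$ remain homoclinically related.

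A further, arbitrarily small application of Gourmelon's Franks' lemma then composes $A|_{E^s(Q)}$ with a tiny rotation of angle $\theta\neq 0,\pi$, converting the near-homothety into $\lambda\, R_\theta$, whose spectrum is a genuine complex conjugate pair. Hyperbolicity of $Q$, its index, the sign of $\log|J(Q)|$ (and therefore the volume-expanding property), and the homoclinic relation with $P(g)$ are all preserved because each is a $C^1$-open condition on the diffeomorphism and on the derivative at $Q$, and Gourmelon's lemma is precisely designed to preserve the prescribed transverse intersections. The principal obstacle, which is the genuine content imported from BDP, is maintaining the new saddle $Q$ inside the homoclinic class of $P$ throughout the homothety-creating step: this is handled by the transition machinery of \cite{BDP}, which produces and controls the heteroclinic bridges required to place $Q$ in the class of $P$, and these bridges are exactly the kind of intersections that Gourmelon's Franks lemma can carry through the lift from the cocycle perturbation to the diffeomorphism $g$.
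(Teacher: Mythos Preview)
Your plan handles only half of the argument. The inference ``wildness of $H(P)$ $\Rightarrow$ no dominated splitting on the $2$-dimensional stable subbundle'' is not justified and is in fact the crux of the matter. Absence of a dominated splitting on the full $3$-dimensional cocycle $E_1\oplus E_2\oplus E_3$ over the relevant periodic orbits only tells you that $E_1$ is not dominated by $E_2\oplus E_3$; it does \emph{not} follow that $E_1$ fails to be dominated by $E_2$. The paper makes exactly this case split (via the BDP lemma that either $E_1$ is not dominated by $E_2$, or $E_1/E_2$ is not dominated by $E_3/E_2$), and your proposal corresponds only to the first alternative.

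The second alternative is where the real content lies, and it is here that the volume-expanding hypothesis is actually used. One works on the quotient cocycle $E_1/E_2\oplus E_3/E_2$, applies the path-of-cocycles lemma there, and then observes that along this path the smaller eigenvalue $\lambda_m(t)$ moves continuously from $\mu_1$ to $\sqrt{\mu_1\mu_3}$. Because $P$ is volume-expanding and has index~$1$, one has $\mu_1\mu_3>\mu_1\mu_2\mu_3>1>\mu_2$, so by the intermediate value theorem there is a time $t_0$ at which $\lambda_m(t_0)=\mu_2$. Lifting back to the full bundle produces a return map whose stable part has eigenvalue $\mu_2$ with multiplicity two (not a priori diagonalizable), and a further tiny perturbation turns this into a genuine complex pair. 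Your outline never invokes the Jacobian condition, which is a signal that this case has been overlooked. In the first case your rotation-after-homothety idea is fine and matches the paper; but you need to supply the quotient argument for the second case, and it is there that Gourmelon's path-preserving Franks lemma is essential to keep the index fixed (no bifurcation along the path) while still landing on the coincident-eigenvalue configuration.
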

\begin{lemma}\label{lhomo}
Let $f \in  \mathrm{Diff}^1(M)$ with $\dim M=3$ and  $P$ be an index-1 volume-expanding hyperbolic 
periodic point of $f$ and the restriction of $df^{\mathrm{per}(P)}(P)$ have two contracting complex eigenvalues.
If $H(P)$ is non-trivial, then $C^1$-arbitrarily close to $f$ one can find a $C^1$-diffeomorphism $g$
such that following holds: There exists a volume-expanding 
hyperbolic periodic point $Q$ whose index is $1$ such that
$P(g)$ and $Q$ are homoclinically related, $dg^{\mathrm{per}(Q)}(Q)$ has only positive and real eigenvalues,
and the restriction of $dg^{\mathrm{per}(Q) }(Q)$ to 
the stable direction is a homothety.
\end{lemma}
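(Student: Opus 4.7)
The plan is to exploit the rotation provided by the complex stable eigenvalues of $P$, combined with Gourmelon's manifold-preserving version of Franks' lemma, applied along a long periodic orbit inside $H(P)$. Write the stable eigenvalues of $L:=df^{\mathrm{per}(P)}(P)$ as $\rho e^{\pm i\theta}$ with $0<\rho<1$ and $\theta\in(0,\pi)$, and let $\mu\in\mathbb{R}$, $|\mu|>1$, be the unstable eigenvalue, so that volume-expansion reads $\rho^2|\mu|>1$; in a suitable basis, $L|_{E^s(P)}$ is the matrix $\rho R_\theta$. Since $H(P)$ is non-trivial, $P$ has a transverse homoclinic orbit, and Smale's theorem then produces, for every large $n$, a hyperbolic index-one periodic point $Q_n\in H(P)$ homoclinically related to $P$, whose orbit shadows $n$ copies of the $P$-loop followed by one fixed homoclinic excursion of length $N_0$. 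For large $n$, $Q_n$ is volume-expanding (since $|J(Q_n)|\approx|J(P)|^n\to\infty$), and its return map is well approximated by $T\circ L^n$, where $T$ is the fixed derivative of the excursion. The restriction of this return map to the stable direction is then a $2\times 2$ matrix of norm $\asymp\rho^n$.

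The central step is to apply Gourmelon's manifold-preserving Franks' lemma along the long orbit of $Q_n$. Let $M$ denote the current stable return map at $Q_n$ and set $\rho':=\sqrt{|\det M|}>0$; I will replace $M$ by the homothety $M':=\rho'\mathrm{Id}$. Writing the multiplicative correction $N:=M^{-1}M'$ as the $K$-fold product of $N^{1/K}$, with $K:=\mathrm{per}(Q_n)\asymp n$, and inserting one copy of $N^{1/K}$ at each step along the orbit of $Q_n$, every pointwise perturbation of $df$ is of size $O(\|\log N\|/K)=O(1/n)$, hence can be made arbitrarily $C^1$-small. Gourmelon's version of Franks' lemma guarantees that the transverse intersections $W^u(P)\pitchfork W^s(Q_n)$ and $W^s(P)\pitchfork W^u(Q_n)$ responsible for the homoclinic relation are preserved, so after perturbation $P(g)$ and $Q:=Q_n$ are still homoclinically related, and the stable restriction of $dg^{\mathrm{per}(Q)}(Q)$ is the homothety $\rho'\mathrm{Id}$ with positive real eigenvalue $\rho'$.

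It remains to make the unstable eigenvalue positive real. This is achieved either by choosing the parity of $n$ (so that $\mu^n$ combined with the unstable part of $T$ yields the right sign) or, if necessary, by a further small local Franks' perturbation at one orbit point that alters only the one-dimensional unstable direction, without disturbing the stable homothety. Finally, one checks all the remaining conclusions for $Q$: index one, volume-expansion from $|J(Q)|\approx|J(P)|^n>1$, homoclinic relation with $P(g)$, positive real eigenvalues of $dg^{\mathrm{per}(Q)}(Q)$, and stable homothety.

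The main obstacle I anticipate is the quantitative control in the Franks' lemma step: we must simultaneously (a) turn a generic $2\times 2$ matrix of norm $\asymp\rho^n$ into a prescribed positive multiple of the identity, (b) preserve the transverse intersections underlying the homoclinic relation between $P$ and $Q_n$, and (c) keep the overall $C^1$-perturbation small. Point (a) forces the BDP-style idea of spreading the correction uniformly over the orbit, which is only possible because $\mathrm{per}(Q_n)\to\infty$ through the shadowing construction; point (b) is exactly what Gourmelon's refinement provides; point (c) then follows from the $O(1/n)$ estimate above. This is the combination of linear algebra and Franks' lemma announced in the outline of Section \ref{cretan}.
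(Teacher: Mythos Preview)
Your central step contains a genuine gap. When you write $N=M^{-1}M'=(N^{1/K})^K$ and ``insert one copy of $N^{1/K}$ at each step along the orbit'', the resulting return map is
\[
A_{K-1}\,N^{1/K}\,A_{K-2}\,N^{1/K}\cdots A_0\,N^{1/K},
\]
and this equals $M\cdot N=M'$ only if $N^{1/K}$ commutes with every $A_j$. Here $N\approx\sqrt{|\det T|}\,R_{-n\theta}\,T^{-1}$, where $T$ is the fixed $2\times 2$ derivative of the excursion; since $T$ is a generic matrix (not conformal), $N^{1/K}$ does not commute with the $A_j$, and your spread perturbation does not produce the homothety $M'$. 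One might try the conjugated version $A'_j=A_j\cdot P_j N^{1/K}P_j^{-1}$ with $P_j=A_{j-1}\cdots A_0$, which does give the correct product; but then the size of the pointwise perturbation is controlled by $\|P_j(\log N)P_j^{-1}\|$, i.e.\ by the condition numbers of the partial products $P_j$. You would need to argue that these stay bounded along the whole orbit of $Q_n$, which is a nontrivial statement about the stable cocycle (approximate conformality near $P$ plus control during the excursion) that you neither state nor prove. Without it, the $O(1/n)$ estimate is unjustified.

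The paper avoids this difficulty entirely by a different, more algebraic route. Using the BDP machinery of periodic cocycles with transitions (Lemma~\ref{lmix}, which is Lemma~5.4 of \cite{BDP}), it first passes to a periodic point $Q$ in a fixed horseshoe whose perturbed return map $\tilde\sigma$ has three distinct positive real eigenvalues with eigendirections $E_1,E_2,E_3$, together with a transition $\tilde T$ from $Q$ to itself that \emph{swaps} $E_1$ and $E_2$ while fixing $E_3$. One then writes down an explicit word
\[
D_n=\tilde\sigma^{2n}\tilde T\,\tilde\sigma^{n}\tilde T\,\tilde\sigma^{n}\tilde T\,\tilde\sigma^{2n}\tilde T
\]
which, by direct computation, acts as a positive homothety on $E_1\oplus E_2$ and by a positive scalar on $E_3$. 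The transition property guarantees a periodic point $R_n$ whose differential cocycle is already $\varepsilon$-close to the one realizing $D_n$, so the ordinary Franks' lemma (no spreading, no commutation issue) suffices to install $D_n$ exactly. The homoclinic relation with $P$ is obtained not via Gourmelon's lemma but by working inside a fixed hyperbolic neighbourhood of the horseshoe, where any nearby periodic orbit is automatically homoclinically related to $P$. Thus the paper's argument trades your analytic ``spreading'' step, which breaks down on non-commutativity, for a combinatorial construction (the swap transition and the word $D_n$) that produces the homothety for free.
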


Let us prove Proposition \ref{p1degtan} assuming Lemma \ref{lcomp} and \ref{lhomo}.
\begin{proof}[Proof of Proposition \ref{p1degtan}]
Suppose $f$, $P$ are given as is in the hypothesis of Proposition \ref{p1degtan}.
First, by Lemma \ref{lcomp} we get arbitrarily close to $f$ we can find 
$f_1$ such that there exists a volume-expanding hyperbolic periodic point $Q(f_1)$ whose index is $1$ such that
$P(f_1)$ and $Q(f_1)$ are homoclinically related, and the restriction of $df_1^{\mathrm{per}(Q(f_1)) } (Q(f_1))$ 
has two complex eigenvalues.
Second, by applying Lemma \ref{lhomo} to $f_1$ and $Q(f_1)$, we can find $f_2$ in any neighborhood of $f_1$ 
such that there exists a volume-expanding hyperbolic periodic point $R(f_2)$ whose index is $1$,
$Q(f_2)$ and $R(f_2)$ are homoclinically related and the restriction of $df_2^{\mathrm{per}(R(f_2))}(R(f_2)) $ 
to the stable direction is a homothety, and a positive eigenvalue for unstable direction.
Note that if we take $f_2$ sufficiently close to $f_1$, then $P(f_2)$ and $Q(f_2)$ remain homoclinically related 
and thus $P(f_2)$ and $R(f_2)$ are homoclinically related, too. 
Since $f_1$ can be constructed arbitrarily close to $f$ and
$f_2$ can be constructed arbitrarily close to $f_1$,
$f_2$ can be constructed arbitrarily close to $f$. This ends the proof of our proposition.
\end{proof}

In subsection \ref{ssprf-cmp}, we prepare some techniques for the proof of 
Lemma \ref{lcomp}. 
In subsection \ref{ssprf-c-h}, we prove Lemma \ref{lhomo}
and in subsection \ref{ssprf-path} 
we give the proof of Lemma \ref{lpath} that is needed to prove Lemma \ref{lcomp}

\subsection{Proof of Lemma \ref{lcomp}}\label{ssprf-cmp}
In this subsection, we prove Lemma \ref{lcomp} assuming some results.
First, we collect some results for the the proof of Lemma \ref{lcomp}.  

The first one is the Franks' lemma (see appendix A of \cite{BDV}).
\begin{lemma}[Franks' lemma]\label{Franks}
Let $f$ be a $C^1$-diffeomorphism defined on a closed manifold $M$
and consider any $\delta > 0$. Then there is $\varepsilon > 0$
such that, given any finite set $\Sigma \subset M$, any neighborhood $U$ of $\Sigma$,
and any linear maps $A_x : T_xM \to T_{f(x)}M \,  (x \in \Sigma)$ such that 
$A_x$ is $\varepsilon$-close to $df(x)$, 
there exists a $C^1$-diffeomorphism $g$ that is $\delta$-close to $f$ in
the $C^1$-topology, coinciding with $f$ on
$M \setminus U$ and on $\Sigma$, and $dg(x) =A_x$ for all $x \in \Sigma$.
\end{lemma}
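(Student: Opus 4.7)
The plan is to prove Franks' lemma by the standard local bump-function argument carried out independently near each point of the finite set $\Sigma$, then glued together. Since $\Sigma$ is finite, I can shrink $U$ if necessary and choose pairwise disjoint coordinate charts around each $x \in \Sigma$ (and around each $f(x)$) contained in $U$ and in the image of $U$, respectively. In such charts I identify a neighborhood of $x$ with a neighborhood of $0 \in \mathbb{R}^{\dim M}$ (and similarly for $f(x)$), and write $f(y) = df(x)\cdot y + R(y)$ with $R(0)=0$ and $dR(0)=0$, so that $\|dR\|$ is uniformly small on a ball $B_r(0)$ once $r$ is small.

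Next I would fix, once and for all, a smooth bump $\phi \colon \mathbb{R}^{\dim M} \to [0,1]$ equal to $1$ on $B_{1/2}(0)$ and supported in $B_1(0)$, and set $C_0 := \|\phi\|_{C^1}$. For a scale $r>0$ to be chosen, let $\phi_r(y) := \phi(y/r)$, so $\|d\phi_r\|_\infty \leq C_0/r$. The local perturbation at $x$ is then defined by
\[
g(y) := f(y) + \phi_r(y)\,(A_x - df(x))\,y,
\]
extended by $f$ outside $B_r(0)$. By construction, $g = f$ outside the support of $\phi_r$ and on $\Sigma$, $g(x) = f(x)$, and $dg(x) = df(x) + (A_x - df(x)) = A_x$, so the three algebraic conditions required by the lemma are automatic.

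The main obstacle, and the only real content of the proof, is to bound the $C^1$-distance from $g$ to $f$ uniformly in $r$. Differentiating the added term gives
\[
d\phi_r(y)\otimes(A_x - df(x))y \;+\; \phi_r(y)(A_x - df(x)),
\]
whose operator norm on $B_r(0)$ is at most $(C_0/r)\|A_x - df(x)\|\cdot\|y\| + \|A_x - df(x)\| \leq (C_0+1)\|A_x - df(x)\|$, the scale $r$ cancelling against $\|y\|\leq r$ in the first summand. The $C^0$-distance is even smaller, bounded by $r\,\|A_x-df(x)\|$.

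Thus the choice $\varepsilon := \delta/(C_0+1)$ makes every local perturbation $\delta$-close to $f$ in the $C^1$-topology, and since the charts are pairwise disjoint the globally assembled $g$ remains $\delta$-close to $f$. Finally, $r$ is taken small enough that each chart lies inside $U$ and inside its coordinate domain, and so that $dg$ stays everywhere invertible; this last condition is automatic once $\delta$ is smaller than $\inf_{M}\|df(\cdot)^{-1}\|^{-1}$, which one can arrange by shrinking $\delta$ at the outset and recovering the original $\delta$ by rescaling. This produces the desired diffeomorphism $g$ and completes the proof.
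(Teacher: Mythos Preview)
Your proposal is correct and is exactly the classical bump-function proof of Franks' lemma. The paper itself does not supply a proof of this statement: it merely quotes the lemma and refers the reader to appendix~A of \cite{BDV}, so there is nothing in the paper to compare against.

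One small point worth tightening: the statement requires a single $\varepsilon$ that works for \emph{every} finite set $\Sigma$, so the chart-to-manifold transfer constants hidden in your estimate must be uniform over $M$. On a compact manifold this is automatic once you use the exponential map (or a fixed finite atlas with bounded distortion) to produce the local charts, which makes your constant $C_0+1$ genuinely independent of the location of the points. With that understood, your choice $\varepsilon=\delta/(C_0+1)$ is legitimate, and the remaining remarks about invertibility of $dg$ (hence $g\in\mathrm{Diff}^1(M)$, since diffeomorphisms form a $C^1$-open set) are standard.
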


We introduce the Franks' lemma that preserves invariant manifolds \cite{GouF}.
To state it clearly, we prepare some notations.
For a hyperbolic periodic point $X$ of a diffeomorphism $f$, we consider 
the space of linear cocycles over $\mathcal{O}(X)$ 
(remember that $\mathcal{O}(X)$ is the orbit of $X$).
Let us denote this space as $C(X)$, i.e., $C(X)$ is the set of maps
$\sigma \colon TM|_{ \mathcal{O}(X)} \to TM|_{ \mathcal{O}(X)}$
such that for all $i \in \mathbb{Z}$, $\sigma(f^i(X)) = \sigma(f^i(X), \, \cdot \, )$ is a linear isomorphism
from $T_{f^{i}(X)}M$ to $T_{f^{i+1}(X)}M$.
By abuse of notation, we denote the cocycle 
given by the restriction of $df$ to $TM|_{\mathcal{O}(X)}$ by  $df$.

We define a metric on this space as follows.
For $\sigma_1, \sigma_2 \in  C(X)$, the distance betweeen $\sigma_1$ and  $\sigma_2$
(denoted by $\mathrm{dist}(\sigma_1, \sigma_2)$) is defined to be the following:
\[\,
\max \left\{ \max_{x \in \mathcal{O}(X)} ||\sigma_1(x) -\sigma_2(x)||, 
\,\, \max_{x \in \mathcal{O}(X)} ||(\sigma_1(x))^{-1} - (\sigma_2(x) )^{-1} || \, \right\}.
\] 
For $\sigma \in C(X)$ we denote by $\tilde{\sigma}$ the 
first return map of $\sigma$, i.e., the linear endomorphism of $T_XM$ given by
$\sigma(f^{\mathrm{per}(X)-1}(X)) \circ \cdots \circ \sigma(f(X)) \circ \sigma(X)$.
We define the eigenvalues of $\sigma$ by the eigenvalues of $\tilde{\sigma}$.
We say that $\sigma$ is hyperbolic if none of the eigenvalues of $\tilde{\sigma}$ has its absolute value equal to one.
Note that the set of the hyperbolic cocycles forms an open set in $C(X)$.
Let $\gamma(t)$ be a continuous path  in $C(X)$, i.e.,
$\gamma(t)$ is a continuous map from $[0,1]$ to $C(X)$. 
We define the diameter of $\gamma (t)$ 
(denoted by $\mathrm{diam}(\gamma(t)$) ) to be the number 
$\max_{\,0 \leq s,t \leq 1} \mathrm{dist}(\gamma(s), \gamma(t))$.

Let $U$ be a neighborhood of $\mathcal{O}(X)$.
For $x \in W^s(X) \cap (M \setminus U)$, 
let $\alpha (x)$ be the least number
such that $f^{\alpha(x)}(x) \in U$ holds.
We define the {\it stable manifold of $X$ outside $U$} (denoted $W^s_{\mathrm{loc} \setminus U}(X)$) to be the set of points
that never leave $U$ once they enter $U$, more precisely,
\[
W^s_{\mathrm{loc} \setminus U}(X) := \{ x \in W^s(X) \cap (M \setminus U) \, | \,  \forall n \geq \alpha(x), \, f^n(x) \in U\} .
\]
Let $g$ be a diffeomorphism so close to $f$ that we can define the continuation $X(g)$ of $X$ for $g$.
We say that {\it $g$ locally preserves the stable manifold of $f$ outside $U$ }if 
$W^s_{\mathrm{loc} \setminus U}(X,g) \supset W^s_{\mathrm{loc} \setminus U}(X,f)$.
Similarly, we can define the unstable manifold outside $U$, etc.

Now let us state the precise statement of the lemma.
\begin{lemma}[Gourmelon's Franks' lemma, \cite{GouF}]\label{lgou1}
Let $f$ be a $C^1$-diffeomorphism of $M$ and $X$ be a hyperbolic periodic point of $f$.
Suppose that there exists a continuous path $\{ \gamma(t) \mid 0 \leq t \leq 1\}$ in $C(X)$ satisfying the following:
\begin{enumerate}
\item For all $i \in \mathbb{Z}$, $\gamma(0)(f^i(X)) = df(f^i(X))$.
\item The diameter $\mathrm{diam}(\gamma(t))$ is less than $\varepsilon >0$.
\item For all $0 \leq t \leq 1$, $\widetilde{\gamma (t)}$ is hyperbolic (hence the dimensions of stable and unstable spaces are constant).
\end{enumerate}
Then, given neighborhood $U$ of $\mathcal{O}(X)$ 
there exists a $C^1$-diffeomorphism $g$ that is $\varepsilon$-$C^1$-close to $f$ satisfying the following properties:
\begin{enumerate}
\item For all $i$,  $f^i(X)=g^i(X)$.  Especially, $X$ is a periodic point for $g$.
\item As a linear cocycle over $C(X)$, $dg = \gamma(1)$. Especially, $X$ is a hyperbolic periodic point of $g$ and $\mathrm{ind}(X(f)) =\mathrm{ind}(X(g))$.
\item The support of $g$ is contained in $U$, i.e., $\{ x \in M \mid f(x) \neq g(x) \} \subset U$.
\item $g$ locally preserves the stable and unstable manifolds of $X$ outside $U$.
\end{enumerate}
\end{lemma}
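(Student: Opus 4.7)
My plan is to enhance the standard Franks' perturbation of Lemma \ref{Franks} so that it additionally preserves the invariant manifolds of $X$ outside $U$. The standard lemma produces a single-step perturbation with no control over the stable or unstable foliations, so the key idea is to exploit the hypothesis that we are given a continuous \emph{path} $\gamma(t)$ of hyperbolic cocycles along the orbit and to perform the perturbation in many small increments along this path rather than in one jump. Since every $\widetilde{\gamma(t)}$ is hyperbolic, the path stays inside the open set of hyperbolic cocycles over $\mathcal{O}(X)$, and the invariant splittings $E^s_t \oplus E^u_t$ at the orbit points vary continuously in $t$, defining continuous families of local stable and unstable subspaces that we will track.

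Concretely, I would first fix linearizing charts $\varphi_i$ around each $f^i(X)$ together with nested neighborhoods $V'_i \subset V_i \subset U$. Next, subdivide the parameter interval as $0 = t_0 < t_1 < \cdots < t_N = 1$ finely enough that consecutive cocycles $\gamma(t_k)$ and $\gamma(t_{k+1})$ differ by less than a preassigned $\eta \ll \varepsilon / N$. At the $k$-th step I would apply Lemma \ref{Franks} inside each $V_i$, using bump functions read in the chart $\varphi_i$, to produce a diffeomorphism whose derivative at $f^i(X)$ equals $\gamma(t_{k+1})(f^i(X))$, that is affine on $V'_i$, and that agrees with the diffeomorphism built in the previous step off $V_i$. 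After $N$ steps the total $C^1$-distance to $f$ is at most $N \eta < \varepsilon$, and the final diffeomorphism $g$ satisfies $dg = \gamma(1)$ along $\mathcal{O}(X)$ and is supported in $U$, giving properties (1)--(3).

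For property (4), any $x \in W^s_{\mathrm{loc} \setminus U}(X, f)$ enters $U$ only at the iterate $f^{\alpha(x)}(x)$ and stays there thereafter; since $g = f$ off $U$, the $g$-orbit of $x$ coincides with its $f$-orbit until entry, after which it lands in the local stable lamination of $\gamma(0)=df$ inside $V'_i$. By keeping $\eta$ small and the neighborhoods appropriately nested, I would propagate inductively the assertion that the local stable manifold at stage $k+1$ contains the image of that at stage $k$, using the continuity of $E^s_t$ in $t$ together with standard stable manifold theory for hyperbolic periodic orbits; the unstable case follows by applying the same argument to $g^{-1}$. The main obstacle is the simultaneous bookkeeping: the subdivision, the nested neighborhoods, and the bump functions must be chosen so that each small perturbation both realizes the prescribed change of cocycle and preserves the previously built local invariant manifolds, while the accumulated $C^1$-perturbation stays below $\varepsilon$. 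The path hypothesis is essential here, since a single jump between hyperbolic cocycles can deform the stable foliation discontinuously even when staying inside the hyperbolic locus; following the path forces the invariant bundles to be tracked by continuation, and the quantitative implementation of that tracking at the level of diffeomorphisms is the content of the careful argument in \cite{GouF}.
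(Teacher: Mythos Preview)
The paper's proof is a single sentence: ``Apply Theorem 2.1 in \cite{GouF} putting $I = \{ \dim M - \mathrm{ind}(X) \}$ and $J=\{ \mathrm{ind}(X) \}$.'' In other words, the lemma is quoted as a black box from Gourmelon's paper, with only the choice of the index sets specified. You instead try to sketch the mechanism behind that theorem before, in your last sentence, also deferring the ``quantitative implementation'' to \cite{GouF}. So at the level of what is actually \emph{proved}, your proposal and the paper's proof coincide: both invoke \cite{GouF}.

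Regarding the sketch itself, your intuition that the path hypothesis is what allows the invariant bundles $E^s_t \oplus E^u_t$ to be tracked by continuation is correct, and this is indeed the point of the lemma. But the specific scheme you describe has a gap at the crucial step. The standard Franks' lemma (Lemma \ref{Franks}) provides \emph{no} control over invariant manifolds, so merely applying it with a small increment $\eta$ at stage $k$ does not imply that the local stable manifold of the stage-$(k{+}1)$ diffeomorphism contains that of stage $k$: an arbitrarily small Franks perturbation supported near the orbit can still move the local stable lamination by an amount governed by the geometry of the bump functions and the hyperbolicity constants, not by $\eta$ alone. Your inductive assertion ``the local stable manifold at stage $k{+}1$ contains the image of that at stage $k$'' is therefore exactly the statement that needs a nontrivial construction, not a consequence of making $\eta$ small. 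Gourmelon's argument does not iterate the raw Franks' lemma; rather, each elementary perturbation is built as an isotopy designed from the outset to fix the relevant stable and unstable directions, so that the local invariant manifolds are preserved by construction at every stage. Since you yourself point to \cite{GouF} for this, your write-up is not so much incorrect as it is a longer route to the same citation the paper makes in one line.
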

\begin{proof}
Apply Theorem 2.1 in \cite{GouF} putting $I = \{ \dim M - \mathrm{ind}(X) \}$ and $J=\{ \mathrm{ind}(X) \}$.
\end{proof}

As a consequence of Lemma \ref{lgou1}, we can prove the following lemma.
\begin{lemma}\label{lgou2}
In addition to the hypotheses of Lemma \ref{lgou1}, suppose that the following property holds:
\begin{enumerate}
\setcounter{enumi}{3}
\item There is a hyperbolic periodic point $Y$ that is homoclinically related to $X$.
\end{enumerate}
Then, there is a (small) neighborhood $V$ of $\mathcal{O}(X)$ and a $C^1$-diffeomorphism $h$ 
$\varepsilon$-$C^1$-close to $f$ 
satisfying all the conclusions of Lemma \ref{lgou1} and the following property:
\begin{enumerate}
\setcounter{enumi}{4}
\item $X(h)$ is homoclinically related to $Y(h)$.
\end{enumerate}
\end{lemma}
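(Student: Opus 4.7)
The plan is to apply Lemma \ref{lgou1} with the support-neighborhood $U$ shrunk to a smaller open neighborhood $V$ of $\mathcal{O}(X)$ tailored so that preselected witnesses to the homoclinic relation between $X$ and $Y$ lie in the ``outside'' region. Since $X$ and $Y$ are homoclinically related, hypothesis (4) provides transverse intersection points $p \in W^u(X,f) \pitchfork W^s(Y,f)$ and $q \in W^s(X,f) \pitchfork W^u(Y,f)$; the goal is to preserve these particular points, together with their transversality, through the perturbation produced by Lemma \ref{lgou1}.

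First I would pick $V \subset U$ open with $\mathcal{O}(X) \subset V$ and $V \cap \mathcal{O}(Y) = \emptyset$. Because the forward orbit of $p$ tends to $\mathcal{O}(Y)$ (which lies off $V$) while its backward orbit tends to $\mathcal{O}(X)$ (which lies in $V$), replacing $p$ by a sufficiently large forward iterate arranges that $p \notin V$, its entire forward $f$-orbit lies outside $V$, and its backward orbit enters and remains inside $V$ after some time. In the notation of the excerpt, this places $p$ in $W^u_{\mathrm{loc}\setminus V}(X,f)$. A symmetric adjustment, iterating backward instead of forward, does the same for $q$, putting it in $W^s_{\mathrm{loc}\setminus V}(X,f)$ with its entire backward $f$-orbit lying outside $V$.

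Next I would apply Lemma \ref{lgou1} with this $V$ to obtain $h$ with $\mathrm{supp}(h - f) \subset V$ and satisfying conclusions (1)--(4). Since $V$ is disjoint from $\mathcal{O}(Y)$, we have $Y(h) = Y$ with unchanged local invariant manifolds. Clause (4) gives $p \in W^u_{\mathrm{loc}\setminus V}(X,h) \subset W^u(X,h)$; and because $h \equiv f$ along the entire forward orbit of $p$, the sequence $h^n(p)$ still converges to $Y$, so $p \in W^s(Y,h)$. A routine small-neighborhood argument (using $h = f$ on a tubular neighborhood of the forward orbit of $p$ and near $\mathcal{O}(Y)$) shows that near $p$ the submanifolds $W^u(X,h)$ and $W^s(Y,h)$ coincide with $W^u(X,f)$ and $W^s(Y,f)$ as embedded submanifolds of the correct dimensions, so transversality at $p$ is inherited from $f$. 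The symmetric argument handles $q$ and yields $W^s(X,h) \pitchfork W^u(Y,h) \neq \emptyset$. Hence $X(h)$ and $Y(h)$ are homoclinically related, which is the desired conclusion (5).

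The main technical care is in the first step: I need to verify that the chosen witnesses actually satisfy the somewhat delicate membership condition defining $W^{u,s}_{\mathrm{loc}\setminus V}(X,f)$, which forces each orbit to interact with $V$ on only one side of time. The iterate adjustment, combined with the arrangement $V \cap \mathcal{O}(Y) = \emptyset$, is what makes this work; once it is in place, all the persistence assertions follow essentially automatically from clause (4) of Lemma \ref{lgou1} and the fact that $h$ coincides with $f$ away from $V$.
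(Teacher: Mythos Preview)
Your approach is essentially the same as the paper's: the paper also fixes transverse heteroclinic points (called $a$ and $b$ there), together with small discs around them inside the relevant invariant manifolds, then shrinks the support neighborhood of the perturbation so that these discs lie in the ``outside'' region, applies Lemma~\ref{lgou1}, and reads off the homoclinic relation from the preservation clause. Your ``routine small-neighborhood argument'' for transversality plays the role of the paper's discs.

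There is one technical slip in the order of your choices. You fix $V$ first (requiring only $\mathcal{O}(X)\subset V$ and $V\cap\mathcal{O}(Y)=\emptyset$) and then try to place $p$ in $W^u_{\mathrm{loc}\setminus V}(X,f)$ by pushing it forward along its orbit. But membership in $W^u_{\mathrm{loc}\setminus V}(X,f)$ demands that the backward orbit, \emph{from the first moment it enters $V$}, never leave again; merely knowing that the backward orbit eventually settles into $V$ is not enough. For a carelessly chosen $V$ the heteroclinic orbit of $p$ may enter and leave $V$ several times in its finite ``middle'' portion, and no shift along the orbit repairs this. The paper avoids the difficulty by reversing the order: it first fixes the heteroclinic points (and adjusts them by iteration), and only then chooses $V$ small enough that the finitely many middle iterates of both orbits miss $V$. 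With that reordering your argument goes through and matches the paper's.
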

Let us prove Lemma \ref{lgou2} by Lemma \ref{lgou1}.

\begin{proof}
We fix an open neighborhood $W$ of $\mathcal{O}(Y)$ that has 
empty intersection with $\mathcal{O}(X)$.
Take a point $a \in W^s(X) \ti W^u(Y)$. 
Since $a \in W^s(X)$, by replacing $a$ with $f^{n}(a)$ for some $n>0$ if necessary,
we can assume $a \not\in W$ for all $l \geq 0$.  
Furthermore, by shrinking $W$ if necessary and using the fact $a \in W^u(Y)$, 
we can assume the following condition holds: Let $l_0>0$ be the least number that satisfies $f^{-l_0}(x) \in W$.  
Then there exists a small neighborhood $D^u_Y(a)$ of $a$ in $W^u(Y)$ such that 
$f^{-l}(D^u_Y(a)) \subset W$ for all $l \geq l_0$ and 
$f^{-l}(D^u_Y(a)) \cap W = \emptyset$ for all $0\leq l <l_0$.   

Similarly, we take a point $b \in W^s(Y) \ti W^u(X)$ such that 
$b \not\in W$ and the following holds: 
Let $l_1>0$ be the least number that satisfies $f^{l_1}(b) \in W$.  
There exists a small neighborhood $D^s_Y(b)$ of $b$ in $W^s(Y)$ such that 
$f^{l}(D^s_Y(b)) \subset W$ for all $l \geq l_1$ and
$f^{l}(D^s_Y(b)) \cap W = \emptyset$ for all $0\leq l <l_1$.    

Similar argument gives us an open neighborhood $V$ of $\mathcal{O}(X)$ satisfying
the following conditions:
\begin{enumerate}
\item For all $n \geq 0$, $f^{-n}(D^u_Y(a)) \cap V = \emptyset$, especially $a \not\in V$. 
\item For all $n \geq 0$, $f^{n} (D^s_Y(b)) \cap V = \emptyset$, especially $b \not\in V$.
\item Let $k_0>0$ be the least number that satisfies $f^{k_0}(a) \in V$.  
Then there exists a small neighborhood $D^s_X(a)$ of $a$ in $W^s(X)$ such that 
$f^{k}(D^s_X(a)) \subset V$ for all $k \geq k_0$ and $f^{k}(D^s_Y(b)) \cap V = \emptyset$ for all $0 \leq k < k_0$.  
\item Let $k_1>0$ be the least number that satisfies $f^{-k_1}(b) \in V$.  
Then there exists a small neighborhood $D^u_X(b)$ of $b$ in $W^u(X)$ such that 
$f^{-k}(D^s_X(a)) \subset V$ for all $k \geq k_1$  and $f^{-k}(D^s_X(a)) \cap V = \emptyset$ for all $0 \leq  k < k_1$.  
\end{enumerate}
By applying Lemma \ref{lgou1}, we can construct an $\varepsilon$-$C^1$-close perturbation $h$ of $f$
that satisfies all the conclusions of Lemma \ref{lgou1}.
We show that $X(h)$ is homoclinically related to $Y(h)$.
To see this, first we check $a \in W^s(X,h) \ti W^u(Y,h)$. 
Since $f^{-n}(D^u_Y(a)) \cap V = \emptyset$ for all $n \geq 0$ and 
the support of $h$ is contained in $V$,  $D^u_Y(a)$ is contained in $W^u(Y,h)$.
Second, by the condition of $D^s_X(a)$, it is contained in the stable manifold
outside $V$. Thus it is contained in $W^s(X,h)$ and now we know $a\in W^s(X,h) \ti W^u(Y,h)$.
Similarly, we can see  $b\in W^s(Y,h) \ti W^u(X,h)$.
Thus the proof is completed.
\end{proof}

We collect some results about linear algebra on linear cocycles
from \cite{BDP}.

The first one roughly says, in dimension two,
the absence of the domination
implies the creation of complex eigenvalues.
The origin of this type of arguments can be found in \cite{M}.

\begin{lemma}\label{lpath}
For any $K>0$ and $\varepsilon >0$, the following holds: 
Let $(\Sigma, f, E, A)$ be a two dimensional diagonalizable periodic linear 
cocycle with positive eigenvalues. If $(\Sigma, f, E, A)$ is  bounded by $K$ and does not admit dominated splittings,
then there exists a periodic point $X$  and 
a path $\{ \gamma(t) \mid 0 \leq t \leq 1 \}$ in $C(X)$ 
such that the following conditions hold:
\begin{enumerate}
\item $\gamma (0) = A|_{\mathcal{O}(X)}$.
\item $\mathrm{diam}(\gamma (t)) < \varepsilon$. 
\item $\det \widetilde{\gamma(t)} $ is independent of $t$.
\item Let $\lambda_m(t) \leq \lambda_b(t)$ be the absolute value of 
the eigenvalues of $\widetilde{\gamma(t)}$.
Then for $s<t$ we have $\lambda_m(s) \leq \lambda_m(t)$ and
$\lambda_b(s) \geq \lambda_b(t)$.
\item $\widetilde{\gamma(1)} $ has two complex eigenvalues.
\end{enumerate}
\end{lemma}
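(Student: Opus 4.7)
This lemma is a parametric version of the Mañé--Bonatti-Díaz-Pujals eigenvalue-creation lemma for two-dimensional periodic cocycles without domination (see \cite{BDP}, Section 4); my plan is to rerun the construction of \cite{BDP} while keeping track of a continuous parameter, so as to produce a one-parameter family with the claimed monotone eigenvalue behaviour and preserved determinant.

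First, using the absence of a dominated splitting together with boundedness by $K$, I will select a periodic point $X\in\Sigma$ of arbitrarily large period $n$, together with frames along $\mathcal{O}(X)$ adapted to the (positive) eigenvectors of the return map $\widetilde{A}$ transported forward along the orbit. On an orbit whose failure of $N$-domination is sufficiently strong for large $N$, the linear algebra of \cite{BDP} allows one to insert at a well-chosen finite collection of points of the orbit small $SL(2,\mathbb{R})$-perturbations whose composite effect is to multiply the return map, expressed in the eigenbasis of $\widetilde{A}$, by a rotation $R_\phi$ of any prescribed angle $\phi\in[0,\pi/2]$, with per-point size bounded by a $K$-dependent constant times $\phi/n$.

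Parametrising by letting the target angle be $\phi(t)$ for a smooth strictly increasing function $\phi\colon[0,1]\to[0,\pi/2]$ with $\phi(0)=0$ yields a continuous path $\gamma\colon[0,1]\to C(X)$ with $\gamma(0)=A|_{\mathcal{O}(X)}$ and diameter less than $\varepsilon$ once $n$ is chosen large enough, while (3) is immediate from $\det R_\phi=1$. The return map $\widetilde{\gamma(t)}$ is conjugate to $\widetilde{A}\cdot R_{\phi(t)}$, and writing $\widetilde{A}=\mathrm{diag}(\lambda_m(0),\lambda_b(0))$, this product has trace $(\lambda_m(0)+\lambda_b(0))\cos\phi$, determinant $\lambda_m(0)\lambda_b(0)$, and discriminant
\[
\Delta(\phi) = (\lambda_m(0)+\lambda_b(0))^2\cos^2\phi - 4\lambda_m(0)\lambda_b(0),
\]
which strictly decreases from $(\lambda_b(0)-\lambda_m(0))^2\ge 0$ at $\phi=0$ to $-4\lambda_m(0)\lambda_b(0)<0$ at $\phi=\pi/2$, vanishing at a unique $\phi_c$; for $\phi<\phi_c$ the eigenvalues are positive reals with fixed product and decreasing sum, so $\lambda_m$ increases and $\lambda_b$ decreases (condition (4)); at $\phi_c$ they coalesce at $\sqrt{\lambda_m(0)\lambda_b(0)}$; and for $\phi>\phi_c$ they form a complex conjugate pair of common modulus $\sqrt{\lambda_m(0)\lambda_b(0)}$ (condition (5)).

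The main obstacle is locating the right orbit. A naive uniform insertion of $R_{\phi/n}$ at every step of the orbit does not produce $\widetilde{A}\cdot R_\phi$, because rotations do not commute with the diagonal action of the cocycle; the correct inserts at a point $x_j$ are conjugates $T_j R_\theta T_j^{-1}$ by the partial products $T_j=A(x_{j-1})\cdots A(X)$, and they stay small only at points where $T_j$ has bounded condition number. The role of the absence of dominated splittings, via the analysis of \cite{BDP}, is precisely to produce long periodic orbits passing through enough such well-conditioned points for the required cumulative rotation to be distributed among them, keeping each individual insertion below $\varepsilon$.
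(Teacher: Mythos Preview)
Your discriminant analysis for $\widetilde{A}\,R_\phi$ is correct and gives conditions (3)--(5) cleanly once you know the return map really is $\widetilde{A}\,R_{\phi(t)}$. The difficulty is entirely in the part you label ``the main obstacle,'' and your resolution of it does not match what \cite{BDP} actually supplies. The paper's proof does \emph{not} distribute a large rotation over many points of a single orbit. Instead it splits the task into two sublemmas: Lemma~\ref{ld-b} (essentially Lemma~3.4 of \cite{BDP}) uses the absence of domination to find a periodic point $X$ and a path $\gamma_1$ in $C(X)$ along which the angle between the two eigenspaces of $\widetilde{\gamma_1(t)}$ shrinks below any prescribed $\alpha$, while keeping (3) and (4); and Lemma~\ref{lb-c} shows that once the eigenspaces at $X$ form an angle $<\alpha(K,\varepsilon)$, a \emph{single} rotation $R(-\alpha t)$ applied at one point of the orbit (size $\leq CK\alpha<\varepsilon$) produces complex eigenvalues, again with (3) and (4). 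The final path is the concatenation $\gamma_1\ast\gamma_2$.

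Your scheme requires, for each insertion point $x_j$, that the conjugated rotation $T_jR_\theta T_j^{-1}$ be small, i.e.\ that the condition number $\kappa(T_j)$ be bounded independently of $j$ and of the period. Absence of domination controls the \emph{ratio} of the norms of $T_j$ on the two eigendirections along well-chosen orbit segments, but it does not directly bound $\kappa(T_j)$, which also depends on the angle between $T_j e_1$ and $T_j e_2$; and even where the ratio is controlled, you have not argued that the number of such ``good'' indices $j$ grows without bound along a single orbit so that $\theta=\phi/k$ can be made as small as you like. This extra counting/selection argument is genuinely missing from your sketch and is not what Proposition~3.1 or Lemma~3.4 of \cite{BDP} states. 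The paper's two-step route avoids this entirely: the BDP input is used only to make the \emph{angle} small (not to find many well-conditioned partial products), and then one small rotation at a single point finishes the job.
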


We give the proof of this lemma in the next subsection.

The next lemma is used to create complex eigenvalues from a 
linear map which has eigenvalues with multiplicity two.
\begin{lemma}\label{koukousei}
Let $A$ be a linear endomorphism on a two-dimensional normed linear space
such that the eigenvalues of $A$ are 
positive real number $\lambda$ with multiplicity $2$.
Then there exists $B$ arbitrarily close to $A$ such that $B$ has two complex eigenvalues.
\end{lemma}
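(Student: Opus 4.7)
The plan is to argue by direct computation, splitting on the Jordan type of $A$. Since the characteristic polynomial is $(x-\lambda)^2$, either (i) $A$ is diagonalizable, which forces $A = \lambda \, \mathrm{Id}$, or (ii) $A$ is conjugate to a single Jordan block, so in some basis $\{e_1, e_2\}$ its matrix has the form $\begin{pmatrix} \lambda & 1 \\ 0 & \lambda \end{pmatrix}$. In each case I would exhibit an explicit one-parameter family $B_\epsilon$ of perturbations with complex eigenvalues and let $\epsilon \to 0$.

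In case (i), fix any basis and set $B_\epsilon := \lambda \,\mathrm{Id} + \epsilon R$, where $R$ is the map with matrix $\begin{pmatrix} 0 & -1 \\ 1 & 0 \end{pmatrix}$. A direct computation gives characteristic polynomial $(x-\lambda)^2 + \epsilon^2$, whose roots $\lambda \pm i\epsilon$ are non-real for every $\epsilon \neq 0$. In case (ii), working in the Jordan basis, modify the $(2,1)$-entry and let $B_\epsilon$ have matrix $\begin{pmatrix} \lambda & 1 \\ -\epsilon & \lambda \end{pmatrix}$. Its characteristic polynomial $(x-\lambda)^2 + \epsilon$ has the non-real roots $\lambda \pm i\sqrt{\epsilon}$ for every $\epsilon > 0$.

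In both cases $B_\epsilon \to A$ in operator norm as $\epsilon \to 0$; transporting back from the chosen basis to the original one only multiplies the perturbation by a bounded factor depending on the change-of-basis matrix, so $B_\epsilon$ remains arbitrarily close to $A$ in the ambient norm on $L(V)$. There is no genuine obstacle here, as the conclusion is a standard linear algebra fact: the locus of $2\times 2$ real matrices with a double real eigenvalue is a thin set, and a generic perturbation of such a matrix has either two distinct real eigenvalues or a conjugate pair of complex ones—the latter being the case we explicitly exhibit.
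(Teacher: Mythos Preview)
Your proof is correct and follows essentially the same route as the paper: both split on the Jordan type of $A$, and in each case exhibit an explicit perturbation (adding a small off-diagonal entry, or a small rotation in the scalar case) whose characteristic polynomial is $(x-\lambda)^2 + c$ with $c>0$. The only cosmetic difference is that the paper keeps the super-diagonal entry as a general $t$ rather than normalizing it to $1$, and in the diagonalizable case multiplies by a small rotation $R(\alpha)$ instead of adding $\epsilon R$.
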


\begin{proof}
By taking the Jordan cannonical form of $A$, we can assume that $A$ has the following form:
\[
\begin{pmatrix}
\lambda & t \\
0            & \lambda
\end{pmatrix}.
\]
When $t=0$, the matrix
\[
\begin{pmatrix}
\cos \alpha            & - \sin \alpha   \\
\sin \alpha            &   \cos \alpha
\end{pmatrix}\begin{pmatrix}
\lambda & t \\
0            & \lambda
\end{pmatrix}
\]
has two complex eigenvalues for sufficiently small $\alpha >0$ and $\alpha$ can be taken arbitrarily small.
If $t\neq 0$, the matrix
\[
\begin{pmatrix}
\lambda & t \\
-\alpha /t            & \lambda
\end{pmatrix}.
\]
has complex eigenvalue for all $\alpha >0$. 
\end{proof}

The following lemmas enables us to``lift up'' the perturbation on some sub cocycle or 
quotient cocycle to the perturbation on the original cocycle. 
\begin{lemma}\label{llift}
Given $\varepsilon > 0$, a linear cocycle $(\Sigma, f, E, A)$ and
its invariant subcocycle $(\Sigma, f, F, A|_F)$, where $A|_F$ denotes 
the restriction of $A$ to $F$, the following hold:
\begin{enumerate}
\item Let $(\Sigma, f, F, B)$ be a cocycle with $\mathrm{dist}(A|_F, B) < \varepsilon$.
Then there exists a linear cocycle $(\Sigma, f, E, \tilde{B})$ such that 
$\mathrm{dist}(A, \tilde{B}) < \varepsilon$ and $A/F = \tilde{B}/F$,
where $A/F$ denotes the bundle map derives from $A$ on $E/F$.
\item If $(\Sigma, f, E/F, B)$ is a cocycle with $\mathrm{dist}(A/F, B) < \varepsilon$.
Then there exists a linear cocycle $(\Sigma, f, E, \hat{B})$ such that
$\mathrm{dist}(A, \hat{B}) < \varepsilon$, $\hat{B}$ leaves $F$ invariant 
and $A|_F = \hat{B}|_F$.
\end{enumerate}
\end{lemma}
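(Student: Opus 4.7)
The plan is to fix a (not necessarily invariant) orthogonal complement of $F$ in $E$ and carry out the perturbations inside the resulting block decomposition. At each $x \in \Sigma$, let $G(x)$ be the orthogonal complement of $F(x)$ in $E(x)$ with respect to the Riemannian metric on the bundle. Then $G$ is in general not $A$-invariant, but invariance of $F$ forces $A$ to be block upper-triangular with respect to $E(x) = F(x) \oplus G(x)$ and $E(f(x)) = F(f(x)) \oplus G(f(x))$:
\[
A(x) = \begin{pmatrix} A|_F(x) & C(x) \\ 0 & D(x) \end{pmatrix},
\]
where $C(x)\colon G(x)\to F(f(x))$ encodes the ``mixing'' and $D(x)\colon G(x)\to G(f(x))$ realizes the quotient cocycle $A/F$ under the canonical isomorphism $G(x) \simeq E(x)/F(x)$.

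For part (1), given $B$ on $F$ with $\mathrm{dist}(A|_F, B) < \varepsilon$, I would simply define
\[
\tilde{B}(x) := \begin{pmatrix} B(x) & C(x) \\ 0 & D(x) \end{pmatrix}
\]
in the same orthogonal decomposition. By construction $F$ is $\tilde{B}$-invariant with $\tilde{B}|_F = B$, and the induced cocycle on $E/F \simeq G$ is $D = A/F$. The difference $A(x)-\tilde{B}(x)$ has a single nonzero block (the $FF$-entry, equal to $A|_F(x)-B(x)$), so by orthogonality of the decomposition its operator norm equals $\|(A|_F - B)(x)\| < \varepsilon$. Part (2) is symmetric: given a perturbation $B$ of $A/F$ on $E/F$, transport it through the identification $G(x) \simeq E(x)/F(x)$ to a map $\bar B(x)\colon G(x)\to G(f(x))$, and set $\hat{B}$ to agree with $A$ on the $FF$- and $FG$-blocks and to equal $\bar{B}$ on the $GG$-block; then $\hat{B}$ preserves $F$, coincides with $A|_F$ on $F$, and has $\hat{B}/F = B$, while $\|A-\hat{B}\|$ equals $\|A/F - B\|$ by the same orthogonality argument.

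The principal technical point---and the part that deserves careful bookkeeping---is the control of the inverses, because the distance in $C(X)$ also sees $\|\sigma^{-1}-\tau^{-1}\|$. Inverting a block upper-triangular matrix produces an off-diagonal entry of the shape $-A|_F^{-1}\, C\, D^{-1}$ for $A^{-1}$, and the analogous $-B^{-1}\,C\,D^{-1}$ for $\tilde{B}^{-1}$; the difference is $(A|_F^{-1}-B^{-1})\,C\,D^{-1}$, which is controlled by $\|A|_F^{-1}-B^{-1}\|$ times the fixed quantity $\|C\|\cdot\|D^{-1}\|$ coming from $A$. So the inverse error is $O(\mathrm{dist}(A|_F, B))$ with a constant depending only on $\|A\|$ and $\|A^{-1}\|$. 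The statement ``$\mathrm{dist}(A,\tilde{B}) < \varepsilon$'' should be read in the standard perturbative sense of the paper: given $\varepsilon$, one chooses the input perturbation smaller than $\varepsilon$ divided by this a priori constant, and the construction then delivers $\tilde{B}$ at scale $\varepsilon$. Part (2) is handled identically by symmetry, so the whole lemma reduces to this computation in block-triangular form.
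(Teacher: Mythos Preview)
The paper does not prove this lemma itself; it simply cites Lemma~4.1 in \cite{BDP}. Your block--upper-triangular construction via the orthogonal complement $G(x)=F(x)^\perp$ is precisely the standard argument behind that reference, so your approach is the right one and there is nothing substantively different to compare.

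One comment on your final paragraph. You are right that the forward norm of $A-\tilde B$ (resp.\ $A-\hat B$) is \emph{exactly} the norm of the perturbed block, thanks to orthogonality. For the inverse you correctly compute, say in case~(1),
\[
A(x)^{-1}-\tilde B(x)^{-1}
=\begin{pmatrix}
(A|_F)^{-1}-B^{-1} & -\bigl((A|_F)^{-1}-B^{-1}\bigr)CD^{-1}\\
0 & 0
\end{pmatrix},
\]
so $\|A^{-1}-\tilde B^{-1}\|\le \|(A|_F)^{-1}-B^{-1}\|\cdot(1+\|C\|\,\|D^{-1}\|)$, and $\|C\|\le\|A\|$, $\|D^{-1}\|\le\|A^{-1}\|$ since these are compressions in an orthogonal splitting. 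Thus the lift costs a multiplicative constant depending only on the bound $K$ of the cocycle. Your remark that one absorbs this constant by shrinking the input perturbation is exactly how the lemma is used downstream (in the proof of Lemma~\ref{lcomp} one applies it along a path and may take $\varepsilon$ as small as one likes), so this is not a gap but a point worth stating explicitly rather than as a reinterpretation of the inequality.
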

\begin{proof}
See Lemma 4.1 in \cite{BDP}.
\end{proof}

\begin{lemma}\label{lind}
Let $E_1 \oplus E_2 \oplus E_3$ be a splitting of a linear cocycle.
If $E_1$ is not dominated by $E_2 \oplus E_3$, then 
one of the following holds:
\begin{enumerate}
\item $E_1$ is not dominated by $E_2$.
\item $E_1/E_2$ is not dominated by $E_3/E_2$.
\end{enumerate}
\end{lemma}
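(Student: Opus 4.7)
The plan is to prove the contrapositive: assuming (1$'$) $E_1$ is dominated by $E_2$ in the subcocycle on $E_1\oplus E_2$ and (2$'$) $E_1/E_2$ is dominated by $E_3/E_2$ in the quotient cocycle on $E/E_2$, I will show that $E_1$ is dominated by $E_2\oplus E_3$.

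First I would rewrite both domination hypotheses in their standard exponential form: $n$-domination for a bounded cocycle is equivalent to the existence of constants $C>0$ and $\mu\in(0,1)$ such that
\[
\frac{\|A^n|_{E_1}\|}{m(A^n|_{E_2})} \le C\mu^n \quad \text{and} \quad \frac{\|A^n|_{E_1}\|}{m(A^n|_{E_3})} \le C\mu^n
\]
for all $n\ge 0$, where $m(\cdot)$ denotes the conorm. The second estimate uses the key observation that, since all three $E_i$ are $A$-invariant, the quotient cocycle $A/E_2$ on $E/E_2$ is canonically conjugate to the direct-sum cocycle $A|_{E_1}\oplus A|_{E_3}$, and the quotient norm is uniformly equivalent to the natural norm on $E_1\oplus E_3$ (cocycle bounded by $K$, continuous bundles, bounded angles).

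Second, I would bound $m(A^n|_{E_2\oplus E_3})$ from below. For a unit $v=v_2+v_3\in E_2\oplus E_3$, the triangle inequality gives $\max(\|v_2\|,\|v_3\|)\ge 1/2$. In the case $\|v_2\|\ge 1/2$, letting $C_\pi$ be an upper bound for the operator norm of the projection $E_2\oplus E_3\to E_2$ (again using the uniform lower bound on angles),
\[
\|A^n v\| \ge C_\pi^{-1}\,\|A^n v_2\| \ge (2C_\pi)^{-1}\, m(A^n|_{E_2}),
\]
and the case $\|v_3\|\ge 1/2$ is symmetric. Hence
\[
m(A^n|_{E_2\oplus E_3}) \ge (2C_\pi)^{-1}\min\{m(A^n|_{E_2}),\, m(A^n|_{E_3})\}.
\]
Combining with the first step,
\[
\frac{\|A^n|_{E_1}\|}{m(A^n|_{E_2\oplus E_3})} \le 2C_\pi\max\!\left\{\frac{\|A^n|_{E_1}\|}{m(A^n|_{E_2})},\frac{\|A^n|_{E_1}\|}{m(A^n|_{E_3})}\right\} \le 2C_\pi C\mu^n,
\]
which is smaller than $1/2$ for large $n$, yielding the desired $n$-domination of $E_1$ by $E_2\oplus E_3$.

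I expect the main obstacle to be the careful identification of the quotient cocycle $A/E_2$ with the direct-sum cocycle $A|_{E_1}\oplus A|_{E_3}$ together with the uniform equivalence of the quotient and induced norms; this is what allows hypothesis (2$'$) to be converted to an intrinsic estimate on $\|A^n|_{E_1}\|$ and $m(A^n|_{E_3})$. Once this conversion is granted, the remainder is a two-case linear-algebra computation and an exponential-growth comparison, both routine.
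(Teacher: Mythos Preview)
The paper gives no proof here; it merely cites Lemma~4.4 of \cite{BDP}. Your contrapositive argument is correct under the uniformity you invoke (a bounded cocycle with the angles between the $E_i$ bounded below, so that the quotient norm on $E/E_2$ is uniformly comparable to the intrinsic norm on $E_1\oplus E_3$ and the oblique projections $E_2\oplus E_3\to E_i$ have bounded norm $C_\pi$). Once those constants are in hand, your two-case estimate for $m(A^n|_{E_2\oplus E_3})$ and the exponential comparison are routine and correct.

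You have already singled out the norm comparison between the quotient cocycle and $A|_{E_1}\oplus A|_{E_3}$ as the main obstacle, and that is indeed the only delicate point. One caution: in the paper's own application the splitting is into eigenspaces of periodic orbits in $\mathrm{per}_{+,\mathbb{R}}(H(P))$, and for such a splitting a uniform lower bound on the angles between the $E_i$ is not automatic. Boundedness of the cocycle does constrain this (a small angle between two invariant lines forces their eigenvalues to be close, which in turn keeps $m(A^n|_{E_2\oplus E_3})$ from degenerating), so the gap can be closed within your framework; alternatively one defers to the argument in \cite{BDP} at precisely this step.
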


\begin{proof}
See Lemma 4.4 in \cite{BDP}.
\end{proof}

We prepare some lemmas which enables us to reduce our problem to specific sets. 
For a homoclinic class $H(P)$, by $\mathrm{per}_{+, \mathbb{R}}(H(P))$ 
we denote  the set of the 
volume-expanding hyperbolic periodic points that have the differentials with distinct positive real eigenvalues 
and are homolcinically related to $P$.

\begin{lemma}\label{ldense1}
Given $f \in \mathrm{Diff}^1(M)$ and a volume-expanding hyperbolic periodic point $P$,
if $H(P)$ is non-trivial, one can find $g\in \mathrm{Diff}^1(M)$ arbitrarily $C^1$-close to $f$ such that
$\mathrm{per}_{+,\mathbb{R}}(H(P,g))$ is dense in $H(P,g)$.
\end{lemma}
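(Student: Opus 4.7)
The plan is to perform countably many disjointly supported Franks--Gourmelon perturbations, one per element of a prescribed dense family of hyperbolic periodic points of $H(P,f)$, each producing in its own small neighborhood a volume-expanding hyperbolic periodic point with distinct positive real eigenvalues that remains homoclinically related to $P$.

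By Smale's theorem applied to $f$, the hyperbolic periodic points homoclinically related to $P$ are dense in $H(P,f)$; fix such a countable dense collection $\{Q_n\}_{n\geq 1}$. For each $n$, since $Q_n$ and $P$ are homoclinically related they are contained in a transitive hyperbolic basic set (a horseshoe) $\Lambda_n$, whose symbolic dynamics contains periodic orbits of every admissible itinerary. I would choose $\hat Q_n\in\Lambda_n$ whose orbit passes within distance $1/n$ of $Q_n$ and spends $k_n$ consecutive iterates shadowing $\mathcal O(P)$, with $k_n$ (and hence the total period $p_n$) taken large. Since $|J(P)|>1$ and the heteroclinic transitions contribute only a uniformly bounded multiplicative factor to the Jacobian of $\hat Q_n$, this yields $|J(\hat Q_n)|>1$.

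Next, for each $n$ I would apply Lemma~\ref{lgou2} with $Y=P$, along a continuous path $\gamma_n(t)$ in $C(\hat Q_n)$ that starts at $df|_{\mathcal O(\hat Q_n)}$, stays hyperbolic and volume-expanding, and ends at a cocycle whose return map has three distinct positive real eigenvalues. Such a path exists inside the connected group $\mathrm{GL}^+$ of orientation-preserving invertible matrices (passing, if necessary, through complex eigenvalue pairs so as to flip signs of negative real eigenvalues without crossing zero, and then splitting any resulting repeated real eigenvalue by an arbitrarily small transvection). By spreading this homotopy uniformly over all $p_n$ steps of the orbit, the per-point perturbation of $df$ is of size $O(1/p_n)$; choosing $p_n$ large enough keeps the total $C^1$-size of the perturbation below $\varepsilon/2^n$, supported on a small neighborhood $V_n$ of $\mathcal O(\hat Q_n)$, chosen pairwise disjoint and disjoint from $\mathcal O(P)$. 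The resulting diffeomorphism $g$ is then within $C^1$-distance $\varepsilon$ of $f$, and by Lemma~\ref{lgou2} each $\hat Q_n(g)$ remains homoclinically related to $P(g)$, placing it in $\mathrm{per}_{+,\mathbb R}(H(P,g))$.

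The main obstacle is establishing density in $H(P,g)$ rather than in $H(P,f)$. Since each $\mathcal O(\hat Q_n(g))$ contains a point within $2/n$ of $Q_n$ and $\{Q_n\}$ is dense in $H(P,f)$, the closure of $\bigcup_n\mathcal O(\hat Q_n(g))$ contains $H(P,f)$. To cover the possible extra points of $H(P,g)\setminus H(P,f)$, one applies Smale's theorem to $g$: any point of $H(P,g)$ is approximated by hyperbolic periodic points of $g$ homoclinically related to $P(g)$, and if the perturbation sizes are chosen small enough these can be tracked back to hyperbolic periodic points of $f$ homoclinically related to $P$, hence to some $Q_m$, hence to $\hat Q_m(g)$, completing the density argument.
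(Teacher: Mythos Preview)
Your approach has two substantive gaps.

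First, the hyperbolic path $\gamma_n(t)$ required by Lemma~\ref{lgou2} need not exist. You assert it can be found in $\mathrm{GL}^+$, but nothing in your choice of $\hat Q_n$ forces $\det\big(df^{p_n}(\hat Q_n)\big)>0$; when the determinant is negative, no path in $\mathrm{GL}(3,\mathbb R)$ reaches a matrix with three positive eigenvalues at all. Even with positive determinant, consider the index-$1$ configuration with a negative expanding eigenvalue $\mu<-1$ and two real contracting eigenvalues of opposite signs (compatible with $\det>0$). To make $\mu$ positive you must pair it with a contracting eigenvalue into a complex-conjugate pair; at that instant both share a common modulus, which by continuity must pass through $1$, destroying the hyperbolicity that Lemma~\ref{lgou2} demands along the whole path. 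The ``spreading over $p_n$ steps'' claim is also unjustified: a homotopy of the \emph{return map} does not automatically yield a cocycle path of diameter $O(1/p_n)$, because the non-commutativity of the factors obstructs naive distribution. The standard cure, implicit in the references the paper cites, is to choose the itinerary of $\hat Q_n$ so that its return map is approximately a \emph{square} (doubling a loop in the horseshoe), forcing the eigenvalues to be non-negative real before any perturbation and reducing the needed path to an arbitrarily small one.

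Second, and more seriously, the density argument does not close. Your $\hat Q_n(g)$ are dense in $H(P,f)$, not in $H(P,g)$, and the latter can be strictly larger. The proposed fix---given a periodic point $R$ of $g$ in $H(P,g)$, ``track it back'' to a periodic point of $f$---fails because the implicit-function continuation of a hyperbolic periodic point requires the perturbation to be small relative to the hyperbolicity constants of $R$, and $R$ may have arbitrarily large period and correspondingly weak hyperbolicity; since $g$ is already fixed you cannot shrink the perturbation after the fact. The paper simply invokes \cite{ABCDW} and \cite{BDP}, whose underlying mechanism is a Baire-category scheme: for each open $U\subset M$ one shows that the set of $g$ with either $H(P,g)\cap U=\emptyset$ or $\mathrm{per}_{+,\mathbb R}(H(P,g))\cap U\neq\emptyset$ is open and dense, then intersects over a countable basis. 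This yields a residual, hence dense, set of $g$ with the desired property without ever comparing $H(P,f)$ to $H(P,g)$.
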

\begin{proof} 
See Proposition 2.3 in \cite{ABCDW} and Remark 4.17 in \cite{BDP}.
\end{proof}

\begin{lemma}\label{ldense2}
Let $f \in \mathrm{Diff}^1(M)$ and $P$ be a hyperbolic periodic point of $f$. 
Suppose $H(P)$ does not admit dominated splittings.
Then any dense $f$-invariant subset $\Sigma \subset H(P)$ 
does not admit dominated splittings.
\end{lemma}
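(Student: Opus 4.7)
The plan is to argue by contraposition: assume the dense $f$-invariant subset $\Sigma \subset H(P)$ admits an $n$-dominated splitting $TM|_\Sigma = F \oplus G$, and show that this splitting extends to a dominated splitting of $TM|_{H(P)}$, contradicting the hypothesis.

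First I extend $F$ and $G$ continuously to $\overline{\Sigma}=H(P)$ (using $H(P)$ closed). For $x \in H(P)$, pick $x_k \in \Sigma$ with $x_k \to x$. The Grassmannians of $T_{x_k}M$ being compact and the dimensions of $F$ and $G$ being constant on $\Sigma$, after extracting a subsequence the planes $F(x_k),G(x_k)$ converge to subspaces $\tilde F(x),\tilde G(x) \subset T_xM$ of the correct dimensions. The inequality $\|df^n|_F\|\cdot\|df^{-n}|_G\|<1/2$ on $\Sigma$ forces a uniform lower bound on the angle between $F$ and $G$, so transversality passes to the limit and $T_xM=\tilde F(x)\oplus\tilde G(x)$; the $df$-invariance is also preserved. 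The standard uniqueness of a $df$-invariant splitting with given dimensions satisfying the domination inequality along an orbit then implies that the limit subspaces do not depend on the subsequence nor on the choice of approximating points, so $\tilde F,\tilde G$ are well-defined continuous $df$-invariant subbundles on $H(P)$.

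Second, passing to the limit in the domination inequality yields the weak form
\[
\|df^n(x)|_{\tilde F}\|\cdot \|df^{-n}(f^n(x))|_{\tilde G}\| \leq 1/2 \qquad \text{for every } x\in H(P).
\]
This is already enough, because the chain rule gives
\[
\|df^{2n}(x)|_{\tilde F}\|\cdot\|df^{-2n}(f^{2n}(x))|_{\tilde G}\| \leq \bigl(\|df^n(x)|_{\tilde F}\|\,\|df^{-n}(f^n(x))|_{\tilde G}\|\bigr)\bigl(\|df^n(f^n(x))|_{\tilde F}\|\,\|df^{-n}(f^{2n}(x))|_{\tilde G}\|\bigr) \leq \tfrac{1}{4} < \tfrac{1}{2},
\]
so $\tilde F\oplus\tilde G$ is a $(2n)$-dominated splitting of $TM|_{H(P)}$, contradicting the hypothesis that $H(P)$ admits no dominated splitting.

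The main obstacle is the first step: the continuous extension of the splitting to the closure, in particular the uniqueness of the limit subspaces $\tilde F(x),\tilde G(x)$, which relies on the uniform transversality and uniqueness of dominated splittings with prescribed dimensions. Once this standard ``closure of a dominated splitting is dominated'' fact is in hand, the chain-rule upgrade from $\leq 1/2$ at step $n$ to $<1/2$ at step $2n$ is completely routine.
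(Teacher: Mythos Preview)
Your argument is correct and is exactly the standard proof that a dominated splitting over an invariant set extends to its closure; the paper does not give its own proof of this lemma but simply refers to Lemma~1.4 of \cite{BDP}, which carries out essentially the same Grassmannian-limit and uniqueness argument you sketch.
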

\begin{proof} 
See Lemma 1.4 in \cite{BDP}.
\end{proof}

Let us start from the proof of Lemma \ref{lcomp}.
\begin{proof}[Proof of Lemma \ref{lcomp}]
By the quotations we prepared, large part of our proof is already finished. 
Let us see how to combine them to give the proof.

Suppose that $f$ and $P$ are given as in the hypothethis of Lemma \ref{lcomp}.
Lemma \ref{ldense1} implies there exists $f_1$ arbitrarily $C^1$-close to $f$ such that 
$\mathrm{per}_{+,\mathbb{R}}(H(P,f_1))$ is dense in $H(P,f_1)$.
Let us consider the periodic linear cocycle derives from $df_1$ 
on $\mathrm{per}_{+,\mathbb{R}}(H(P,f_1))$.
Since each periodic point has only real and positive  distinct eigenvalues, 
we can find a splitting $E_1 \oplus E_2 \oplus E_3$ on this cocycle 
such that corresponidng eigenvalues are in the increasing order at each point.
Note that since $H(P,f_1)$ does not admit dominated splitting, 
by Lemma \ref{ldense2},
$E_1$ is not dominated by $(E_2 \oplus E_3)$.
Then Lemma \ref{lind} says that either $E_1$ is not dominated by $E_2$ 
or $E_1 / E_2$ is not dominated by $E_3/E_2$.
We show that we can create the periodic point we claimed in both cases.

Let us consider the first case, where $E_1$ is not dominated by $E_2$.
We fix $\varepsilon >0$.
Since $M$ is compact, the linear cocycle $df_1$ restricted to $E_1 \oplus E_2$ is bounded. 
By applying Lemma \ref{lpath} to this cocycle, 
we get a periodic point $Q \in \mathrm{per}_{+,\mathbb{R}}(H(P,f_1))$ and a path $\gamma(t)$
satisfying the conclusions in Lemma \ref{lpath}.  
We can assume $Q \neq P$ by letting $\varepsilon$ sufficiently small.
Since $\lambda_m(t)\leq \lambda_b(t) \leq \lambda_b(0) < 1$,
there is no index bifurcation during the perturbation.

Then Lemma \ref{llift} tells us there exists a path $\Gamma(t) \subset C(Q)$ 
such that $X=Q$, $Y=P$ and $\Gamma(t)$ satisfies all the hypotheses of Lemma \ref{lgou2}. 
Hence by applying Lemma \ref{lgou2} we get a  $C^1$-diffeomorphism $f_2$
that is $\varepsilon$-$C^1$-close to $f_1$ such that $P(f_2)$ and $Q(f_2)$
satisfy all the conditions we need. Since $f_1$ can be arbitrarily close to $f$
and $\varepsilon$ can be taken arbitrarily small, the proof is completed in this case. 

Let us see the case where $E_1 / E_2$ is not dominated by $E_3/E_2$. 
Take $\varepsilon >0$.
Since $df$ is bounded, the cocycle induced on the quotient bundle 
$E_1 / E_2 \oplus E_3/E_2$ is also bounded. 
By applying Lemma \ref{lpath} to this cocycle, 
we obtain a periodic point $Q \in \mathrm{per}_{+,\mathbb{R}}(H(P,f_1))$, 
a path $\gamma(t)$  
with $\mathrm{diam}(\gamma(t)) < \varepsilon$ such that they satifsy 
all conclusions of the Lemma \ref{lpath}.  We claim that there is $t_0 \in (0,1)$ such that $\lambda_m(t_0)$
(not bigger eigenvalue of $\widetilde{\gamma (t_0)}$) is equal to the 
eigenvalue of $df_1^{\mathrm{per}(Q)}|_{E_2(Q)}$.
Let us denote the eigenvalues of $df_1^{\mathrm{per}(Q)}(Q)$ for the $E_i$-direction by $\mu_i$ ($i=1, 2, 3$).
Since $Q$ is volume-expanding,  $\mu_1 \mu_2 \mu_3 >1$, 
and since $Q$ has index $1$, $\mu_2 <1$. 
Hence, we have $\mu_1 \mu_3 > \mu_1 \mu_2 \mu_3 >1$.

Note that $\lambda_m(0) = \mu_1 < \mu_2$. 
When $t=1$, by Lemma \ref{lpath}, we have $\lambda_m(1) = \lambda_b(1)$.
Since $\det \widetilde{\gamma(t)}$ is independent of $t$ and $\mu_1 \mu_3 >1$,
we get $1< \mu_1 \mu_3 = \lambda_m(1) \lambda_b(1) = \left({}\lambda_m(1)\right)^2$. 
Then we can see $\lambda_m(1)>1$ because $\lambda_m(1)$ is positive. 
Finally, by $\lambda_m(0) = \mu_1 < \mu_2 <1 < \lambda_m(1)$ and the continuity of $\gamma(t)$, 
there is $t_0 \in (0, 1)$ such that $\lambda_m(t_0) =\mu_2$.
Then, we redefine $\gamma(t)$ as follows: $\gamma(t)$ is equal to the original $\gamma(t)$
when $t \in [0, t_0]$, otherwise $\gamma(t) = \gamma(t_0)$.
For this modified path $\gamma(t)$, we have
$\lambda_b(t) \geq \lambda_b(t_0) = \mu_1 \mu_3 / \lambda_m(t_0)  = \mu_1 \mu_3 / \mu_2  > \mu_1 \mu_3 >1$  for all $t$.

By Lemma \ref{llift} we can find a path $\Gamma(t) \subset C(X)$
such that $X=Q$, $Y=P$ and $\Gamma(t)$ satisfy all the hypotheses of Lemma $\ref{lgou2}$.
Applying Lemma $\ref{lgou2}$, we obtain a $C^1$-diffeomorphism $f_2$
that is $\varepsilon$-$C^1$-close to $f_1$ such that
$Q(f_2)$ are homoclinically related to $P(f_2)$ and
the differential $df_2^{\mathrm{per}(Q(f_2))}(Q(f_2))$ 
restricted to the stable direction of $T_{Q(f_2)}M$
has the eigenvalue $\mu_2$ with multiplicity $2$. Now Lemma \ref{koukousei} 
gives $f_3$ which is arbitrarily $C^1$-close to $f_2$ such that $P(f_3)$ and $Q(f_3)$ satisfy all the conditions we need.
\end{proof}

\begin{remark}
We point out two mistakable arguments in the proof of second case. 
\begin{enumerate}
\item The argument of Lemma \ref{koukousei} is necessary.
In general, $df_2^{\mathrm{per}(Q(f_2))}(Q(f_2))$ restricted to the stable direction of $T_{Q(f_2)}M$
is not diagonalizable. We only know there are two eigenvectors one in $E_2$ and the other in $E_1/E_2$.
These facts do not guarantee that we have two linearly independent eigenvectors in $E_1 \oplus E_2$.
\item One may wonder why we can get homoclinic relation between $P(f_3)$ and $Q(f_3)$
just by assuring the closeness of $f_2$ and $f_3$. It is because we fix the periodic points
and never change till the end. On the contrary, 
for example, in the proof of Lemma \ref{lpath}, 
the situation is more subtle because
we need to change the periodic point we choose to decrease the size of the perturbation.
\end{enumerate}
\end{remark}

\subsection{Proof of Lemma \ref{lhomo}}\label{ssprf-c-h}

Here we give the proof of Lemma \ref{lhomo}.
Before going into the detail, we see the naive idea of the proof.
We start from a non-trivial homoclinic class $H(P)$ such that $\mathrm{ind}(P)=1$ 
and the differential restricted to the stable direction has two complex eigenvalues.  
First, by a small perturbation, 
we create a hyperbolic periodic point $Q$ which is homoclinically related to $P$
and the differential at $T_QM$ has positive and distinct eigenvalues 
(this step is carried out inside the proof of Lemma \ref{lmix}).
Then we pick up a periodic point $R_n$ such that 
the differential to the stable direction is  ``mixed up'' under the influence of dynamics
around $P$. 
Now we perturb the diffeomorphism along the orbit of $R_n$ 
using the Franks' lemma such that 
the restriction of differential at $R_n$ to the stable direction is the homothety.
We pick up $R_n$ sufficiently close to $Q$ so that the resulted periodic point
has the homoclinic relation with $Q$.

To demonstrate this naive idea rigorously and clearly, 
we need the techniques developed in \cite{BDP} about 
linear cocycles admitting {\it transitions}.
We do not give the detailed review of \cite{BDP} here.
Instead, we give some explanation about how the techniques
are used in our argument so that the reader who is not 
well acquainted with that technique can understand 
what we do.

In the statement of Lemma \ref{lmix}, we refer to section 1 of \cite{BDP} 
for the definition and fundamental properties of linear cocycle with transition.
Roughly speaking, the transition matrix from a hyperbolic periodic point $Q$ to itself is a matrix of the 
differential of the ``return map'' from a neighborhood of $Q$ to itself. See Remark \ref{rtrans}.
\begin{lemma}[Lemma 5.4 in \cite{BDP}]\label{lmix}
Let $(\Sigma, f, E,A)$ be a continuous three-dimensional periodic linear cocycle 
with transitions and $\varepsilon_0$ be some positive real number. 
Assume that there exists $X \in \Sigma$ such that $\mathrm{ind}(X)=1$ and
$A^{\mathrm{per}(X)}(X)$ has two contracting complex eigenvalues. 
Then for every $0 < \varepsilon_1 < \varepsilon_0$ there is a point $Q \in \Sigma$ 
and an $\varepsilon_1$-transition $[t]$ from $Q$ to itself with the following properties:
\begin{enumerate}
\item There is an $\varepsilon_1$-perturbation 
$\tilde{A}^{\mathrm{per}(Q)}(Q)$ of $A^{\mathrm{per}(Q)}(Q)$ 
such that the corresponding matrix has only real positive eigenvalues with multiplicity one.
We denote the eigenspaces of the matrix by $E_1$, $E_2$ and $E_3$
so that the corresponding eigenvalues are in the increasing order. 
\item There is an $(\varepsilon_0 + \varepsilon_1)$-perturbation $[\tilde{t}]$ of 
the transition $[t]$ from $Q$ to itself such that the corresponding matrix $\tilde{T}$ satisfies the following:
\begin{itemize}
\item $\tilde{T}(E_3) = E_3$.
\item $\tilde{T}(E_1) = E_{2}$ and $\tilde{T}(E_{2}) = E_1$.
\end{itemize}
\end{enumerate}
Furthermore, if there exists $Y \in \Sigma$ such that $J(A^{\mathrm{per}(Y)}(Y))$ is bigger than $1$,
then we can choose the point $Q$ and the perturbation $\tilde{A}$ in the lemma
such that $J(\tilde{A}^{\mathrm{per}(Q)}(Q) )>1$.
\end{lemma}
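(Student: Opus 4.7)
The plan is to exploit the rotational action of $A^{\mathrm{per}(X)}(X)$ on its two-dimensional contracting eigenspace, together with the flexibility provided by the transition machinery developed in section 1 of \cite{BDP}, in order to construct a periodic point $Q$ whose return map, after an $\varepsilon_1$-perturbation, has three distinct positive real eigenvalues, and a composite transition from $Q$ to itself that realizes the prescribed permutation of the resulting eigenspaces $E_1$, $E_2$, $E_3$.

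First I would build $Q$ and the perturbation $\tilde{A}^{\mathrm{per}(Q)}(Q)$. Since the two contracting eigenvalues at $X$ are complex, the iterates of $A^{\mathrm{per}(X)}(X)$ act on the stable plane as a rotation by $n\theta$ modulo $2\pi$ combined with a uniform contraction. Composing a large iterate of $A^{\mathrm{per}(X)}(X)$ with a transition from a neighborhood of $X$ back to itself produces an admissible return map at a periodic point $Q$ whose linearization inherits this rotational character on a two-dimensional invariant subspace. Using the lift-type perturbation provided by Lemma \ref{llift}, one can carry out an $\varepsilon_1$-perturbation that splits the complex conjugate pair into two distinct positive real eigenvalues while adjusting the unstable eigenvalue slightly, producing $\tilde{A}^{\mathrm{per}(Q)}(Q)$ together with its eigenspace decomposition $E_1\oplus E_2\oplus E_3$ ordered by eigenvalue modulus.

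Next I would construct the swap transition $[\tilde{t}]$. With the subspaces $E_1, E_2, E_3$ now fixed, the idea is to build a new transition from $Q$ to itself as a concatenation of (i) standard transitions connecting a neighborhood of $Q$ to the orbit of $X$, (ii) a segment of iterates along $X$ whose rotation in the stable plane carries the line $E_1$ onto the line $E_2$ and vice versa, and (iii) transitions that return from the orbit of $X$ back to $Q$. The preservation of the unstable eigenspace $E_3$ is obtained because transitions can be normalized along the (eventually dominated) unstable direction, so at the cost of an additional $\varepsilon_1$-adjustment one can arrange $\tilde{T}(E_3)=E_3$ exactly. The $(\varepsilon_0+\varepsilon_1)$-perturbation budget absorbs both the ambient size $\varepsilon_0$ of the transition and the small correction needed to make $\tilde{T}$ match the swap $E_1\leftrightarrow E_2$ rather than merely approximate it.

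For the volume-expanding addendum, I would precompose the construction above with transitions that visit a neighborhood of $Y$ a suitable number of times; each such passage contributes a factor close to $J(A^{\mathrm{per}(Y)}(Y))>1$ to the Jacobian of the return map at the new $Q$, so for enough passages $J(\tilde{A}^{\mathrm{per}(Q)}(Q))>1$, and this strict inequality is preserved by the $\varepsilon_1$-perturbations used in the previous steps. The main obstacle is the delicate interplay between the two perturbations: the choice of $\tilde{A}$ determines the concrete positions of the eigenspaces $E_1, E_2, E_3$, and the transition $[\tilde{t}]$ must be built \emph{compatibly} with this already-fixed splitting while staying within the allowed perturbation size. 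The construction therefore has to be carried out in the correct order, first fixing $Q$ and the splitting, and only afterwards realizing the swap transition by composing iterates and transitions whose linearization acts as the desired permutation of $E_1,E_2,E_3$.
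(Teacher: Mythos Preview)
The paper does not give its own proof of this lemma: it is quoted verbatim as Lemma~5.4 of \cite{BDP}, with the surrounding text explicitly saying that no detailed review of the transition machinery will be given. So there is no in-paper argument to compare your sketch against; the ``proof'' in the paper is the citation.

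That said, your outline is broadly faithful to the argument in \cite{BDP}: one uses the rotational action on the contracting plane of $X$ to manufacture, via transitions, a periodic point $Q$ whose perturbed return map has three real positive eigenvalues, and then builds a second transition passing through many iterates at $X$ so that the induced linear map swaps the two contracting eigenlines while fixing the expanding one; the volume-expanding addendum is obtained exactly as you say, by inserting visits to $Y$. Two small imprecisions are worth flagging. First, the invariance $\tilde{T}(E_3)=E_3$ is not a consequence of any domination hypothesis (none is assumed here); it comes from the fact that transitions in \cite{BDP} are built to respect, up to small error, the hyperbolic splitting already present at the periodic points, and a final $\varepsilon_1$-correction makes this exact. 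Second, invoking Lemma~\ref{llift} for the splitting of the complex pair is a bit off-target: in \cite{BDP} this step is a direct perturbation of the $2\times 2$ block (choosing an iterate whose rotation angle is close to $0$ or $\pi$ and nudging it to a real diagonalizable matrix), not a lift from a sub- or quotient cocycle. These are matters of attribution rather than substance; the overall strategy you describe is the one the paper is citing.
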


\begin{remark}\label{rtrans}
The existence of the transition from $Q$ to itself tells us that
given any matrix that is obtained as the product of some $\tilde{A}^{\mathrm{per}(Q)}(Q)$ and some $\tilde{T}$, 
we can find a periodic orbit  whose differential is close to that matrix.
In the proof, we use the existence of transition to find the periodic point $R_n$
whose differential is close to a matrix which we want to create.
\end{remark}

Let us begin the proof of Lemma \ref{lhomo}.
\begin{proof}[Proof of Lemma \ref{lhomo}]

Given $f$ and $P$ as is in the hypothesis of the Lemma \ref{lhomo},
we fix $\varepsilon >0$ and a point $P' \in W^s(P) \ti W^u(P) \setminus \mathcal{O}(P)$
(we can take such $P'$ because $H(P)$ is non-trivial)
and fix a neighborhood $V$ of $\mathcal{O}(P) \cup \mathcal{O}(P')$ 
satisfying the following: For every $g$ that is $\varepsilon$-$C^1$-close to $f$,
if $Z$ is a periodic point such that $\mathcal{O}(Z,g)$
is contained in $V$, then $\mathrm{ind}(Z) =  \mathrm{ind}(P)$ and 
$P$ and $Z$ are homoclinically related. We can take such $V$ because
the set $\mathcal{O}(P) \cup \mathcal{O}(P')$ is uniformly hyperbolic.

Let us take a uniformly hyperbolic set $\Sigma_1$ contained in 
$V$ and contains $P$ and $P'$ such that $\Sigma_1$ admits transitions
(in practice, $\Sigma_1$ is 
a generalized horseshoe containing $P$ and $P'$ as $\Sigma_1$).
We fix a basis of tangent space of each point in $\Sigma_1$ and we identify
the differential maps between tangent spaces with some matrices.
For the detail, see section 1 of \cite{BDP}.

We denote the set of hyperbolic periodic points in $\Sigma_1$ by $\Sigma_2$.
Let us apply Lemma \ref{lmix} to the periodic linear cocycle 
$(\Sigma_2, f, TM|_{\Sigma_2},df)$ with $\varepsilon_0= \varepsilon/2$ and 
$\varepsilon_1 = \varepsilon/4$.
Then we get a periodic point $Q \in \Sigma_2$ such that 
the following property holds for $df \in C(Q)$
(remember that $C(Q)$ denotes the set of cocycles on $Q$):
$df$ is $\varepsilon_1$-close to a cocycle $\sigma \in C(Q) $ whose first return map $\widetilde{\sigma}$
has only real, positive and distinct eigenvalues. 
Note that we can choose $Q$ so that $\det \widetilde{\sigma} > 1$
since $P$ is volume-expanding.
We denote the eigenspaces of $\widetilde{\sigma}$ by $E_1$, $E_2$ and $E_3$
so that the corresponding eigenvalues are in the increasing order. 
We also know the transition with matrix $T$ from $Q$ to itself
has the property as is described in Lemma \ref{lmix},
more precisely, there exists a matrix $\tilde{T}$ that is $(\varepsilon_0 + \varepsilon_1)$-close to $T$
such that the following holds:
\begin{itemize}
\item $\tilde{T}(E_3) = E_3$.
\item $\tilde{T}(E_1) = E_{2}$ and $\tilde{T}(E_{2}) = E_1$.
\end{itemize}

Let us consider a matrix given as follows:
\[
D_n := \widetilde{\sigma}^{2n} \circ \tilde{T} \circ \widetilde{\sigma}^{n} \circ \tilde{T} \circ
 \widetilde{\sigma}^n \circ \tilde{T} \circ  \widetilde{\sigma}^{2n} \circ \tilde{T}.
\]
Then, since $Q$ admits the transition 
with matrix $T$, there exists a periodic point $R_n$ 
such that the cocycle $df \in C(R_n)$ is $\varepsilon_1 + (\varepsilon_0 +\varepsilon_1) =\varepsilon$-close to the 
cocycle $\tau$ such that $\widetilde{\tau}$ is given by $D_n$.

Now, by applying the Franks' lemma (see Lemma \ref{Franks}), 
we get a diffeomorphism $f_n$ that is $\varepsilon$-$C^1$-close to $f$ such that
$R_n$ is a hyperbolic periodic point of $f_n$ and the differential
$df_n^{\mathrm{per}(R_n)}(R_n)$ is equal to $D_n$.

We show that, for each $n$, the linear map $df_n^{\mathrm{per}(R_n)}(R_n)$
leaves $E_1 \oplus E_2$ invariant and acts  as a homothetic endomorphism,
leaves $E_3$ invariant and the eigenvalue of $E_3$ direction is positive.
To see this, let us denote the eigenvalue 
of $\widetilde{\sigma}$ for the $E_i$-direction 
by $\lambda_i$ ($i=1, 2, 3$),
where $\lambda_i$ is some positive real number and
put $\tilde{T}(e_1) = \mu_1 e_2$,  $\tilde{T}(e_2) = \mu_2 e_1$ and $\tilde{T}(e_3) = \mu_3 e_3$,
where $\mu_1$, $\mu_2$ and $\mu_3$ are non-zero real numbers.
Then direct calculations show that $df_n^{\mathrm{per}(R_n)}(R_n)(e_i) = (\mu_1 \mu_2)^2(\lambda_1 \lambda_2)^{3n}e_i$ for $i=1, 2$
and $df_n^{\mathrm{per}(R_n)}(R_n)(e_3) = \mu_3^4 \lambda_3^{6n}e_3$.
If $n$ is sufficiently large, then $\det(df_n^{\mathrm{per}(R_n)}(R_n))$ is greater than one since $\det \widetilde{\sigma}$ 
is greater than one. 

Hence we finished the proof.
\end{proof}

\begin{remark}
\normalfont
The form of the matrix $D_n$ may look bizarre.  
Let us see why we need to pick up this matrix.
We want to choose the matrix whose restriction is a homothety to the stable direction.
For instance, the matrix $\widetilde{\sigma}^n \circ \tilde{T} \circ \widetilde{\sigma}^n \circ \tilde{T}$
satisfies this condition. However, this matrix is a power of the matrix  $\widetilde{\sigma}^n \circ \tilde{T}$.
Hence in general this matrix is not approximated by a first return map of the differential of the some periodic point
(see the definition of the transition).
We choose $D_n$ so that it is not a power of some matrix.
\end{remark}

\subsection{Proof of Lemma \ref{lpath}} \label{ssprf-path}
Finally, we give the proof of Lemma \ref{lpath}.
The argument of the proof is similar to that of 
Proposition 3.1 in \cite{BDP}. 
In addition to the original proof, we need to check two things. 
The first one is that during the perturbation there
is no index bifurcation. The second one is the perturbation
is uniformly small.

We divide the proof of Lemma \ref{lpath} into three steps.
The first step (Lemma \ref{lb-c}) tells us if there is a periodic point 
at which eigenspaces 
forms a small angle, then one can create complex eigenvalues by a small perturbation.
In the second step (Lemma \ref{ld-b}), we prove that if a periodic linear system 
does not admit dominated splittings, 
then one can construct a periodic point with a small angle
by a small perturbation.
Finally, we combine previous two techniques to prove Lemma \ref{lpath}.

We prepare some notations.
Given two one-dimensional subspaces $V_1, V_2$ in a two-dimensional Euclidian space, 
the angle between $V_1, V_2$ is the unique real number $0 \leq \alpha \leq \pi/2$
that satisfies $\cos \alpha = |(v_1, v_2)|/(|v_1||v_2|)$, where
$v_i$ is any non-zero vector in $V_i$ ($i=1, 2$), $(\,\cdot\,, \,\cdot\,)$ is the inner product
and $|\,\cdot\,|$ is the norm derives from the inner product.

Let us state the first and the second steps.
\begin{lemma}\label{lb-c}
For any $K>0$ and $\varepsilon >0$, there exists 
$\alpha = \alpha(\varepsilon, K) >0 $ such that the following holds:
Let $(\Sigma, f, E, A)$ a two-dimensional diagonalizable periodic linear 
cocycle with real, positive distinct eigenvalues, bounded by $K$. 
Suppose there is a point $X$ 
at which the angles between two eigenspaces are less than $\alpha$.
Then there exists a path $\{ \gamma(t) \mid 0 \leq t \leq 1 \}$ in $C(X)$ 
such that the following conditions hold:
\begin{enumerate}
\item $\gamma (0) = A|_{\mathcal{O}(X)}$.
\item $\mathrm{diam}(\gamma (t)) < \varepsilon$. 
\item $\det \widetilde{\gamma(t)} $ is independent of $t$.
\item Let $\lambda_m(t) \leq \lambda_b(t)$ be the absolute value of 
the eigenvalues of $\widetilde{\gamma(t)}$.
Then for $s<t$ we have $\lambda_m(s) \leq \lambda_m(t)$ and
$\lambda_b(s) \geq \lambda_b(t)$.
\item $\widetilde{\gamma(1)} $ has two complex eigenvalues.
\end{enumerate}
\end{lemma}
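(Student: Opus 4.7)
The plan is to localize the perturbation at a single orbit point of $X$, composing one cocycle matrix with a small rotation so that the return map is replaced by $\widetilde{A}\circ R_\theta$. First I would choose an orthonormal basis $(e_1,e_2)$ of $T_X M$ with $e_1$ along the eigenspace $E_b$ of the larger eigenvalue $\lambda_b$ and the second eigenspace $E_m$ along $\cos\alpha\, e_1 + \sin\alpha\, e_2$. In this basis $\widetilde{A}$ is upper triangular with diagonal $(\lambda_b,\lambda_m)$ and upper-right entry $t = (\lambda_m-\lambda_b)\cot\alpha$, so $|t|$ is of order $(\lambda_b-\lambda_m)/\alpha$ and grows unboundedly as $\alpha \to 0$. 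This large off-diagonal entry is the engine that produces complex eigenvalues from a small rotational perturbation.

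Next, for a parameter $\theta_1 > 0$ to be chosen, define $\gamma(s)\in C(X)$ by leaving $A$ unchanged at every orbit point except at $X$ itself, where $A(X)$ is replaced by $A(X)\circ R_{s\theta_1}$, with $R_\theta$ the rotation by angle $\theta$ in the basis above. Then $\widetilde{\gamma(s)} = \widetilde{A}\circ R_{s\theta_1}$ has determinant $\lambda_b\lambda_m$ independent of $s$ and trace
\[
T(s) \;=\; (\lambda_b+\lambda_m)\cos(s\theta_1)\;+\;t\sin(s\theta_1).
\]
A first-order expansion shows that $T(s)^2 - 4\lambda_b\lambda_m$ hits zero once $s\theta_1$ reaches $\tfrac{\sqrt{\lambda_b}-\sqrt{\lambda_m}}{\sqrt{\lambda_b}+\sqrt{\lambda_m}}\tan\alpha$, a quantity bounded above by $\tan\alpha$; hence fixing $\theta_1$ slightly larger than this threshold makes $\widetilde{\gamma(1)}$ genuinely have complex eigenvalues.

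To verify monotonicity, while the eigenvalues of $\widetilde{\gamma(s)}$ are real they are $\tfrac12\bigl(T(s) \pm \sqrt{T(s)^2-4\lambda_b\lambda_m}\bigr)$. Since $T'(0) = t\theta_1 < 0$ and $\theta_1$ is small enough to keep $T'(s) < 0$ on $[0,1]$, $T$ is strictly decreasing, which forces $\lambda_b(s)$ to decrease and $\lambda_m(s)$ to increase monotonically until they collide; past the collision both absolute values equal $\sqrt{\lambda_b\lambda_m}$, so the monotonicity persists in the weak sense the lemma requires. For the perturbation size, $\|R_\theta - I\| \le |\theta|$ together with the bound $\|A\|,\|A^{-1}\| \le K$ gives $\|A(X)\circ R_{s\theta_1} - A(X)\| \le K\theta_1$ and similarly for inverses, so $\mathrm{diam}(\gamma) \le K\theta_1$. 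Choosing $\alpha(\varepsilon,K) := \arctan(\varepsilon/(2K))$ ensures $K\tan\alpha < \varepsilon/2$ and hence $\mathrm{diam}(\gamma) < \varepsilon$.

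The main obstacle is uniformity of the perturbation size: although $\widetilde{A}$ can have arbitrarily large norm (the period of $X$ is unbounded) and $\lambda_b - \lambda_m$ can be huge, the single-matrix modification we perform is controlled only by the fiberwise bound $K$ and the rotation angle $\theta_1$, which is itself bounded purely in terms of $\alpha$. This is precisely why $\alpha(\varepsilon,K)$ depends only on $\varepsilon$ and $K$: the boundedness of the cocycle localizes the cost of creating complex eigenvalues to a single fiber, decoupling it from the period and from the absolute size of the eigenvalues.
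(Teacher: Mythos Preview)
Your proof is correct and follows essentially the same approach as the paper: perturb the cocycle at a single orbit point by composing with a small rotation, then track the trace of the return map to obtain both the monotonicity of the eigenvalue moduli and the onset of complex eigenvalues, with the diameter controlled by $K$ times the rotation angle. The paper composes with the rotation on the left at $f^{\mathrm{per}(X)-1}(X)$ rather than on the right at $X$, and replaces your first-order threshold estimate by an exact endpoint computation (rotating by the eigenspace angle itself yields $T(1)=2\lambda_m\cos\alpha<2\sqrt{\lambda_b\lambda_m}$), which cleanly removes the vagueness in ``$\theta_1$ slightly larger than this threshold''; otherwise the arguments coincide.
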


\begin{lemma}\label{ld-b}
For any $K>0$, $\varepsilon >0$ and $\alpha > 0$, the following holds:
Let $(\Sigma, f, E, A)$  be a two-dimensional diagonalizable periodic linear 
cocycle with real positive distinct eigenvalues, bounded by $K$ and does not admit dominated splittings. 
Then there exists a hyperbolic periodic point $X \in \Sigma$  and 
a path $\{ \gamma(t) \mid 0 \leq t \leq 1 \}$ in $C(X)$ 
such that the following conditions hold:

\begin{enumerate}
\item $\gamma (0) = A|_{\mathcal{O}(X)}$.
\item $\mathrm{diam}(\gamma (t)) < \varepsilon$. 
\item $\det \widetilde{\gamma(t)} $ is independent of $t$.
\item Let $\lambda_m(t) \leq \lambda_b(t)$ be the absolute values of 
the eigenvalues of $\widetilde{\gamma(t)}$.
Then for $s<t$ we have $\lambda_m(s) \leq \lambda_m(t)$ and
$\lambda_b(s) \geq \lambda_b(t)$.
\item $\widetilde{\gamma (1)}$ has two eigenspaces 
with angle less than $\alpha$.
\end{enumerate}
\end{lemma}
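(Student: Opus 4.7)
The plan is to mimic the strategy of Proposition 3.1 of \cite{BDP}: use the absence of a dominated splitting to produce a periodic point whose two real eigendirections can be brought close to each other by a small perturbation along its orbit. The essential difference from that proposition is that the deformation is stopped slightly before the eigenvalues coalesce, so the two eigenspaces become close but remain distinct.

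First, I would extract from the non-existence of a dominated splitting a Ma\~n\'e-type consequence: for the given $K, \varepsilon, \alpha$, there exists an integer $N = N(K, \varepsilon, \alpha)$ such that the failure of $N$-domination forces, within some periodic orbit $\mathcal{O}(X) \subset \Sigma$ of sufficiently long period, the angle between the two eigendirections of the return map $\widetilde{A} = A^{\mathrm{per}(X)}(X)$ (transported along the orbit) to become small at some intermediate point $y \in \mathcal{O}(X)$. The heuristic is that the eigenvalues $\lambda_{1} < \lambda_{2}$ of $\widetilde{A}$ are fixed invariants of the return map, so the only mechanism by which the product $\|A^{N}\|\cdot\|A^{-N}\|$ evaluated along the invariant directions can exceed the domination threshold is a geometric shearing at an intermediate time, which is precisely what a small angle between the eigendirections realises.

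Second, I would use this small-angle configuration to build the path $\gamma(t)$ by inserting a continuously varying rotation $R_{t\beta}$ (for $t \in [0,1]$) into the cocycle at the point $y$. After conjugating back to $X$, the modified return map $\widetilde{\gamma(t)}$ is conjugate to $R_{t\beta} \circ \widetilde{A}_{y}$, where $\widetilde{A}_{y}$ is the unperturbed return map based at $y$. In an adapted basis its eigenvalues are given by
\[
\lambda_{m,b}(t) = \tfrac{1}{2}\Bigl[ (\lambda_{1}+\lambda_{2})\cos(t\beta) \mp \sqrt{(\lambda_{1}+\lambda_{2})^{2}\cos^{2}(t\beta) - 4\lambda_{1}\lambda_{2}}\,\Bigr],
\]
which are real and distinct for $t\beta \in [0, \theta^{*})$ and coalesce at $\theta^{*} = \arccos\bigl(2\sqrt{\lambda_{1}\lambda_{2}}/(\lambda_{1}+\lambda_{2})\bigr)$. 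Choosing $\beta$ slightly less than $\theta^{*}$, the angle between the two eigenspaces of $\widetilde{\gamma(1)}$ is less than $\alpha$ by continuity. This formula makes conditions (3) and (4) transparent: $\det\widetilde{\gamma(t)} = \lambda_{1}\lambda_{2}$ is constant because each rotation has determinant one, $\lambda_{m}$ is non-decreasing, and $\lambda_{b}$ is non-increasing on $[0,1]$.

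Third, the magnitude $\beta$ may exceed $\varepsilon$, in which case I would distribute the total rotation into many individual rotations, each of size less than $\varepsilon$, inserted at successive points along the orbit of $X$; this is feasible because $\mathrm{per}(X)$ can be chosen arbitrarily large in Step~1. Property (1) is built in and property (2) follows from the bound on each small rotation. The main obstacle is Step~1, i.e. making the Ma\~n\'e-type extraction quantitative in terms of $(K, \varepsilon, \alpha)$ and ensuring that the chosen periodic orbit is long enough to accommodate the distributed rotation; this is the technical core of Proposition 3.1 of \cite{BDP}, and the adaptation required here (replacing ``complex eigenvalues created'' by ``small angle reached'') is just a matter of halting the path one instant earlier.
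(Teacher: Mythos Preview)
Your approach is essentially the same as the paper's: both defer to the Bonatti--D\'{i}az--Pujals argument, with the added remark that the eigenvalue monotonicity (conditions (3) and (4)) is immediate because the perturbation is by rotations. The paper's proof is in fact terser than yours---it simply points to Lemma~3.4 of \cite{BDP} and leaves the eigenvalue check to the reader---so your outline is, if anything, more detailed. Note that Lemma~3.4 in \cite{BDP} is already the ``small angle'' step inside the proof of Proposition~3.1 there, so the ``halt one instant earlier'' adaptation you describe is precisely what that lemma packages.

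One caution about your Step~1: as written, the claim that failure of $N$-domination forces the transported eigendirections to have small angle at some intermediate point $y$ of the \emph{unperturbed} cocycle is not correct. A diagonal cocycle (invariant directions always orthogonal) with wildly fluctuating expansion rates can fail domination while keeping the angle equal to $\pi/2$ everywhere. What lack of domination actually provides is a segment of the orbit along which the expansion ratios in the two directions are comparable; it is this that allows the small rotations of Step~3, distributed along that segment, to accumulate and tilt one eigendirection toward the other. Since you correctly identify this as ``the technical core of Proposition~3.1 of \cite{BDP}'' and defer to it, this is an expository slip rather than a gap in the strategy, but you should rephrase Step~1 accordingly.
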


We need an auxiliary lemma about the effect of 
a perturbation on the constant of the boundedness of a cocycle.

\begin{lemma}\label{bdd}
Let $\sigma$ be a cocycle in $C(X)$ where $X$ is a periodic point, 
and let $\gamma(t)$ be 
a path with $\mathrm{diam}(\gamma (t)) < \varepsilon$. 
Then $\gamma(1)$ is bounded by $\varepsilon + \| \sigma \|$.
\end{lemma}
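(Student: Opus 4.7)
The statement is essentially a triangle-inequality observation, so the plan is short. The implicit assumption (consistent with the applications in Lemma \ref{lpath} and the uses in Section \ref{ssprf-cmp}) is that the path starts at $\sigma$, i.e.\ $\gamma(0) = \sigma$; otherwise the conclusion would not involve $\sigma$ at all. Under this reading, the hypothesis $\mathrm{diam}(\gamma(t)) < \varepsilon$ applied to the pair $(0,1)$ gives $\mathrm{dist}(\sigma, \gamma(1)) < \varepsilon$, which unfolds into the two inequalities $\|\gamma(1)(x) - \sigma(x)\| < \varepsilon$ and $\|\gamma(1)^{-1}(x) - \sigma^{-1}(x)\| < \varepsilon$ uniformly in $x \in \mathcal{O}(X)$.

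From here I would apply the triangle inequality twice. For the forward part, $\|\gamma(1)(x)\| \leq \|\gamma(1)(x) - \sigma(x)\| + \|\sigma(x)\| < \varepsilon + \|\sigma\|$. For the inverse part, $\|\gamma(1)^{-1}(x)\| \leq \|\gamma(1)^{-1}(x) - \sigma^{-1}(x)\| + \|\sigma^{-1}(x)\| < \varepsilon + \|\sigma\|$. Taking the supremum of each over $x \in \mathcal{O}(X)$ and then the maximum of the two bounds yields precisely $\max\{\sup_x \|\gamma(1)(x)\|, \sup_x \|\gamma(1)^{-1}(x)\|\} < \varepsilon + \|\sigma\|$, which is the definition of $\gamma(1)$ being bounded by $\varepsilon + \|\sigma\|$.

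There is no genuine obstacle here; the content of the lemma is purely that the norm used to measure a cocycle in $C(X)$ dominates (in fact equals, up to the $\max/\sup$) the metric used to measure closeness of cocycles, so closeness in that metric controls the size of each factor uniformly along the orbit of $X$. The only minor care needed is to verify that one has control on the inverse factor simultaneously, which is precisely why the metric $\mathrm{dist}$ is set up to include the inverse term. Since $\mathcal{O}(X)$ is finite and the two maxima appearing in the definition of the cocycle norm are realized, one can replace the strict inequalities by non-strict ones in the conclusion if desired, but as stated the strict bound is immediate from the strict bound on the diameter.
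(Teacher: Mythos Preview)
Your proof is correct and follows essentially the same approach as the paper: both use the triangle inequality on $\|\gamma(1)(x)\| \leq \|\gamma(1)(x) - \gamma(0)(x)\| + \|\gamma(0)(x)\|$ (the paper writes this at the level of a unit vector $v$, you at the level of operator norms), and then remark that the inverse is handled analogously. Your observation that the implicit hypothesis is $\gamma(0) = \sigma$ is exactly right and matches the paper's use of $\gamma(0)$ in place of $\sigma$.
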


\begin{proof}
For every unit vector $v$ in some $T_{f^i(X)}M$,
we have
\begin{align*}
  &\| (\gamma(1) (f^i(X)))(v) \|  \\
\leq& \| (\gamma(1) (f^i(X)))(v) - (\gamma(0) (f^i(X)))(v) \|  +\| (\gamma(0) (f^i(X)))(v) \| 
\leq \varepsilon + \| \sigma \|.
\end{align*}
We can prove a similar inequality for the inverse of $\gamma(1)$.
\end{proof}

We give the proof of Lemma \ref{lpath} using these three lemmas.
\begin{proof}[Proof of Lemma \ref{lpath}]
Let $(\Sigma, f, E, A)$ be a two-dimensional diagonalizable periodic linear 
cocycle with positive distinct eigenvalues, bounded by $K$ and
suppose that it does not admit dominated splittings.

First, let us assume the angles between two eigenspaces are not bounded below, 
more precisely,
there exists a sequence of periodic points $(X_n)$ such that 
the sequence of the angles $(\alpha_n)$, where $\alpha_n$ is the angle of the 
eigenspaces of $A^{\mathrm{per}(X_n)}(X_n)$, converges to zero.
Fix $\varepsilon >0$ and the constant $\alpha_0 = \alpha(K, \varepsilon)$ in Lemma \ref{lb-c}.
Since $\alpha_n \to 0$ as $n \to \infty$, we can take $n_0$ such that $\alpha_{n_0} < \alpha_0$.
Then Lemma \ref{lb-c} enable us to find a path $\gamma(t)$ in $C(X)$ 
as we claimed.

Hence, we can assume that the angles between two eigenspaces point are uniformly bounded at any point.
By taking appropriate basis at each point, 
we can assume that 
the corresponding eigenspaces are orthogonal at each periodic point. 
Fix $\varepsilon >0$ and the constant $\alpha_1 = \alpha(K+\varepsilon/2, \varepsilon/2)$ in Lemma \ref{lb-c}. 
Applying Lemma \ref{ld-b} to  $(\Sigma, f, E, A)$ with constants $K$, $\varepsilon/2$, and $\alpha_1$,
we get a periodic point $X \in \Sigma$ and a path $\gamma_1(t)$ in $C(X)$
satisfying all the properties in the conclusion of Lemma $\ref{ld-b}$
for $\gamma_1(t)$, $\varepsilon/2$ and $\alpha_1$.
Then Lemma \ref{bdd} implies that the cocycle $\gamma_1(1)$ is bounded by $K + \varepsilon/2$.
Since the angle between eigenspaces of $\widetilde{\gamma_1(1)}$ is less than $\alpha_1$, 
we can apply Lemma \ref{lb-c} 
(replacing  $K$, $\alpha$ and $\varepsilon$ with
$K + \varepsilon/2$, $\alpha_1$ and $\varepsilon/2$ respectively) in order to 
obtain a path $\gamma_2(t)$.

Now we construct a continuous path $\gamma(t)$ as follows:
If $0 \leq t \leq 1/2$ then  $\gamma(t) = \gamma_1(2t)$.
If $1/2 \leq t \leq 1$ then  $\gamma(t) = \gamma_2(2t-1)$.
We have $\mathrm{diam}(\gamma(t))$ is less than $\varepsilon $
because $\mathrm{diam}(\gamma_1(t))$, $\mathrm{diam}(\gamma_2(t))$ are less than  $\varepsilon /2$.
Thus we constructed the path we claimed and the proof is completed.
\end{proof}

Let us give the proof of Lemma \ref{lb-c}.
For the proof, we need an elementary lemma.
\begin{lemma}\label{eleine}
Let $R(x)$ denote the rotation matrix of angle $x$ on a two-dimentional Euclidian space. 
Then there exists a positive constant $C$
such that the following inequality holds:
\[
\| R(s) - R(t) \|  \leq C |s-t| \,\, \mbox{for all} \,\, -\pi /4 \leq s, t \leq \pi /4.
\]
\end{lemma}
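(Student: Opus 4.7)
The plan is to verify the inequality directly by a calculus argument, with $C = 1$ (so the restriction $-\pi/4 \leq s,t \leq \pi/4$ turns out to be superfluous). Write
\[
R(x) = \begin{pmatrix} \cos x & -\sin x \\ \sin x & \cos x \end{pmatrix},
\]
and differentiate entrywise to get
\[
R'(x) = \begin{pmatrix} -\sin x & -\cos x \\ \cos x & -\sin x \end{pmatrix}.
\]
The key observation is that $R'(x)$ is itself orthogonal (in fact $R'(x) = R(x + \pi/2)$ up to sign), so its operator norm equals $1$ at every $x$.

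Then I would apply the fundamental theorem of calculus componentwise to conclude
\[
R(s) - R(t) = \int_t^s R'(x)\, dx,
\]
and pass to operator norms via the triangle inequality to obtain
\[
\| R(s) - R(t) \| \;\leq\; \left| \int_t^s \| R'(x) \|\, dx \right| \;=\; |s - t|.
\]
So $C = 1$ works. A purely algebraic alternative, avoiding matrix calculus, is to use the identities $\cos s - \cos t = -2\sin\!\tfrac{s+t}{2}\sin\!\tfrac{s-t}{2}$ and $\sin s - \sin t = 2\cos\!\tfrac{s+t}{2}\sin\!\tfrac{s-t}{2}$ together with $|\sin u|\le |u|$ to bound each entry of $R(s)-R(t)$ by $|s-t|$, and then estimate the operator norm by the Frobenius norm, which gives a constant $C \leq 2$.

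There is no real obstacle: the statement is simply a Lipschitz estimate for a smooth matrix-valued function with bounded derivative, and the angle restriction in the hypothesis plays no role in the proof; it is presumably included only because this is the regime in which Lemma \ref{eleine} will be invoked.
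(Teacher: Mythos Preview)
Your argument is correct and in fact cleaner than the paper's. The paper proceeds by introducing the Frobenius norm $\|A\|_2 = \sqrt{a^2+b^2+c^2+d^2}$, fixing a comparison constant $N$ with $\|A\|\le N\|A\|_2$, and then bounding $|\cos s-\cos t|\le |s-t|/\sqrt{2}$ and $|\sin s-\sin t|\le |s-t|$ on $[-\pi/4,\pi/4]$ via the mean value theorem; this yields $C=\sqrt{3}\,N$. Your approach---observing that $R'(x)=R(x+\pi/2)$ is itself orthogonal, hence has operator norm $1$, and integrating---gives the sharp constant $C=1$ and shows the restriction to $[-\pi/4,\pi/4]$ is irrelevant. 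The paper's version is more hands-on but less efficient; yours is shorter and yields a better constant. Either suffices for the application in Lemma~\ref{lb-c}, where only the existence of some $C$ is needed.
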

\begin{proof}
We introduce a norm $\| \, \cdot \, \|_2$ on the space of linear maps as follows.
Fix an orthonormal basis of the Euclidean space and for a linear map $A$, define 
\[
\| A \|_2 =
\left\| \begin{pmatrix}
a & b \\
c & d
\end{pmatrix} \right\|_2 = \sqrt{a^2 +b ^2 +c^2 +d^2},
\]
where the matrix denotes the matrix representation of $A$
with respect to the basis.
We fix a constant $N$ such that for every $A$ the inequality $\| A \| \leq N \| A \|_2$ holds.
Then, 
\begin{align*}
& \quad \| R(s) - R(t) \|^2 \\
& \leq N^2 \| R(s) - R(t)\|_2^2 
= 2N^2 \big( (\cos(s)-\cos(t))^2 + (\sin(s)-\sin(t))^2 \big).
\end{align*}
Since $-\pi /4 \leq s, t \leq \pi /4$, we get
\[
 |\cos(s)-\cos(t)| \leq |s-t|/\sqrt2, \,  |\sin(s)-\sin(t)| \leq |s-t|.
\]
So we have
\[
2 N^2 \big( (\cos(s)-\cos(t))^2 + (\sin(s)-\sin(t))^2 \big) 
\leq 3 N^2 |s-t|^2.
\]
Hence for $C= \sqrt3N$ we have the inequality.
\end{proof}

\begin{proof}[Proof of Lemma \ref{lb-c}]
Let $X$ be a point at which the angle of corresponding eigenspaces is less than $\alpha$.
By taking an orthonormal basis, we can take a matrix representation of $A^{\mathrm{per}(X)}(X)$ with
\[
A^{\mathrm{per}(X)}(X)=
\begin{pmatrix}
\lambda_1  & \mu           \\
               0 & \lambda_2
\end{pmatrix},
\]
where $\lambda_1< \lambda_2$ is positive real numbers and $\mu$ is some non-zero real number.
By a direct calculation, we get $ (\lambda_2 -\lambda_1)/|\mu| < \tan \alpha$
(left hand side is the tangent of the angle between two eigenspaces at $X$). 

We fix $\varepsilon >0$ and put $\alpha = 
\alpha (K, \varepsilon) := \varepsilon / (CK)$, 
where $C$ is the constant given in Lemma \ref{eleine}.
Let us define a path $\gamma(t)$ as follows: 
\begin{align*}
[\gamma(t)](f^{i}(X)) =
\begin{cases}
 A|_{f^{i}(X)}, & \mbox{if} \,\,\, f^{i}(X) \neq f^{\mathrm{per}(X)-1}(X), \\
 R(-\mathrm{sign}(\mu )\alpha t)A|_{f^{i}(X)},  & \mbox{if} \,\,\, f^{i}(X) = f^{\mathrm{per}(X)-1}(X),
\end{cases}
\end{align*}
where $\mathrm{sign}(\mu)$ is the signature of $\mu$.

Let us see this path enjoys all the conditions we claimed in Lemma \ref{lb-c}.
In the following, we only treat the case when $\mu >0$. 
The proof for the case $\mu <0$ can be done similar way.
We first examine the diameter of this path. To see this, we only need to check the distance of
 $[\gamma(t)](f^{\mathrm{per}(X)-1})$, which can be estimated using Lemma \ref{eleine} as follows:
 \begin{align*}
    & \quad \,\, \| [\gamma(s)](f^{\mathrm{per}(X)-1}) - [\gamma(t)](f^{\mathrm{per}(X)-1}(X)) \|  \\
    & =  \| R(-\alpha s)A|_{f^{\mathrm{per}(X)-1}(X)} - R(-\alpha t)A|_{f^{\mathrm{per}(X)-1}(X)} \|  \\ 
   & \leq  \| R(-\alpha s) -  R(-\alpha t ) \| \|A|_{f^{\mathrm{per}(X)-1}(X)}\|  \\
   &\leq \alpha CK |s-t| \leq \varepsilon |s-t| \leq \varepsilon.
 \end{align*}
We can get similar estimates for the inverse.

The value $\det \widetilde{\gamma(t)}$ is independent of $t$, 
since $\gamma(t)$ is obtained by multiplying an orthogonal matrix to $\gamma(0)$.
Let us investigate the behavior of the eigenvalues. 
We denote by $\lambda_m(t), \lambda_b(t)$  ($\lambda_m(t) \leq \lambda_b(t)$) the eigenvalues of $\widetilde{\gamma}(t)$.
The characteristic equation of  $\widetilde{\gamma(t)}$ is given by
$x^2 -\theta x +d =0$, where $d=\lambda_1 \lambda_2$ and 
$\theta(t) = (\lambda_1 + \lambda_2) \cos (-\alpha t) + \mu \sin (-\alpha t)$.
So two eigenvalues are given as $(\theta \pm  \sqrt{\theta^2 -4d})/2$.
To complete the proof, it is enough to check the following properties hold:
\begin{itemize}
\item $\theta >0$ for all $0 \leq t \leq  1$.
\item $\theta$ is monotone decreasing when $t$ increases.
\item $\theta^2-4d <0$ when $t=1$.
\end{itemize}
Indeed, $\lambda_b(t) = $ is $(\theta +  \sqrt{\theta^2 -4d})/2$ when $\theta >0$.
Therefore if $\theta$ is monotone decreasing, so is $\lambda_b(t)$.
Moreover, it implies that
 $\lambda_m(t)$ is monotone increasing, 
since $\lambda_m(t)\lambda_b(t)$ is a positive constant.

Let us chek the items above.
First, observe that
\[
\theta(t) =\sqrt{(\lambda_1+\lambda_2)^2 + \mu^2} \sin(\beta -\alpha t),
\]
where $0< \beta < \pi/2$ is a real number satisfying $\tan \beta = (\lambda_1+\lambda_2)/ \mu$.
This shows that $\beta > \alpha$. Thus $\theta(t)$ is positive and monotone decreasing.

Finally, we see $\theta^2-4d <0$. When $t=1$, we obtain that
\begin{align*}
\theta(1) &= \frac{\mu(\lambda_1 + \lambda_2) - \mu(\lambda_2 - \lambda_1)}{\sqrt{(\lambda_2 - \lambda_1)^2 + \mu^2}} 
 = \frac{2\lambda_1}{\sqrt{1 + \left( \frac{\lambda_1 - \lambda_2}{\mu}\right)^2 } }
< 2 \sqrt{\lambda_1^2} <  2 \sqrt{\lambda_1\lambda_2} =2 \sqrt{d}.
\end{align*}
So, we finished the proof.
\end{proof}

Finally, let us give the proof of the Lemma \ref{ld-b}.
\begin{proof}[Proof of Lemma \ref{ld-b}]
The proof of Lemma \ref{ld-b} is just the repetition of Lemma 3.4 in \cite{BDP}.
We can easily check the behavior of the eigenvalues during the perturbation. 
So the proof is left to the reader.
\end{proof}

\section{Bifurcation of degenerate tangency}\label{biftan}

In this section we prove Proposition \ref{pbiftan}.

\subsection{Strategy of the proof of Proposition \ref{pbiftan}}

We divide the proof into two steps.
The first step is to reduce the dynamics into
affine dynamics.
The second step is to investigate the bifurcation of the dynamics.

To state our proof clearly, let us give the following definition.
\begin{definition}
Let $f \in \mathrm{Diff}^1(M)$ with $\dim M =3$ and $X$ be a hyperbolic 
index-1 fixed point with a homoclinic tangency and 
eigenvalues of $df(X)$ are positive and distinct.
A one-parameter family of the $C^1$-diffeomorphism $(f_t)_{|t| < \delta} \subset \mathrm{Diff}^1(M)$
is said to be an {\it affine unfolding of the degenerate tangency of $f$ with respect to $X$} if the following holds 
(see Figure \ref{fig:DTset}):
\begin{enumerate}
\def\labelenumi{($DT$\theenumi)}
\item $f_0 =f$.
\item There exists a coordinate chart $\phi : U \to \mathbb{R}^3$ around $X$ 
such that $\phi(U) = (-1,1)^3$ and $\phi(X)$ is the origin of $\mathbb{R}^3$.
\item  We put $F_t :=  \phi \circ f_t \circ \phi^{-1}$.
For  $(x, y, z) \in (-1,1) \times (-1 , 1) \times (-\mu^{-1}, \mu^{-1})$, 
we have $F_t (x, y, z) =(\lambda x, \tilde{\lambda} y , \mu z )$,
where $0< \tilde{\lambda} < \lambda < 1 <\mu$. 
We also require $X$, $\tilde{\lambda}$, $\lambda$ and $\mu$
are independent of $t$.
Thus locally $z$-axis is the unstable manifold of $\phi(X)$ and $xy$-plane is the 
stable manifold of $\phi(X)$.
\item There exist two points $P, Q \in U$ with $\phi(P) =(0, 0, p)$ and 
$\phi(Q) =(0, q, 0)$, where $0 < p, q < 1$, such that the following holds:
For some positive integer $N \geq 2$, $f_0^N(P) =Q$ and $f_t^{i}(P) \not\in U$ for $0 < i  < N$. This means $P, Q \in W^s(X) \cap W^u(X)$.
\item By the abuse of notation, 
we denote $\phi \circ f^N_t \circ \phi^{-1}$ by $F^N_t$.
Then, there exists a small neighborhood $W_0$ of $P$ with
$\phi(W_0) = [-\varepsilon,\varepsilon] \times [ -\varepsilon, \varepsilon]
\times [p- \varepsilon , p+ \varepsilon]$ and $F_t^N(W_0) \subset V$
such that  for every $(x, y, z +p) \in W_0$, $F_t^N(x, y, z +p) = (az, by +q, cx+t)$, 
where $a, b$ and $c$ are non-zero real numbers.
We put $W_1 := F_0^N(W_0)$ and call $F^N_t$ {\it return map}.
\end{enumerate}
\end{definition}

\begin{figure}[t]
\begin{center}
  \includegraphics[width=4in]{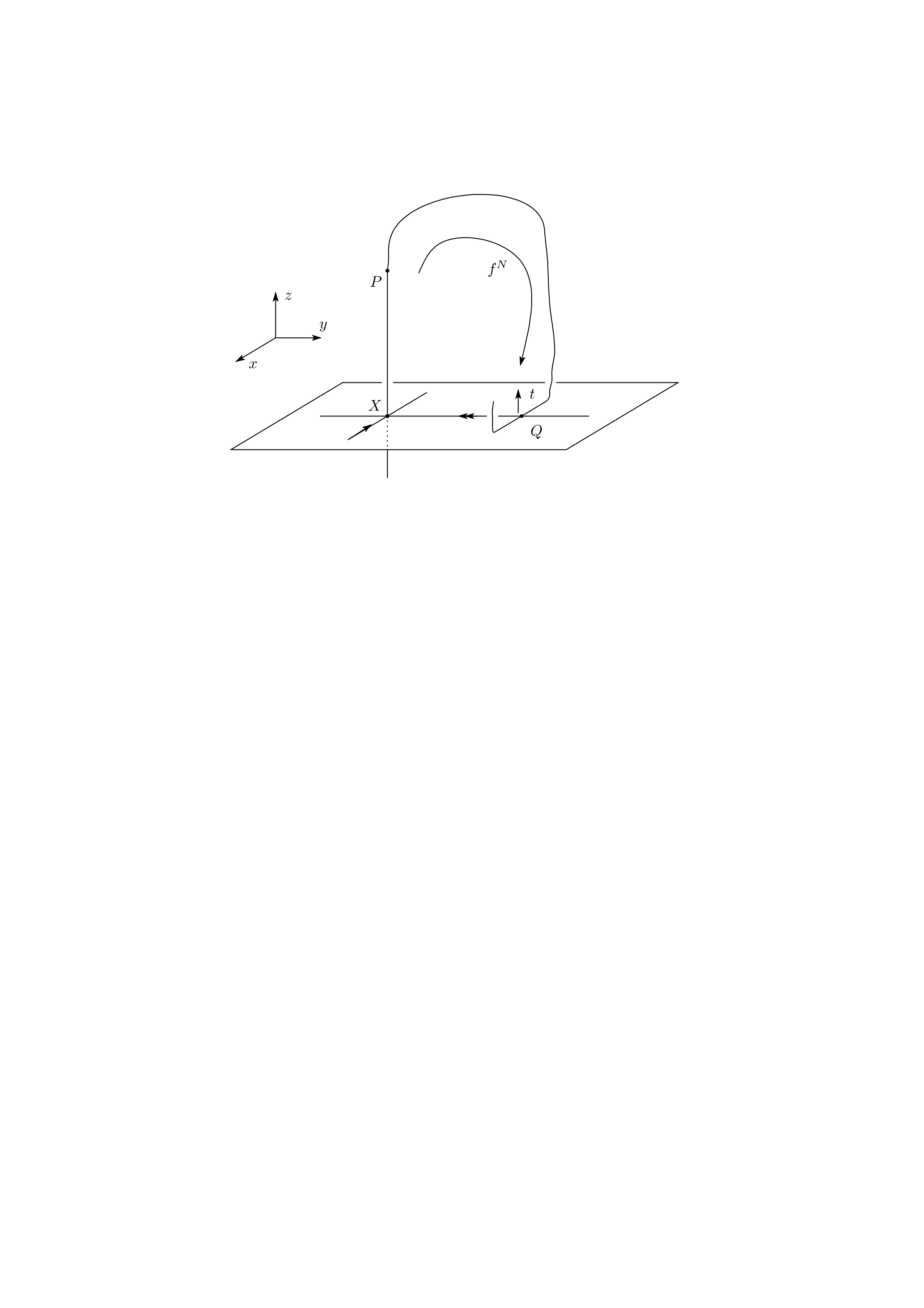}\\
 \caption{An affine unfolding of the degenerate tangency.}
 \label{fig:DTset}
  \end{center}
\end{figure}

Our first step is stated as follows:
\begin{proposition}\label{laffcst}
If $X$ has a homothetic tangency, then one can find a 
diffeomorphism $g$ $C^1$-arbitrarily close to $f$ such that
there exists a one-parameter family of $C^1$-diffeomorphisms 
$(g_t)_{|t| <\delta}$ that is an affine unfolding of the degenerate tangency 
of $g$ with respect to $X(g)$.
\end{proposition}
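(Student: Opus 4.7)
The plan is to obtain $g$ by two independent $C^1$-small perturbations of $f$ — a first that makes $f$ affine on a box around $X$, and a second that makes the $N$-th return map $f^N$ affine on a box around a chosen tangency point $P$ — after which the parameter family $g_t$ is introduced by a $t$-translation supported in a flow-box along the orbit from $P$ to $Q$, disjoint from both $X$ and $P$. Throughout, the standard technical device is the cutoff-linearization: if $f(X)=X$ and $df(X)=L$ in some local chart, then gluing $L$ inside a small ball $B_r$ to $f$ outside $B_r$ by a smooth bump yields a diffeomorphism coinciding with $L$ on $B_r$ and $C^1$-close to $f$, because $f-L=o(r)$ and $df-dL=o(1)$ on $B_r$.

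Stage A linearizes $f$ near $X$. The homothetic tangency hypothesis gives $df(X)=\lambda_0 I$ on the stable plane and multiplication by $\mu_0>1$ on the unstable line, all positive real. Choose local coordinates $\phi\colon U_0\to(-1,1)^3$ diagonalizing $df(X)$, select $\tilde\lambda$ slightly below $\lambda:=\lambda_0$ in order to split the degenerate stable eigenvalue, and set $L=\mathrm{diag}(\lambda,\tilde\lambda,\mu)$ with $\mu=\mu_0$. The essential extra flexibility here, absent when the stable eigenvalues already differ, is that the stable basis $(e_1,e_2)$ is arbitrary at the homothetic level and so can be chosen freely; this will be exploited in Stage B. Applying the cutoff-linearization to $\phi\circ f\circ\phi^{-1}$ with target $L$ and then rescaling the inner box to $(-1,1)^3$ gives a diffeomorphism satisfying DT2 and DT3.

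Stage B makes $f^N$ affine on a box around $P$ and brings its linear part into anti-diagonal form. The interior iterates $f(P),\dots,f^{N-1}(P)$ lie outside a closed neighborhood of $\overline{U_0}$, so perturbations there do not interfere with Stage A. Applying the cutoff-linearization to $f^N$ on a box $W_0$ around $P$ — transferred to $f$ through one of the interior iterates — makes $f^N$ affine on $W_0$ with linear part $A=df^N(P)$; the tangency condition automatically forces $Ae_3\in\mathrm{span}(e_1,e_2)$, i.e.\ the $(3,3)$-entry vanishes. Now choose the stable basis at $X$, left free by Stage A, so that $e_1$ is aligned with the tangent direction of $W^u(X)$ at $Q$; this sharpens the tangency to $Ae_3=ae_1$, killing the $(2,3)$-entry as well. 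The remaining four entries $(1,1),(1,2),(2,1),(3,2)$ are then set to zero by an auxiliary arbitrarily-small Franks-type perturbation along the interior orbit, which is permitted because the cocycle along $N-1$ interior iterates carries far more parameters than the four linear constraints being imposed. One arrives at $F_0^N(x,y,z+p)=(az,by+q,cx)$ on $W_0$.

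Finally, pick a bump $\chi$ supported in a small box around $f^{N-1}(P)$ that is disjoint from $\overline{U_0}$, from $W_0$, and from the other iterates of $P$, and let $v$ be a constant vector in that chart which is sent by the remaining iterate to $e_3$ at $Q$. Setting $g_t=g+t\,\chi\,v$ then produces a one-parameter family with $F_t^N(x,y,z+p)=(az,by+q,cx+t)$, verifying DT1--DT5. The main technical obstacle is the quantitative step in Stage B: one must justify that the four off-diagonal entries can indeed be zeroed by $C^1$-small Franks-type adjustments compatible with both the tangency constraint and the cutoff already applied. This is the point at which the homothetic hypothesis is used most delicately, via the basis freedom inherited from Stage A, and it reduces to a linear-algebra computation along the interior orbit combined with the standard Taylor-remainder estimates for the two cutoffs.
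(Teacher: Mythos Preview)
Your overall strategy---linearize near $X$, linearize the return map near $P$, then add the parameter---matches the paper's. But the crucial Stage~B claim that the four entries $(1,1),(1,2),(2,1),(3,2)$ can be zeroed by ``an auxiliary arbitrarily-small Franks-type perturbation along the interior orbit'' is not justified by dimension counting. Franks' lemma lets you replace each $df(f^i(P))$ by any linear map $\varepsilon$-close to it; hence the achievable products $dF^N(P)$ fill only a neighborhood of the original matrix $A$ of radius $O(\varepsilon)$. The four entries you want to kill are generically of size $O(1)$, so there is no reason for this neighborhood to reach the codimension-$4$ zero set. Having ``more parameters than constraints'' is irrelevant: the obstruction is not surjectivity but the size of the image.

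The paper's mechanism for making the perturbation small is the one idea your proposal is missing. Instead of working with the fixed return time $N$, one composes with $l$ further iterates of the already-linear local map $L=\mathrm{diag}(\lambda,\tilde\lambda,\mu)$ and studies $L^{l}\cdot A$ (and, in a second pass, $A\cdot L^{l}$). Because $\tilde\lambda<\lambda<1<\mu$, the specific corrections needed to kill the unwanted entries of $L^{l}A$ are of order $(\tilde\lambda/\lambda)^{l}$, $(\lambda/\mu)^{l}$, $(\tilde\lambda/\mu)^{l}$, hence exponentially small in $l$; a Franks perturbation at a single point of size tending to $0$ as $l\to\infty$ then suffices. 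After each such step one renormalizes (replaces $P$, $Q$, $N$ by their $l$-shifted versions) so that the new $dF^N(P)$ carries the extra zeros. Two passes---one using forward iterates near $Q$, one using backward iterates near $P$---bring $dF^N(P)$ to the anti-diagonal form. Your basis-alignment trick for the $(2,3)$-entry is legitimate, but it does not extend to the remaining entries, and the iteration argument is what actually does the work.

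A secondary gap: your Stage~A cutoff-linearization near $X$ does not by itself preserve the homoclinic tangency. The tangency lives in the global stable and unstable manifolds, which thread through the transition annulus where the bump interpolates between $L$ and $f$; there the invariant manifolds can and generally will move. The paper invokes Gourmelon's version of Franks' lemma (Lemma~\ref{llin-fix3}), which linearizes while \emph{locally preserving the invariant manifolds outside the perturbation neighborhood}, and this is exactly what keeps $P$ and $Q$ on $W^u(X)\cap W^s(X)$ through the first step.
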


The second step is the following:
\begin{proposition}\label{laffbif}
Let $X$ be a volume-expanding 
hyperbolic periodic point with an affine unfolding of 
the degenerate tangency $(f_t)_{|t| <\delta}$ with respect to $X$.
Then, for arbitrarily small $\varepsilon >0$, there exists $0<\tau \leq \varepsilon$
such that $f_{\tau}$ has an index-two periodic point $Y$ and
there exists a heterodimensional cycle associated to $X(f_{\tau})$ and $Y$.
\end{proposition}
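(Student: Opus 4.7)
The plan is to renormalize around the tangency. For each large integer $k$, consider the return map $T_{t,k} := F_{t}^{N+k} = F_{t}^{k}\circ F_{t}^{N}$ on $W_{0}$. Combining (DT5) with the local linearization of $F_{t}$ (valid as long as the $k$ intermediate iterates stay in the chart $(-1,1)^{2}\times(-\mu^{-1},\mu^{-1})$, which is arranged by shrinking $W_{0}$ as a function of $k$ and, if necessary, replacing $P$ by a finite backward iterate to force $p<\mu^{-1}$), this composition is the explicit affine map
\[
T_{t,k}(x,y,z+p) = \bigl(\lambda^{k}a\,z,\; \tilde\lambda^{k}(by+q),\; \mu^{k}(cx+t)\bigr).
\]
Solving the fixed-point equation yields a unique fixed point $Y_{k}(t)=(x^{*},y^{*},z^{*}+p)$ with
\[
y^{*}=\frac{\tilde\lambda^{k}q}{1-\tilde\lambda^{k}b},\qquad z^{*}=\frac{\mu^{k}t-p}{1-(\lambda\mu)^{k}ca},\qquad x^{*}=\lambda^{k}a\,z^{*}.
\]
The differential of $T_{t,k}$ is constant; besides the $y$-eigenvalue $\tilde\lambda^{k}b$, it has the two $(x,z)$-eigenvalues $\pm\sqrt{(\lambda\mu)^{k}ca}$, which are real of opposite signs if $ca>0$ and purely imaginary complex conjugates if $ca<0$. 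The volume-expansion of $X$ gives $\lambda\tilde\lambda\mu>1$, and since $\tilde\lambda<1$, also $\lambda\mu>1$. Hence for $k$ large, $|\tilde\lambda^{k}b|<1$ while both $(x,z)$-eigenvalues have modulus $\sqrt{|ca|}(\lambda\mu)^{k/2}>1$; thus $Y_{k}$ is a hyperbolic periodic point of $f_{t}$ of period $N+k$ and index $2$.

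I now choose $\tau := t_{k} := p\mu^{-k}$, which is positive and at most $\varepsilon$ once $k$ is large. This choice kills the numerator of $z^{*}$, giving $z^{*}=x^{*}=0$ and
\[
Y_{k} = (0,\, y^{*},\, p),\qquad y^{*} = \tilde\lambda^{k}q/(1-\tilde\lambda^{k}b),
\]
a point on the vertical line $\{x=0,\,z=p\}$ through $P=(0,0,p)$. The heterodimensional cycle between $X(f_{\tau})$ (index $1$) and $Y_{k}$ (index $2$) is then produced by exhibiting both required intersections. For $W^{u}(X)\cap W^{s}(Y_{k})$: iterating $T_{t_{k},k}$ on $P$ gives $T_{t_{k},k}^{n}(P)=(0,y_{n},p)$, where $y_{n+1}=\tilde\lambda^{k}(by_{n}+q)$ is a contracting affine recursion converging to $y^{*}$, and all iterates stay in $W_{0}$, so $P\in W^{s}(Y_{k},f_{\tau})$; since $P\in W^{u}(X)$ by construction, $P$ is a (quasi-transverse) point of $W^{u}(X)\cap W^{s}(Y_{k})$. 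For $W^{s}(X)\cap W^{u}(Y_{k})$: the affine structure makes $W^{u}_{\mathrm{loc}}(Y_{k})$ contain the $2$-disc $\{(x,y^{*},p+z):|x|,|z|<\varepsilon\}$, and applying $F_{t_{k}}^{N}$ to the slice at $x=-t_{k}/c=-p/(c\mu^{k})$ sends it to the segment $\{(az,\,by^{*}+q,\,0):|z|<\varepsilon\}$, which lies in $W^{s}_{\mathrm{loc}}(X)=\{z=0\}$ and, by $f_{\tau}$-invariance of $W^{u}(Y_{k})$, also in $W^{u}(Y_{k})$; the dimension count $2+2>3$ guarantees this is a transverse intersection.

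The main obstacle is the technical bookkeeping that makes the affine formula for $T_{t,k}$ usable: one must keep the $k$ iterates of $F_{t}$ inside the linearizing chart (the shrinking of $W_{0}$ with $k$ plus the possible shift of $P$ suffices), and handle both signs of $ca$ uniformly in the eigenvalue analysis. Once this is in place, everything reduces to the explicit affine computation above, which may be viewed as the three-dimensional, volume-expanding counterpart of the classical creation of a sink at the homoclinic tangency of a dissipative saddle: here the reversed volume condition instead produces an index-$2$ periodic point together with a cycle connecting it to $X$.
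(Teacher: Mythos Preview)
Your approach is essentially the paper's: both renormalize, find the fixed point of the return map at parameter $\tau=p\mu^{-k}$, verify its index is $2$ using $\lambda\mu>1$, and then exhibit the two heteroclinic connections. Your computation of the periodic point and its eigenvalues is correct, and your argument that $P\in W^{s}(Y_k)\cap W^{u}(X)$ via the contracting affine recursion is clean (the paper instead tracks a short segment through $f^n(R_n)$ and the $z$-axis, but the idea is the same).

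There is, however, a real gap in your argument for $W^{s}(X)\cap W^{u}(Y_k)\neq\emptyset$. You assert that ``the affine structure makes $W^{u}_{\mathrm{loc}}(Y_k)$ contain the $2$-disc $\{(x,y^*,p+z):|x|,|z|<\varepsilon\}$'', but this is not justified and in fact fails for large $k$. The derivative of $T_{t_k,k}$ on the $(x,z)$-plane is $\left(\begin{smallmatrix}0&\lambda^k a\\ \mu^k c&0\end{smallmatrix}\right)$, which is extremely non-normal: one off-diagonal entry is exponentially small, the other exponentially large. A point of the disc with $|x|$ of order $\varepsilon$ does not even lie in $F_{t_k}^k$ of the linearizing chart (whose $x$-width is only $\lambda^k$), so the affine formula for $T_{t_k,k}^{-1}$ does not apply there, and the backward orbit cannot be controlled by your affine model. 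The local unstable manifold that the affine picture actually delivers is therefore far smaller than the $\varepsilon$-disc.

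The particular slice you use, at $x=-p/(c\mu^k)$, \emph{does} lie in $W^u(Y_k)$, but establishing this requires precisely the careful bookkeeping you defer. The paper carries it out by following a short segment of length $\sim p/\mu^k$ backward through \emph{two} full periods $f^{-2(N+k)}$, checking at each stage that the iterate stays in $W_0$ or $W_1$, and observing that the net contraction factor after two periods is $(|ac|(\lambda\mu)^k)^{-1}<1$. One period alone is not enough: the segment is stretched by a factor $\sim\lambda^{-k}$ in the first leg before being re-contracted in the second, and only the smallness of the initial length (of order $\mu^{-k}$, not $\varepsilon$) keeps the intermediate iterate inside the chart. Supplying this verification would close the gap.
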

It is clear that these two propositions imply Propotision \ref{pbiftan}.

\subsection{Proof of Proposition \ref{laffcst}}
We give the proof of Proposition \ref{laffcst}.
For the proof, we prepare two lemmas.

The first one is a variant of the Franks' lemma. We omit the proof, 
since it is easily obtained from the proof of the Franks' lemma.

\begin{lemma}[Local linearization]\label{llin-map}
Let $f \in \mathrm{Diff}^1(M)$ with $\dim M = m$. Consider $x \in M$
and a coordinate neighborhoods $\phi : U \to \mathbb{R}^m$
and $\psi : V \to \mathbb{R}^m$ of $x$ and $f(x)$ respectively
such that $\phi(x)$ and $\psi(f(x))$ are the origin of  $\mathbb{R}^m$.
Then for any $\varepsilon >0$ and any neighborhood $U'$ of $x$, 
there exist a neighborhood $\tilde{U}$ of $x$ contained in $U'$ 
and $\tilde{f} \in \mathrm{Diff}^1(M)$ such that 
$\tilde{f}$ is $\varepsilon$-$C^1$-close to $f$,
$\tilde{f}$ coincides with $f$ on $M \setminus U'$ and
the map $(\psi \circ \tilde{f}\circ \phi^{-1})$ coincides with 
a linear map given by $d(\psi \circ f \circ\phi^{-1})(\mathbf{0})$ on $\tilde{U}$
(where $\mathbf{0}$ denotes the origin of $\mathbb{R}^m$).
\end{lemma}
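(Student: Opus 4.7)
The plan is to reduce everything to a statement in Euclidean coordinates and perform a standard bump-function interpolation, exactly as in the proof of the usual Franks' lemma. Set $G := \psi \circ f \circ \phi^{-1}$, defined on a neighborhood of the origin in $\mathbb{R}^m$ and sending $\mathbf{0}$ to $\mathbf{0}$. Let $L := dG(\mathbf{0})$ and write $G(u) = Lu + r(u)$, where the remainder satisfies $r(\mathbf{0}) = \mathbf{0}$ and $dr(\mathbf{0}) = 0$. First I would fix a smooth bump function $\rho : \mathbb{R}^m \to [0,1]$ with $\rho \equiv 1$ on the closed unit ball and $\rho \equiv 0$ outside the ball of radius $2$, then pick $\delta > 0$ so small that the ball $B_{2\delta}$ is contained in $\phi(U')$, and define
\[
\tilde G(u) := Lu + \bigl(1 - \rho(u/\delta)\bigr)\, r(u).
\]
By construction $\tilde G \equiv L$ on $B_\delta$ and $\tilde G \equiv G$ outside $B_{2\delta}$, so the modification is localized inside $\phi(U')$.

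The crucial point is the $C^1$-closeness of $\tilde G$ to $G$. Differentiating,
\[
\tilde G(u) - G(u) = -\rho(u/\delta)\, r(u),
\]
\[
d\tilde G(u) - dG(u) = -\tfrac{1}{\delta}\, d\rho(u/\delta)\cdot r(u) - \rho(u/\delta)\, dr(u).
\]
On the support annulus $\delta \leq |u| \leq 2\delta$, the conditions $r(\mathbf{0}) = 0$ and $dr(\mathbf{0}) = 0$ give $|r(u)| = o(\delta)$ and $|dr(u)| = o(1)$ as $\delta \to 0$, while $|d\rho(u/\delta)|/\delta$ is of order $1/\delta$. The dangerous-looking product $(1/\delta)\cdot r(u)$ is therefore of order $o(1)$, so both $\tilde G - G$ and $d\tilde G - dG$ tend to zero uniformly as $\delta \to 0$. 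Choosing $\delta$ small enough I make the $C^1$-norm of this difference less than $\varepsilon / C$, where $C$ absorbs the distortion introduced by the chart changes $\phi$ and $\psi$ on the support of the perturbation.

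Finally, I transport the construction back by setting $\tilde f := \psi^{-1} \circ \tilde G \circ \phi$ on $\phi^{-1}(B_{2\delta})$ and $\tilde f := f$ elsewhere; the two definitions glue smoothly because they agree on the overlap, and $\tilde f$ is a genuine diffeomorphism because it is a $C^1$-small perturbation of $f$. Taking $\tilde U := \phi^{-1}(B_\delta)$ yields the desired conclusion, since on $\phi(\tilde U) = B_\delta$ we have $\psi \circ \tilde f \circ \phi^{-1} = \tilde G = L = d(\psi \circ f \circ \phi^{-1})(\mathbf{0})$. I expect no genuine obstacle here: the whole argument is quantitative and parallels the classical proof of Franks' lemma. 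The only mild subtlety is tracking how $\phi$ and $\psi$ amplify the $C^1$-norm when translating between the chart picture and the manifold picture, but since this amplification is uniformly bounded on any fixed compact set, it suffices to shrink $\delta$ at the end.
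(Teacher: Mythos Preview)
Your argument is correct and is exactly the standard bump-function interpolation that underlies the classical Franks' lemma. The paper itself omits the proof of this lemma, stating only that ``it is easily obtained from the proof of the Franks' lemma,'' so your write-up is in full agreement with the intended approach and in fact fills in what the paper leaves implicit.
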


The following lemma is a version of 
Gourmelon's Franks' lemma:
\begin{lemma}[Lemma 4.1 in \cite{GouF}]\label{llin-fix3}
Let $f \in \mathrm{Diff}^1(M)$ with $\dim M = m$.
Consider a periodic point  $x \in M$ and a coordinate neighborhood
$\phi : U \to \mathbb{R}^m$ of $x$ 
with $\phi(x) = \mathbf{0}$.
Then, for any $\varepsilon >0$ and any neighborhood $V \subset U$ of $x$, 
one can find a $C^1$-diffeomorphism $\tilde{f}$ that is
$\varepsilon$-$C^1$-close to $f$ 
and a small neighborhood $\tilde{V} \subset V$ such that 
$\tilde{f}(x) = f(x)$ for any $x \in M \setminus V$,
the restriction of $(\phi \circ\tilde{f}\circ\phi^{-1})$ to 
$\phi(\tilde{V})$ is equal to the linear map given by 
$d(\phi \circ f \circ\phi^{-1})(\mathbf{0})$
and $\tilde{f}$ locally preserves the invariant manifolds of $X$ outside $\tilde{V}$
(see Lemma \ref{lgou1} for definition).
\end{lemma}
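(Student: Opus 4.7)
The plan is to combine a cutoff-and-linearize construction, of the kind underlying Lemma \ref{llin-map}, with a choice of chart adapted to the hyperbolic splitting at $x$ so that the invariant manifolds of $x$ are preserved outside the support. After shrinking $V$ if necessary, I may assume that $V$ is disjoint from $f^i(x)$ for $1 \le i < \mathrm{per}(x)$, so that the whole perturbation lives in the single chart $\phi$ and periodicity of $x$ plays no role beyond keeping the continuation well defined. Writing $F := \phi \circ f \circ \phi^{-1}$, $L := dF(\mathbf{0})$, and $F(y) = Ly + R(y)$ with $R(\mathbf{0})=\mathbf{0}$ and $dR(\mathbf{0})=0$, the task is to replace $R$ by zero on a small ball around the origin without destroying the $C^1$-closeness or the invariant manifolds.

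For a first attempt, pick a smooth radial cutoff $\chi_r$ equal to $1$ on $B_{r/2}$ and vanishing outside $B_r$, with $\|d\chi_r\| \le 2/r$, and set $\tilde F := L + (1-\chi_r)R$. Then $\tilde F = L$ on $B_{r/2}$, $\tilde F = F$ outside $B_r$, and
\[
\|\tilde F - F\|_{C^1} \le \tfrac{2}{r}\sup_{B_r}|R| + \sup_{B_r}\|dR\|,
\]
which tends to $0$ as $r \to 0$ because $R(y) = o(|y|)$ and $dR(\mathbf{0}) = 0$. Pulling back through $\phi$ yields a $\tilde f$ that equals $f$ off $\phi^{-1}(B_r) \subset V$ and is linear in the chart on $\tilde V := \phi^{-1}(B_{r/2})$; this already gives the first three conclusions of the lemma.

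To arrange the preservation of the invariant manifolds of $x$ outside $\tilde V$, I would choose the chart so that $L$ is block-diagonal with respect to $T_xM = E^s \oplus E^u$, with axes $E^s = \{y^u = 0\}$ and $E^u = \{y^s = 0\}$. Since $F$ preserves its own local stable and unstable manifolds, the remainder splits as $R = (R^s, R^u)$ with $R^u(y^s,0) = 0$ and $R^s(0,y^u) = 0$. Replacing the radial cutoff by a product cutoff $\chi_r(y) = \chi_r^s(y^s)\,\chi_r^u(y^u)$ then forces $\tilde F$ to preserve each axis, which in the chart is precisely the local stable or unstable manifold of $x$; any point of $W^{s}_{\mathrm{loc}\setminus\tilde V}(x,f)$ therefore stays on the old local stable manifold under $\tilde f$, and symmetrically for $W^u$.

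The main obstacle is to verify that the $C^1$-smallness survives the switch to a product cutoff, since the dangerous cross terms have the shape $\chi_r^s(y^s)\,d\chi_r^u(y^u)\,R^s(y)$ and carry a factor $1/r$. These are controlled by the identity $R^s(0,y^u) = 0$ combined with $dR(\mathbf{0}) = 0$: the mean value theorem and continuity of $dR$ give $|R^s(y)| \le \eta(r)\,|y^s|$ on $B_r$ with $\eta(r) \to 0$, and since $|y^s| \le r$ on the support of $\chi_r^s$ the cross term is bounded by $2\eta(r)$, which vanishes with $r$; the symmetric bound handles $R^u$. This is exactly the bookkeeping used by Gourmelon to prove Lemma 4.1 in \cite{GouF}.
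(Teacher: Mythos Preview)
The paper does not prove this lemma; it is quoted as Lemma~4.1 of \cite{GouF}. So there is no proof in the paper to compare against, and I evaluate your argument on its own merits.

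The radial-cutoff construction in your first paragraph is correct and gives a $C^1$-small linearization; the issue is the manifold-preservation step. There you block-diagonalize $L$ by a linear change of chart so that $E^s=\{y^u=0\}$ and $E^u=\{y^s=0\}$, and then claim $R^u(y^s,0)=0$ and $R^s(0,y^u)=0$ because ``$F$ preserves its own local stable and unstable manifolds.'' This does not follow: the local stable manifold of $F$ is only \emph{tangent} to $\{y^u=0\}$ at the origin and is in general a nontrivial $C^1$ graph over it, so $F$ does not preserve the coordinate plane $\{y^u=0\}$ and $R^u(y^s,0)$ need not vanish. Your product-cutoff map $\tilde F$ then has no reason to keep $W^s_{\mathrm{loc}}(f)$ invariant, and the conclusion ``any point of $W^{s}_{\mathrm{loc}\setminus\tilde V}(x,f)$ therefore stays on the old local stable manifold under $\tilde f$'' is unjustified.

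What is missing is a further, \emph{nonlinear}, change of coordinates $\Psi$ (with $d\Psi(\mathbf{0})=\mathrm{Id}$) that straightens both $W^s_{\mathrm{loc}}$ and $W^u_{\mathrm{loc}}$ onto the coordinate planes; once that is done your product-cutoff argument and the cross-term estimate go through exactly as you wrote. But then a second difficulty appears: the map you produce equals $L$ in the straightened chart $\Psi\circ\phi$, and pulling back by the nonlinear $\Psi$ gives $\Psi^{-1}\circ L\circ\Psi$ in the original chart $\phi$, which is not linear. Since the lemma fixes $\phi$ in advance, your construction does not yet yield the conclusion as stated. (In the paper's application the chart is in fact chosen after perturbing---``taking appropriate coordinate neighborhood''---so your argument would suffice there; but it does not prove the lemma as written, and one has to go back to Gourmelon's argument for that.)
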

Let us begin the proof of Proposition \ref{laffcst}.

\begin{proof}[Proof of Proposition \ref{laffcst}]
Let $f$ be given as is in the hypothesis of Proposition \ref{laffcst}. 
In the following, we assume that $X$ is a fixed point. 
The general cases can be reduced 
to this case by considering the power of $f$.

First, by using Lemma \ref{llin-fix3} in a sufficiently small neighborhood
and taking appropriate coordinate neighborhood, 
we can take $f_1$ that has the following properties:
\begin{itemize}
\item There exists a (smooth) 
coordinate neighborhood $\phi : U \to \mathbb{R}^3$ around $X$ 
such that $\phi(U) = (-1,1)^3$ and $\phi(X)$ is the origin of $\mathbb{R}^3$.
\item  We put $F_1 :=  \phi \circ f_1 \circ \phi^{-1}$.
For  $(x, y, z) \in (-1,1) \times (-1 , 1) \times (-\mu^{-1}, \mu^{-1})$, 
$F_1 (x, y, z) =(\lambda x, \lambda y , \mu z )$,
where $0<  \lambda < 1 <\mu$. 
\item There exist two points $P, Q \in U$ with $\phi(P) =(0, 0, p)$ and 
$\phi(Q) =(0, q, 0)$, where $0 < p, q < 1$ such that following holds:
For some positive integer $N \geq 2$, $f_1^N(Q) =P$ and $f^{i}(P) \not\in U$ for $0 < i  < N$. 
\item $W^s(X)$ and $W^u(X)$ are tangent at $Q$, in particular, 
$T_{Q}W^u(X)$ is contained in $T_{Q}W^s(X)$.
\end{itemize}

Let us give a perturbation so that the point of tangency is on the strong stable manifold of $X$.
First, we take an interval $J :=[a, b]$ that is contained in $(\mu^{-2}p, \mu^{-1}p)$
such that $J$ and the set $(\lambda^{n}q)_{n \geq 0}$ have the empty intersection.
Let $\rho(t)$ be a $C^{\infty}$-function on $\mathbb{R}$ such that the following holds:
\begin{itemize}
\item $\rho(t) =1$ if $|t| <a$.
\item $\rho(t) =0$ if $|t| >b$.
\end{itemize}
We modify $F_1$ to $F_2$ as follows:
\[
F_2(x,y,z) := (1-R(X))F_1(X) + R(X)(\lambda x, \tilde{\lambda} y, \mu z),
\]
where $R(x,y,z) := \rho(x)\rho(y)\rho(z)$ and $\tilde{\lambda}$ is a real 
number satisfying $0<\tilde{\lambda}< \lambda < 1$ 
and sufficiently close to $\lambda$.
Let us define the map $f_2$ as follows:
$f_2(x) = f_1(x)$ when $x \not\in U$.
Otherwise $f_2(x) =  \phi \circ F_2 \circ \phi^{-1}(x)$.
If we take $\tilde{\lambda}$ sufficiently close to $\lambda$,
then $f_2$ is a diffeomorphism of $M$. 
Note that $P$ and $Q$ are still 
the points of tangency of the invariant manifolds of $X$,
$P$ is on the strong stable manifold of $X$ and 
$f_2$ converges to $f_1$ when $\tilde{\lambda} \to \lambda$
in the $C^1$-topology. 
We fix $\tilde{\lambda}$ that is very close to $\lambda$ 
and give more perturbations.

Throughout the proof, we often change the coordinate in the following way:
Given a real number $r>1$, we define $r\mathrm{Id} : \mathbb{R}^3 \to \mathbb{R}^3 $
to be the expansion by multiplying $r$, more presicely, we put $r\mathrm{Id}(x, y, z) :=(rx, ry, rz)$.
Then $r\mathrm{Id}\circ \phi$ gives another coordinate neighborhood of $X$.
We replace $\phi$ with $r\mathrm{Id}\circ \phi$ and call this chart 
{\it the renormalization of $\phi$}.
When we take a renormalization, we change $V$, $P$ and $Q$ so that ($DT$2), ($DT$3), ($DT$4) hold. 
More precisely, we replace $V$ with $r^{-1}\mathrm{Id}(V)$, 
$P$ with $f^{-n_P}(P)$ and $Q$ with $f^{-n_Q}(Q)$,
where $n_P$ is the smallest number that 
satisfies $f^{-n_P}(P) \in r^{-1}\mathrm{Id}(V)$ and
$n_Q$ is the smallest number that satisfies $f^{n_Q}(Q) \in r^{-1}\mathrm{Id}(V)$.

Let us see the effect of the renormalization on the differential of the return map.
Given a diffeomorphism $f$ and a coordinate chart $\phi$, we have 
the differential of the return map $dF^N(P)$. 
If we take the renormalization, the differential of the return map is given by
$L^{n_Q} dF^N(P) L^{n_P}$, where $L$ is a matrix given as follows:
\[
L :=
\begin{pmatrix}
\lambda             & 0                                    & 0 \\
0                        & \tilde{\lambda}             & 0 \\
0                        & 0                                    & \mu 
\end{pmatrix}.
\]
Note that from this calculation we can see a component of $dF^N$ 
is equal to zero if and only if the corresponding component of the 
the differential of the return of the renormalized diffeomorphism
is equal to zero.

Let us resume the proof. 
By taking appropriate renormalization, 
we can assume ($DT$2), ($DT$3), ($DT$4) hold for $f_2$. 
Let us consider the differential of the return map $F_2^N$.
Since $W^s(X)$ and $W^u(X)$ are tangent at $Q$, we can put
\[
dF_2^N(Q) =
\begin{pmatrix}
a      &  d      & g \\
b      &  e      & h \\
c      &   f      & 0
\end{pmatrix}.
\]
By applying Lemma \ref{Franks} at $P$, 
we can find a diffeomorphism $f_3$ arbitrarily close to $f_2$ such that 
the $c$, $f$, $g$ are not equal to zero and $(3, 3)$-component remains to be zero.
Note that the use of the Franks' lemma does not disturb the condition that $P, Q$
are contained in  $W^s(X) \cap W^u(X)$ (by letting the support of perturbation 
sufficiently small).
Let us take sufficiently large $l$ and consider the differential $dF_3^l \circ dF_3^{N}(Q)$.
This is equal to the following:
\[
\begin{pmatrix}
\lambda^{l}      & 0                                    & 0 \\
0                        & \tilde{\lambda}^{l}      & 0 \\
0                        & 0                                    & \mu^{l} 
\end{pmatrix}
\begin{pmatrix}
a      &  d      & g \\
b      &  e      & h \\
c      &   f      & 0
\end{pmatrix}
=
\begin{pmatrix}
\lambda^{l}a                  &  \lambda^{l}d                 & \lambda^{l}g             \\
\tilde{\lambda}^{l}b      &  \tilde{\lambda}^{l}e      & \tilde{\lambda}^{l}h \\
\mu^{l}c                        &  \mu^{l} f                        & 0
\end{pmatrix}.
\]
By applying Lemma \ref{Franks}, at $f_3^{l-1}(Q)$,
we perturb $f_3$ to $f_4$ to make the differential 
$dF_4(f^{l-1}(P))$ into the following form:
\[
\begin{pmatrix}
1                        & 0                                   & z_l \\
x_l                      & 1                                   &  y_l \\
0                        & 0                                   & 1
\end{pmatrix}
\begin{pmatrix}
\lambda             & 0                                   & 0 \\
0                        & \tilde{\lambda}            &  0 \\
0                        & 0                                   & \mu
\end{pmatrix},
\]
where 
\[ x_l := -(\tilde{\lambda}/\lambda)^l(h/g), \,\, y_l := -(\tilde{\lambda}/\mu)^l(ah/cg) -(\tilde{\lambda}/\mu)^l(b/c), \,\, z_l := -(\lambda/\mu)^l(a/c).\]
Then, by a direct calculation 
we have $(1, 1)$ $(2, 1)$, $(2, 3)$,  and $(3, 3)$-component 
of $dF^{N+l}_4(P)$ is equal to zero.
Note that the support of perturbation can be taken arbitrarily small, 
and the distance between $f_3$ and $f_4$ can be arbitrarily small 
if we take large $l$, since $x_l, y_l, z_l \to 0$ when $l \to \infty$.

Let us proceed our perturbation. By taking renormalization, we can put
\[
dF^N_4(P) := 
\begin{pmatrix}
0      &   b       & e \\
0      &   c       & 0 \\
a      &   d       & 0
\end{pmatrix},
\]
where $a$, $b$, etc. are not necessarily equal to the 
corresponding numbers appeared in $dF_3$.
Since $f_4$ is a diffeomorphism, we know $a, c, e \neq 0$.

Let us take a large integer $l$ and consider the differential $dF_4^N \circ dF_4^l (f_4^{-l}(P))$.
This matrix is given as follows:
\[
\begin{pmatrix}
0      &   b      & e \\
0      &   c      & 0 \\
a      &   d      & 0
\end{pmatrix}
\begin{pmatrix}
\lambda^{l}      & 0                                    & 0 \\
0                        & \tilde{\lambda}^{l}      & 0 \\
0                        & 0                                    & \mu^{l} 
\end{pmatrix}
=
\begin{pmatrix}
0                          &   b\tilde{\lambda}^{l}      & e \mu^{l} \\
0                          &   c\tilde{\lambda}^{l}      & 0               \\
a\lambda^{l}       &   d\tilde{\lambda}^{l}     & 0
\end{pmatrix}.
\]
Again by using Lemma \ref{Franks} at $f_4^{-l}(P)$, 
we perturb $f_4$ to $f_5$ to change the differential $dF_5(f_4^{-l}(P))$  into
\[
\begin{pmatrix}
\lambda             & 0                                   & 0 \\
0                        & \tilde{\lambda}            &  0 \\
0                        & 0                                   & \mu
\end{pmatrix}
\begin{pmatrix}
1                        & x_l                                   &   0\\
0                        & 1                                   &   0 \\
0                        & y_l                                  & 1
\end{pmatrix},
\]
where $x_l := -(\tilde{\lambda}/\lambda)^l(d/a)$ and $y_l := -(\tilde{\lambda}/\mu)^l(b/e)$.
Then a direct calculation shows the non-zero components of 
$df^{N+l}_5(f_5^{-l}(P))$ are only $(1, 3)$, $(2, 2)$, and $(3, 1)$.

Then, by taking renormalization, we can put
\[
dF^N_5(P) := 
\begin{pmatrix}
0      &   0      & c \\
0      &   b      & 0 \\
a      &   0      & 0
\end{pmatrix}.
\]

Now, using Lemma \ref{llin-map} at $F_5^N(P)$, 
we can construct $F_6$ such that $F_6^N$ is locally affine map
around $P$ such that its differential coincides with $dF_5^N(P)$ .

Let us review the properties of $f_6$.
 \begin{itemize}
\item There exists a coordinate chart $\phi : U \to \mathbb{R}^3$ around $X$ 
such that $\phi(U) = (-1,1)^3$ and $\phi(X)$ is the origin of $\mathbb{R}^3$.
\item  Let us put $F_6 :=  \phi \circ f_6 \circ \phi^{-1}$.
For  $(x, y, z) \in (-1,1) \times (-1 , 1) \times (-\mu^{-1}, \mu^{-1})$, 
$F_6 (x, y, z) =(\lambda x, \bar{\lambda} y , \mu z )$,
where $0< \tilde{\lambda} < \lambda < 1 <\mu$. 
\item There exist two points $P, Q \in U$ with $\phi(P) =(0, 0, p)$ and 
$\phi(Q) =(0, q, 0)$, where $0 < p, q < 1$ such that following holds:
For some positive integer $N \geq 2$, $f_6^N(Q) =P$ and $f_6^{i}(P) \not\in U$ for $0 < i  < N$. 
\item There exists a small neighborhood $W_0$ of $P$ with
$\phi(W_0) = [-\varepsilon,\varepsilon] \times [ -\varepsilon, \varepsilon]
\times [p- \varepsilon , p+ \varepsilon]$ and $F_6^N(W_0) \subset \phi(U)$
such that  for every $(x, y, z +p) \in W_0$, $F_t^N(x, y, z +p) = (az, by +q, cx)$, 
where $a, b$ and $c$ are non-zero real numbers.
\end{itemize}

Let $\rho_2(s)$ be a $C^{\infty}$-function on $\mathbb{R}$ 
satisfying the following properties:
$\rho_2(s) =\delta$ if $|s| < \varepsilon/3$ and
$\rho_2(s) =0$ if $|s| > 2\varepsilon/3$,
where $\delta$ is a positive real number.
Then define $R : V\to \mathbb{R}$ by $R(x, y, z) := \rho_2(x)\rho_2(y)\rho_2(z-p)$,
and construct a one-parameter family of the diffeomorphims $f_{7, t}$ as follows:
For $X \in W_0$,   $f_{7, t}(x, y, z) = F_6(x +tR(x, y, z), y, z)$.
Otherwise, $f_{7, t}(X) = f_6(X)$. 
Note that for $t$ sufficiently close to $0$, 
$f_{7, t}$ is a diffeomorphism of $M$.

Then, by taking $W_0$ sufficiently small if necessary, 
we can see that $(f_{7,t})_{|t| < \delta}$, $P$, $Q$, $\phi$, $W_0$
satisfy ($DT$1)--($DT$5) for small $\delta$ 
and $f_7$ can be taken arbitrarily close to $f$. 
Hence the proof is completed.
\end{proof}

\begin{remark}
The geometric idea behind the perturbatioins from $f_2$ to $f_5$
is simple. Let us see that.

Since $Q$ is the point of tangency, the differential of $df^{N}(P)$ sends 
the tangent vector $v \in T_P(\phi(V))$ in the 
direction of $z$-axis parallel to $xy$-plane.
By a small perturbation, we can assume $df^N(P)(v)$ is not parallel
to $y$-direction ($f_1$ to $f_2$) 

Let us consider the differential $dF_2^{N+l}(P)$. 
Take the two-dimensional plane passing $P$ and parallel to the 
$xz$-plane, and let us consider its image under $dF_2^{N+l}(P)$.
If $l$ is sufficiently large, under some generic assumption ($f_2$ to $f_3$), 
this image turns to be almost parallel to the $xz$-plane. 
Thus, by a small perturbation,
we can assume that the image is parallel to $xz$-plane ($f_3$ to $f_4$).

By a similar argument for $f^{-1}_4$, we see that $y$ direction 
turns to be almost invariant under long time transision.
Hence by giving small perturbation, we can find a diffeomorphisms 
that preserves the $y$-direction ($f_4$ to $f_5$).
\end{remark}

\subsection{Proof of Proposition \ref{laffbif}}
Finally, let us give the proof of Proposition \ref{laffbif}.

\begin{proof}[Proof of Proposition \ref{laffbif}]
Put $I_n := [-\varepsilon, \varepsilon] \times [q-\varepsilon, q+\varepsilon]
\times [(p-\varepsilon)/\mu^n ,(p+\varepsilon)/\mu^n]$ and $t_n = p/ \mu^n$.
For $n$ sufficiently large, we show that $f_{t_n}$ has a hyperbolic periodic point
$R_n \in I_n$ with period $n+N$ such that $f_{t_n}$ has a heterodimensional cycle
associated to $X$ and $R_n$.

Let us take a point $A \in I_n$ and put $A = (x,y,z)$.
Then $F_{t}^n(A) = (\lambda ^n x, \tilde{\lambda}^n y, \mu^n z)$.
Note that $F_{t}^n(A)$ belongs to $W_0$ if $n$ is sufficiently large.
In the following, we assume this condition holds.
By the definition of the return map, we can see 
$F^{n+N}(A) =(c(\mu^n z -p), b\tilde{\lambda}^n y +q, a\lambda ^n x +t )$.
Suppose that this is a periodic point of period $n+N$ for
$t =t_n$. Then the following equalities hold:
\[
c(\mu^n z -p) =x, \quad b\tilde{\lambda}^n y +q = y, \quad  a\lambda ^n x + p/\mu^n = z.
\]
By a direct calculation, we get
\[ x=0, \quad y=q/(1-b\tilde{\lambda}^n), \quad z=p/\mu^n. \]
We put $y_n :=q/(1-b\tilde{\lambda}^n)$ and $z_n := p/\mu^n$.
Note that if we take sufficienly large $n$, the point 
$R_n := (0, y_n, z_n)$ is in $I_n$, since $R_n \to Q$ when $n \to +\infty$. 
Thus $R_n$ is indeed a periodic point of period $n+N$ for $n$ sufficiently large. 

Let us check that $R_n$ is a hyperbolic periodic point of index $2$.
The derivative of $f_{t_n}^n$ at $R_n$ is given by the matrix below:
\[
\begin{pmatrix}
\lambda^n & 0                            & 0 \\
0                & \tilde{\lambda ^n}  & 0 \\
0                & 0                             & \mu^n 
\end{pmatrix}.
\]  
The derivative of $f_{t_n}^N$ at $f_{t_n}^n(R_n)$ is given as follows:
\[
\begin{pmatrix}
0                & 0                            & c \\
0                & b                            & 0 \\
a                & 0                            & 0 
\end{pmatrix}.
\]  
Hence the Jacobian at $R_n$ is equal to 

\[
\begin{pmatrix}
\lambda^n & 0                            & 0 \\
0                & \tilde{\lambda ^n}  & 0 \\
0                & 0                             & \mu^n 
\end{pmatrix}
\begin{pmatrix}
0                & 0                            & c \\
0                & b                            & 0 \\
a                & 0                            & 0 
\end{pmatrix}
=
\begin{pmatrix}
0                            & 0                                    & \lambda^nc \\
0                            & \tilde{\lambda ^n}b       & 0 \\
a \mu^n                & 0                                     & 0 
\end{pmatrix}.
\]  
Now, a direct calculation shows that the eigenvalues of this matrix are given by  $\tilde{\lambda ^n}b$
and $\pm \sqrt{ac\lambda^n \mu^n}$. The absolute value of
$\tilde{\lambda ^n}b$ is less than one when $n$ is sufficiently large,
and the absolute value of $\pm \sqrt{ac\lambda^n \mu^n}$ are 
greater than one when $n$ is sufficiently large,
since $|\tilde{\lambda}|<1$ and $|\mu\lambda|>1$
(the second inequality is the consequence of the fact that 
 $X$ is volume expanding).
 
 \begin{figure}[t]
\begin{center}
  \includegraphics[width=4in]{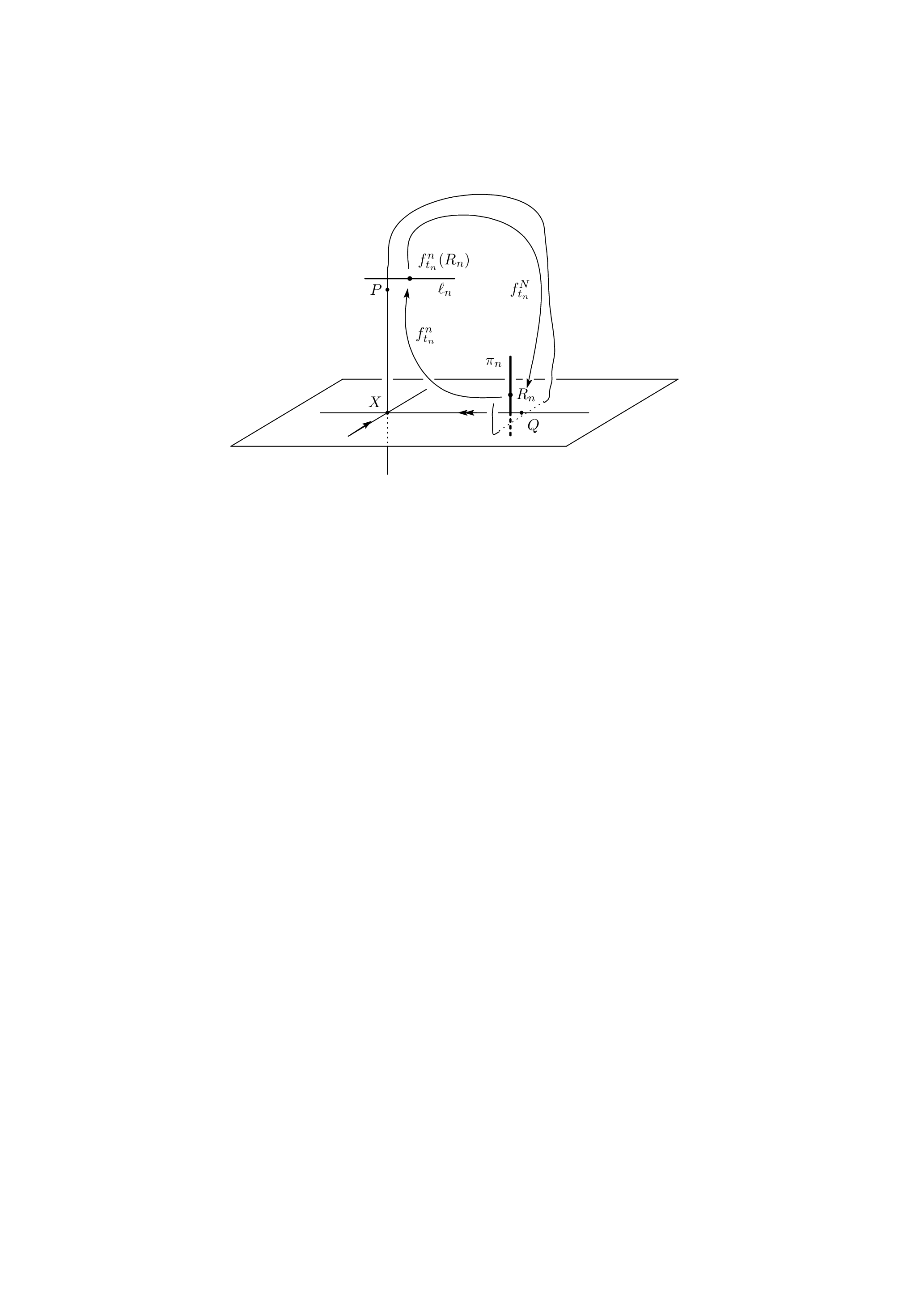}\\
 \caption{Creation of heterodimensional cycle.}
 \label{fig:DTbif}
  \end{center}
\end{figure}

Let us check that $f_{t_n}$ has a heterodimensional cycle
associated to $X$ and $R_n$ (see figure \ref{fig:DTbif}).
First, we show $W^u(X)$ and $W^s(R_n)$ have non-empty intersection.
It is easy to see that $W^u(X)$ contains $z$-axis in $\phi(U)$. 
Hence we only need to check that $W^s(R_n)$ has an intersection with $z$-axis.
To see this, we focus on the segment
\[
\ell_n := \{ (0 , \tilde{\lambda}^ny_n +s , \mu ^n z_n) \mid |s| \leq 2\tilde{\lambda}^n  y_n \}.
\]
The segment $\ell_n$ passes $f^n_{t_n}(R_n)$ and $z$-axis. 
We can see that $\ell_n$ is contained in $W_0$ if $n$ is sufficiently large,
since $f_{t_n}^{n}(R_n) \to P$ and the length of $\ell_n$ converges to zero
when $n \to +\infty$.
The image of this segment under $f_{t_n}^N$ is given as follows:
\[
f_{t_n}^N (\ell_n) = \{ ( 0, y_n +s, z_n ) \mid  |s| \leq 2|b| \tilde{\lambda}^n  y_n  \}.
\]
So the image of $\ell_n$ under $f_{t_n}^{n+k}$ is given as follows:
\[
f_{t_n}^{n+N}(\ell_n) = \{ (0 , \tilde{\lambda}^ny_n +s , \mu ^n z_n) \mid |s| \leq (2|b|\tilde{\lambda}^n)\tilde{\lambda}^n  y_n \}.
\]
Thus the restriction of $f_{t_n}^{n+N}$ to $\ell_n$  
is well defined and $\ell_n$ is uniformly contracted with the factor
$|b|\tilde{\lambda}^n$. 
This shows that for sufficiently large $n$, $\ell_n$ is contained in the stable manifold of $R_n$.

Second, let us check that the two invariant manifolds $W^s(X)$ and $W^u(R_n)$
have non-empty intersection.
As is in the previous case, we only need to 
check that $W^u(R_n)$ has an intersection with $xy$-plane in $U$.
To see this, we focus on the segment
\[
\pi_n := \{ (0,  y_n, z_n +s) \mid  |s| \leq 2 z_n \}.
\]
This is a segment that passes $R_n$ and have non-empty intersection with $W^s(X)$.
We show that  for sufficiently large $n$, $\pi_n$ is contained in $W^u(R_n)$.
To see that, let us calculate the inverse image of $\pi_n$ under $f_{t_n}^{-2(n+N)}$.

The inverse image of $\pi_n$ under $f_{t_n}^{-N}$  is given as follows:
\[
f_{t_n}^{-N}(\pi_n) = 
\{ (  s,  \tilde{\lambda}^ny_n,  \mu ^n z_n ) \mid  |s| \leq 2z_n/c\}.
\]
This set is contained in $W_0$ if $n$ is sufficiently large.
The inverse image of $f_{t_n}^{-N}(\pi_n)$ under $f^{-n}$  is given as follows:
\[
f_{t_n}^{-n-N}(\pi_n) = \{ (  s,  y_n, z_n ) \mid  |s| \leq 2z_n/(|c|\lambda^n)\}.
\]
Since $2z_n/(|c|\lambda^n) = 2p/(|c|\mu^n \lambda ^n)$ and $\mu\lambda >1$,
this segment is also contained $W_1$ when $n$ is sufficiently large.

The inverse image of $f_{t_n}^{-n-N}(\pi_n)$ under $f^{-N}$  is given as follows:
\[
f_{t_n}^{-n-2N}(\pi_n) = \{ ( 0,  \tilde{\lambda}^ny_n,  \mu ^n z_n +s ) \mid  |s| \leq 2z_n/(|ac|\lambda^n)\}.
\]
If $n$ is sufficiently large, this segment is also contained in $W_0$.

Finally, the inverse image of $f_{t_n}^{-n-2N}(\pi_n)$ under $f^{-n}$  is given as follows.
\[
f_{t_n}^{-2n-2N}(\pi_n) = \{ ( 0,  y_n,  z_n +s ) \mid  |s| \leq 2z_n/(|ac|\mu ^n\lambda^n)\}.
\]

These calculations show that $f_{t_n}^{-2n-2N}$ uniformly contracts $\pi_n$ by the factor $(|ac|\mu ^n\lambda^n)^{-1}$,
and its absolute value is less than $1$ if $n$ is sufficiently large.
Hence we know $\pi_n$ belongs to the unstable manifold of $R_n$. 
Thus the proof is completed.
\end{proof}

\section*{Acknowledgments} The author is very thankful for 
the good patience and the warm encourgement of 
his supervisor Shuhei Hayashi during preparing the manuscript.

\end{document}